\newtheorem{Theorem}{Theorem}[section]
\newtheorem{Lemma}[Theorem]{Lemma}
\newtheorem{Proposition}[Theorem]{Proposition}
\theoremstyle{definition}
\newtheorem{Definition}[Theorem]{Definition}
\theoremstyle{remark}
\newtheorem{Remark}[Theorem]{Remark} 
\numberwithin{equation}{section}
\newcommand{\R}{\mathbb R}
\newcommand{\C}{\mathbb C^{\prime}}
\newcommand{\D}{\mathbb D}
\newcommand{\Q}{\mathbb Q}
\newcommand{\Min}{\mathbb L^3}
\newcommand{\SLR}{\mathrm{SL}_2 \mathbb R}
\newcommand{\LSLR}{\Lambda {\rm SL}_{2} \R_{\sigma}}
\newcommand{\LSLRP}{\Lambda^+ {\rm SL}_{2} \R_{\sigma}}
\newcommand{\LSLRN}{\Lambda^- {\rm SL}_{2} \R_{\sigma}}
\newcommand{\ISU}{{\rm SU}_{1, 1}^{\prime}}
\newcommand{\LISU}{\Lambda^{\prime} {\rm SU}_{1, 1 \sigma}^{\prime}}
\newcommand{\SLC}{{\rm SL}_2 \mathbb C^{\prime}}
\newcommand{\LSLC}{\Lambda^{\prime} {\rm SL}_{2} \C_{\sigma}}
\newcommand{\LSLCP}{\Lambda^{\prime +} {\rm SL}_{2} \C_{\sigma}}
\newcommand{\LSLCN}{\Lambda^{\prime -} {\rm SL}_{2} \C_{\sigma}}
\newcommand{\LSLCPN}{\Lambda^{\prime +}_* {\rm SL}_{2} \C_{\sigma}}
\newcommand{\LSLCNN}{\Lambda^{\prime -}_* {\rm SL}_{2} \C_{\sigma}}
\newcommand{\Uone}{{\rm U}_1^{\prime}}
\newcommand{\id}{\operatorname{id}}
\newcommand{\isu}{\mathfrak{su}_{1, 1}^{\prime}}
\newcommand{\lisu}{\Lambda^{\prime} \mathfrak{su}^{\prime}_{1, 1 \sigma}}
\newcommand{\slc}{\mathfrak{sl}_2 \mathbb C^{\prime}}
\newcommand{\lslc}{\Lambda^{\prime} \mathfrak{sl}_{2} \mathbb C^{\prime}_{\sigma}}
\newcommand{\lslcn}{\Lambda^{\prime-} \mathfrak{sl}_{2} \mathbb C^{\prime}_{\sigma}}
\newcommand{\lslcp}{\Lambda^{\prime+} \mathfrak{sl}_{2} \mathbb C^{\prime}_{\sigma}}
\newcommand{\slR}{\mathfrak{sl}_2 \mathbb R}
\newcommand{\lslR}{\Lambda \mathfrak{sl}_2 \mathbb R_{\sigma}}
\newcommand{\ad}{\operatorname{Ad}}
\newcommand{\di}{\operatorname{diag}}
\newcommand{\Nil}{{\rm Nil}_3}
\renewcommand{\Re}{\operatorname {Re}}
\renewcommand{\Im}{\operatorname {Im}}
\renewcommand{\l}{\lambda}
\renewcommand{\S}{\mathbb S}
\newcommand{\ip}{i^{\prime}}
\begin{document}
\title{Timelike minimal surfaces in the three-dimensional Heisenberg group}
 \author[H.~Kiyohara]{Hirotaka Kiyohara}
 \address{Department of Mathematics, Hokkaido University, 
 Sapporo, 060-0810, Japan}
 \email{kiyosannu@eis.hokudai.ac.jp}
\thanks{The first named author is supported by JST SPRING, Grant Number JPMJSP2119.}

 \author[S.-P.~Kobayashi]{Shimpei Kobayashi}
 \address{Department of Mathematics, Hokkaido University, 
 Sapporo, 060-0810, Japan}
 \email{shimpei@math.sci.hokudai.ac.jp}
 \thanks{The second named author is partially supported by Kakenhi 18K03265.}
 \subjclass[2020]{Primary~53A10, 58E20, Secondary~53C42}
 \keywords{Minimal surfaces; Heisenberg group; timelike surfaces; loop groups;
  the generalized Weierstrass type representation}
 \date{\today}
\pagestyle{plain}
\begin{abstract}
 Timelike surfaces in the three-dimensional Heisenberg group with left invariant 
 semi-Riemannian metric are studied. In particular, non-vertical timelike minimal 
 surfaces are characterized by the non-conformal Lorentz harmonic maps into 
 the de Sitter two sphere. On the basis of the characterization, 
 the generalized Weierstrass type representation 
 will be established through the loop group decompositions.
\end{abstract}
\maketitle    
\section{Introduction}
 Constant mean curvature surfaces in three-dimensional homogeneous spaces, 
 specifically   Thurston's eight model spaces \cite{Thurston},
 have been intensively studied in recent years.
 One of the reasons is a seminal paper by Abresch-Rosenberg \cite{Abresch-Rosenberg:Acta},
 where they introduced a quadratic differential, the so-called the \textit{Abresch-Rosenberg differential}, analogous to the Hopf differential for surfaces in the space forms 
 and showed that it was holomorphic for a constant 
 mean curvature surface in various classes of three-dimensional homogeneous spaces,
 such as the Heisenberg group $\Nil$, the product spaces $\S^2 \times \R$
 and $\mathbb H^2 \times \R$ etc, see \cite{Abresch-Rosenberg} in detail.
 It is evident that holomorphic quadratic differentials are fundamental 
 for study of global geometry of surfaces, \cite{Fer-Mira2}.
 On the one hand, Berdinsky-Taimanov developed integral representations  of surfaces in three-dimensional homogeneous spaces
 by using the generating spinors and the nonlinear Dirac type equations, 
 \cite{Ber:Heisenberg, BT:Sur-Lie}. They were natural generalizations of 
 the classical Kenmotsu-Weierstrass representation 
 for surfaces in the Euclidean three-space.

 Combining the Abresch-Rosenberg differential 
 and the nonlinear Dirac equation with generating spinors, in  \cite{DIKAsian, DIKtop}, 
 Dorfmeister, Inoguchi and the second named author of this paper have 
 established the loop group method for  minimal surfaces in $\Nil$, where the following
 left-invariant Riemannian metric has been 
 considered on $\Nil$: 
\[
 ds^2 = dx_1^2 + dx_2^2 +  \left(dx_3 + \frac12 (x_2 dx_1 - x_1 dx_2)\right)^2,
\]
 In particular, all non-vertical minimal surfaces in $\Nil$ have been constructed from 
 holomorphic data, which have been called the \textit{holomorphic potentials}, 
 through the loop group decomposition, the so-called Iwasawa decomposition,
 and the construction  has been commonly called the \textit{generalized Weierstrass type representation}.
 In this loop group method, the Lie group structure of $\Nil$ and 
 harmonicity of the left-translated normal Gauss map 
 of a non-vertical surface, which obviously took values in a hemisphere in 
 the Lie algebra of $\Nil$, 
 were essential tools.
 To be more precise, a surface in $\Nil$ is minimal if and only if 
 the left-translated normal Gauss map is a non-conformal harmonic map 
 with respect to the \textit{hyperbolic metric} on the hemisphere, that is, 
 one considers the hemisphere as the hyperbolic two space not the two sphere
 with standard metric.
 Since the hyperbolic two space is one of the standard symmetric spaces and the 
 loop group method of 
 harmonic maps from a Riemann surface into a symmetric space have been 
 developed very well \cite{DPW}, thus we have obtained 
 the generalized Weierstrass type  representation.

 On the one had, it is easy to see 
 that the three-dimensional Heisenberg group $\Nil$ can have the following 
 left-invariant \textit{semi-Riemannian} metrics:
\[
 ds^2_{\pm} = \pm dx_1^2 + dx_2^2 \mp  \left(dx_3 + \frac12 (x_2 dx_1 - x_1 dx_2)\right)^2.
\]
 Moreover in \cite{Rah}, it has been shown that 
 the left-invariant semi-Riemannian metrics on $\Nil$
 with $4$-dimensional isometry group only are
 the metrics $ds^2_{\mp}$.
 Therefore a natural problem is study of spacelike/timelike,   
 minimal/maximal surfaces in $\Nil$ with the above semi-Riemannian 
 metrics in terms of the generalized Weierstrass type representations.
%

 In this paper we will consider 
 timelike surfaces in $\Nil$ with the semi-Riemannian metric $ds^2_{-}$.
 For defining the Abresch-Rosenberg differential and the nonlinear Dirac equations 
 with generating spinors, the \textit{para-complex structure} on a timelike 
 surface is essential, and we will systematically develop theory 
 of timelike surfaces using the para-complex structure, the Abresch-Rosenberg 
 differential and the nonlinear Dirac equations with generating spinors in 
 Section \ref{sc:timelikesurf}.
 Then the first of the main results in this paper is Theorem \ref{thm:main}, where 
 non-vertical 
 timelike minimal surfaces in $\Nil$ will be characterized in terms of harmonicity 
 of the left-translated normal Gauss map.
 To be more precise, the left-translated normal Gauss map of a timelike surface 
 takes values in 
 the lower half part of the de Sitter two sphere $\widetilde{\mathbb S}^2_{1-} 
 =\{(x_1, x_2, x_3) \in \mathfrak{nil}_3 = \Min \mid 
  - x_1^2 + x_2^2 + x_3^2=1, x_3< 0\}$, but it is not a Lorentz harmonic map into 
 $\widetilde{\mathbb S}^2_{1-}$ with respect to the standard metric on the de Sitter sphere. It will be shown that  by combining two stereographic projections, the left-translated normal Gauss map can take values in  
  the upper half part of the de Sitter 
 two sphere with interchanging $x_1$ and $x_2$, that is, $\S^2_{1+} = \{(x_1, x_2, x_3) \in \Min \mid 
 x_1^2 -x_2^2 + x_3^2=1, x_3>0\}$, see Figure \ref{fig:desitter},
 and it is
 a non-conformal Lorentz harmonic into $\S^2_{1+}$ if and only if the timelike surface is minimal, see Section \ref{gaussmap}  in details.
Note that timelike minimal surfaces in $(\Nil, ds^2_-)$
 have been studied through the Weierstrass-Enneper type representation 
 and the Bj\"oring problem in \cite{IKOS, LMM, CDM, CMO:Bjorling, SSP, CMO:Minimal,  CO, Magid, Kondreak}.

 It has been known that timelike constant mean curvature surfaces in 
 the three-dimensional Minkowski space $\Min$ could be characterized by 
 a Lorentz harmonic map into the de Sitter two space, \cite{Inoguchi, IT, DIT, BS:CMC, 
 BS:Cauchy}. In fact
 the Lorentz harmonicity of the unit normal of a timelike surface in $\Min$
 is equivalent to constancy of the mean curvature. 
 Furthermore, the generalized Weierstrass type representation for 
 timelike non-zero constant mean curvature surfaces has been established in 
 \cite{DIT}. In Theorem \ref{thm:Sym1}, we will show 
 that two maps, which are given by the logarithmic derivative of 
 one parameter family of moving frames of a non-conformal Lorentz 
 harmonic map (the so-called \textit{extended frame}) 
 into $\S^2_{1+}$ with respect to an additional parameter 
 (the so-called \textit{spectral parameter}), 
 define a timelike non-zero constant mean curvature 
 surface in $\Min$ and a non-vertical timelike minimal surface in $\Nil$,
 respectively. 

 From the view point of the loop group 
 construction of Lorentz harmonic maps, the construction in \cite{DIT} 
 is sufficient, however, 
 it is not enough for our study of timelike minimal surfaces in $(\Nil, ds_-^2)$.
 As we have mentioned above, for defining 
 the Abresch-Rosenberg differential and the nonlinear Dirac equation 
 with generating spinors the para-complex structure is essential. 
 Note that the para-complex structure has been used for study of timelike surface
 \cite{W:Lorentz, L:Lorentz}.
 We can then show that 
 the Abresch-Rosenberg differential is para-holomorphic 
 if a timelike surface has constant mean curvature, 
 Theorem \ref{thm:constant}, which is analogous to the fundamental result 
 of Abresch-Rosenberg.

 As a by-product of utilizing the para-complex structure, it is easy to 
 compare our construction with minimal surface in $(\Nil, ds^2)$, where 
 the complex structure has been used, and moreover, 
 the generalized Weierstrass type representation can be understood in 
 a unified way, that is, the Weierstrass data is just a
 $2$ by $2$ matrix-valued para-holomorphic function 
 and a loop group decomposition of 
 the solution of a para-holomorphic differential equation gives the 
 extended frame of a non-conformal Lorentz harmonic map in $\S^2_{1 +}$, 
 Theorem \ref{thm:Weierstrass}. One of 
 difficulties is that one needs to have appropriate loop 
 group decompositions in the para-complex setting, that is, 
 Birkhoff and Iwasawa decompositions.
 In Theorem \ref{thm:BIdecomposition}, by identifying 
 the double loop groups of $\SLR$, that is $\LSLR \times \LSLR$ and
 the loop group of $\SLC$, 
 that is, $\LSLC$ (where $\C$ denotes the para-complex number) by a natural isomorphism,
 we will obtain such decompositions. 
 Finally in Section \ref{sc:Example}, several examples will be shown by our loop group construction. 
 In particular \textit{B-scroll type minimal surfaces} in $\Nil$ will be established 
 in Section \ref{sbsc:Bscroll}.
 In Appendix \ref{timelikemin}, we will
 discuss timelike constant mean curvature surfaces in $\Min$, and 
 in Appendix \ref{sc:nopara}, we will see 
 the correspondence between our construction and the construction without 
 the para-complex structure in \cite{DIT}.

\section{Timelike surfaces in $\Nil$}\label{sc:timelikesurf}
 In this section we will consider timelike surfaces in $\Nil$.
 In particular we will use the para-complex structure and 
 the nonlinear Dirac equation for timelike surfaces. Finally the Lax pair type 
 system for timelike surface will be shown.
\subsection{$\Nil$ with indefinite metrics}
 The Heisenberg group is a $3$-dimensional Lie group
\[
 \Nil(\tau) = (\R^3 (x_1,x_2,x_3), \cdot)
\]
 for $\tau \neq 0$ with the multiplication
\[
 (x_1,x_2,x_3) \cdot (y_1,y_2,y_3)
 =
 (x_1+y_1,x_2+y_2,x_3+y_3+\tau(x_1y_2-y_1x_2)).
\]
 The unit element of $\Nil(\tau)$ is $(0,0,0)$.
 The inverse element of $(x_1,x_2,x_3)$ is $(-x_1,-x_2,-x_3)$.
 The groups $\Nil(\tau)$ and $\Nil(\tau')$ are isomorphic if $\tau \tau' \neq 0$.
 The Lie algebra $\mathfrak{nil}_3$ of $\Nil(\tau)$ is $\R^3$ with the relations:
\[
 [e_1,e_2] = 2\tau e_3, \quad [e_2,e_3] = [e_3,e_1] =0
\]
with respect to the normal basis $e_1=(1,0,0),e_2=(0,1,0),e_3=(0,0,1)$.
 In this paper we consider the left invariant indefinite metric $ds_-^2$ for
 $\Nil$ as follows:
\begin{equation}\label{eq:indefinitemetric}
 ds_-^2  = -(dx_1)^2 + (dx_2)^2 + \omega_{\tau} \otimes \omega_{\tau},
\end{equation}
 where $\omega_{\tau} = dx_3 + \tau (x_2 dx_1 - x_1 dx_2)$.
 Moreover, we fix the real parameter $\tau$ as $\tau =1/2$
 for simplicity.
 The vector fields $E_k \, (k=1,2,3)$ defined by
 \[
 E_1 = \partial_{x_1} - \frac{x_2}2 \partial_{x_3},\quad
 E_2 = \partial_{x_2} + \frac{x_1}2 \partial_{x_3}\quad
 {\rm and}\quad
 E_3 = \partial_{x_3}
 \]
 are left invariant corresponding to $e_1,e_2,e_3$ and orthonormal to each other
 with the timelike vector $E_1$ with respect to the metric $ds_-^2$.
 The Levi-Civita connection $\nabla$ of $ds_-^2$ is given by
\[
\begin{matrix}\vspace{2pt}
\nabla_{E_1} E_1 = 0,              & \nabla_{E_1} E_2 = \frac12E_3,  &\nabla_{E_1} E_3 = -\frac12E_2,\\
\vspace{2pt}
\nabla_{E_2} E_1 = -\frac12E_3,& \nabla_{E_2} E_2 = 0,              &\nabla_{E_2} E_3 = -\frac12E_1,\\
\nabla_{E_3} E_1 = -\frac12E_2,& \nabla_{E_3} E_2 = -\frac12E_1,&\nabla_{E_3} E_3 = 0    .\\
\end{matrix}
\]
\subsection{Para-complex structure}
 Let $\C$ be a real algebra spanned by $1$ and $\ip$ with following multiplication:
\[ (\ip)^2=1, \quad 1\cdot \ip = \ip \cdot 1= \ip. \] 
 An element of the algebra $\C = \R 1\oplus \R \ip$ is called a 
 \textit{para-complex number}. 
 For a para-complex number $z$ we can uniquely express $z=x+y\ip$ with some $x, y \in \R$.
 Similar to complex numbers, the real part $\Re z$,
 the imaginary part $\Im z$
 and the conjugate $\bar z$ of $z$ are defined by
\[ \Re z = x, \quad \Im z = y \quad {\rm and} \quad
 \bar z = x - y\ip.
\]
 For a para-complex number $z=x+y\ip \in \C$
 there exists a para-complex number $w \in \C$ with $z^{1/2} =w$
 if and only if
\begin{equation}\label{eq:root}
  x+y \geq 0 \quad {\rm and} \quad x-y \geq 0. 
\end{equation} 
 In particular ${\ip} ^{1/2}$ does not exist.
 Moreover, for a para-complex number $z = x+ y \ip \in \C$,
 there exists a para-complex number $w \in \C$
 such that $z = e^w$ if and only if
\begin{equation}\label{eq:exp}
  x+y > 0 \quad {\rm and} \quad x-y > 0. 
\end{equation} 
 
 Let $M$ be an orientable connected 2-manifold,
 $G$ a Lorentzian manifold
 and $f:M \to G$ a timelike immersion, that is, the induced metric on $M$ is Lorentzian.
 The induced Lorentzian metric defines a Lorentz conformal structure on $M$:
 for a timelike surface
 there exists a local para-complex coordinate system $z= x+y\ip$
 such that the induced metric $I$ is given by
 $I = e^u dz d\bar z = e^u \left((dx)^2-(dy)^2\right)$.
 Then we can regard $M$ and $f$
 as a Lorentz surface and a conformal immersion, respectively.
 The coordinate system $z$ is called the \textit{conformal coordinate system}
 and the function $e^u$ the conformal factor of the metric with respect to $z$.
 For a para-complex coordinate system $z= x+y\ip$, the partial differentiations
 are defined by
 \[
 \partial_z = \frac12(\partial_x + \ip \partial_y) \quad {\rm and} \quad
 \partial_{\bar z} = \frac12(\partial_x - \ip \partial_y).
\]

\subsection{Structure equations}
 Let $f:M \to \Nil$ be a conformal immersion from a Lorentz surface $M$ into $\Nil$.
 Let us denote the inverse element of $f$ by $f^{-1}$.
 Then the $1$-form $\alpha = f^{-1}df$ satisfies the Maurer-Cartan equation:
\begin{equation}\label{eq:MC}
 d\alpha + \frac12 [\alpha\wedge\alpha] = 0.
\end{equation}
 For a conformal coordinate $z = x + y\ip$
 defined on a simply connected domain $\D \subset M$,
 set $\Phi$ as
\[ \Phi = f^{-1}f_z. \]
 The function $\Phi$ takes values
 in the para-complexification $\mathfrak{nil}_3^{\C}$ of $\mathfrak{nil}_3$.
 Then $\alpha$ is expressed as
\[ \alpha = \Phi dz + \overline{\Phi} d\bar z \]
 and the Maurer-Cartan equation \eqref{eq:MC} as
\begin{equation}\label{eq:MC2}
 \Phi_{\bar z} - \overline{\Phi}_z + [\overline{\Phi}, \Phi] =0.
\end{equation} 
 Denote the para-complex extension of 
 $ds_-^2 =g = \sum_{i, j}g_{ij} dx_i dx_j $ to $\mathfrak{nil}_3^{\C}$
 by the same letter.
 Then the conformality of $f$ is equivalent to
\[
 g(\Phi, \Phi) =0, \quad
 g(\Phi, \overline{\Phi}) >0.
\]
 For the orthonormal basis $\{ e_1, e_2, e_3\}$ of $\mathfrak{nil}_3$
 we can expand $\Phi$ as $\Phi = \phi_1 e_1 + \phi_2 e_2 + \phi_3 e_3$.
 Then the conformality of $f$ can be represented as
\begin{equation}\label{eq:condition}
  -(\phi_1)^2 +(\phi_2)^2 +(\phi_3)^2 =0, \quad
  -\phi_1 \overline{\phi_1} +\phi_2 \overline{\phi_2} +\phi_3 \overline{\phi_3}  =\frac12 e^u,
\end{equation}
 for some function $u$.
 The conformal factor is given by $e^u$.
 Conversely, for a $\mathfrak{nil}_3^{\C}$-valued function $\Phi = \sum_{k=1}^3 \phi_k e_k$
 on a simply connected domain $\D \subset M$
 satisfying \eqref{eq:MC2} and \eqref{eq:condition},
 there exists an unique conformal immersion $f:\D \to \Nil$
 with the conformal factor $e^u$
 satisfying $f^{-1} df = \Phi dz + \overline{\Phi} d\bar z$
 for any initial condition in $\Nil$ given at some base point in $\D$.

 Next we consider the equation for a timelike surface $f$ with constant mean curvature 0.
 For $f$ denote the unit normal vector field by $N$
 and the mean curvature by $H$.
 The tension field $\tau (f)$ for $f$ is given by
 $\tau (f)$ = tr$(\nabla df)$
 where $\nabla df$ is the second fundamental form for $(f,N)$.
 As well known the tension field of $f$ is related to the mean curvature and the unit normal by
\begin{equation}\label{eq:tension}
 \tau(f) = 2HN.
\end{equation}
 By left translating to $(0,0,0)$, we can see this equation rephrased as
\begin{equation}\label{eq:streq}
 \Phi _{\bar z}
 +\overline{\Phi} _z
 + \left\{ \Phi , \overline{\Phi}\right\}
 =
 e^u Hf^{-1} N
\end{equation}
 where $\{\cdot,\cdot\}$ is the bilinear symmetric map defined by 
\[
 \left\{ X , Y\right\}
 =
 \nabla_ {X} Y 
 + \nabla_ {Y} X
\]
 for $X,Y \in \mathfrak{nil}_3$.
 In particular for a surface with the mean curvature 0,
 we have
\begin{equation}\label{eq:minimalcondition}
  \Phi_{\bar z}
 + \overline{\Phi} _z
 + \left\{ \Phi , \overline{\Phi}\right\}=0.
\end{equation}
 
 Conversely, for a $\mathfrak{nil}_3$-valued function $\Phi = \sum_{k=1}^3 \phi_k e_k$
 satisfying \eqref{eq:MC2}, \eqref{eq:condition} and \eqref{eq:minimalcondition}
 on a simply connected domain $\D$,
 there exists a conformal timelike surface $f:\D \to \Nil$
 with the mean curvature $0$
 and the conformal factor $e^u$
 satisfying $f^{-1} df = \Phi dz + \overline{\Phi} d\bar z $
 for any initial condition in $\Nil$ given at some base point in $\D$.
\subsection{Nonlinear Dirac equation for timelike surfaces}\label{sec:deracequation}
 Let us consider the conformality condition of an immersion $f$.
 We first prove the following lemma:
\begin{Lemma}\label{lem:squreroot}
 If a product $xy $ of two para-complex numbers $x, y \in \C$ has
 the square root, then there exists $\epsilon \in \{\pm 1, \pm \ip\}$
 such that $\epsilon x$ and $\epsilon y$ have the square roots.
\end{Lemma}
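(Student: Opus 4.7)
The natural approach is to diagonalize the algebra $\C$ by means of the idempotent basis $e_{\pm} = (1 \pm \ip)/2$, which satisfies $e_+^2 = e_+$, $e_-^2 = e_-$ and $e_+ e_- = 0$. A para-complex number $z = a + b\ip$ decomposes as $z = (a+b)\, e_+ + (a-b)\, e_-$, and the square root condition \eqref{eq:root} then translates into the simple statement that both coefficients in the $e_{\pm}$-basis are non-negative. Because multiplication is componentwise in this basis, if we write $x = \alpha\, e_+ + \beta\, e_-$ and $y = \gamma\, e_+ + \delta\, e_-$ with $\alpha, \beta, \gamma, \delta \in \R$, then $xy = \alpha\gamma\, e_+ + \beta\delta\, e_-$, so the hypothesis that $xy$ admits a square root reads precisely $\alpha\gamma \geq 0$ and $\beta\delta \geq 0$.

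The next observation I would make is that the four candidate units $\{\pm 1, \pm \ip\}$ correspond in the idempotent basis to the four independent sign patterns: $1 = e_+ + e_-$, $-1 = -e_+ - e_-$, $\ip = e_+ - e_-$ and $-\ip = -e_+ + e_-$. Consequently, multiplication by some $\epsilon \in \{\pm 1, \pm \ip\}$ is the same as the choice of two independent signs $\sigma_+, \sigma_- \in \{\pm 1\}$, acting on the coordinates by $(\alpha, \beta) \mapsto (\sigma_+ \alpha, \sigma_- \beta)$ and $(\gamma, \delta) \mapsto (\sigma_+ \gamma, \sigma_- \delta)$. This is the only mildly non-obvious step in the proof: matching the four para-complex units with the four sign flips in the diagonalized algebra.

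From there the result is immediate. Since $\alpha\gamma \geq 0$, the pair $(\alpha, \gamma)$ does not contain strictly opposite signs, so a single choice of $\sigma_+$ makes $\sigma_+ \alpha \geq 0$ and $\sigma_+ \gamma \geq 0$ simultaneously; analogously $\beta\delta \geq 0$ allows a choice of $\sigma_-$ with $\sigma_- \beta \geq 0$ and $\sigma_- \delta \geq 0$. The corresponding $\epsilon \in \{\pm 1, \pm \ip\}$ then makes both $\epsilon x$ and $\epsilon y$ have non-negative coordinates in the $e_{\pm}$-basis, which by \eqref{eq:root} means that both admit square roots. There is no real obstacle beyond this bookkeeping, and the zero cases (when some coordinate vanishes) are automatically handled because then any sign choice is admissible for that coordinate.
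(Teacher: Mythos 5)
Your proof is correct and is essentially the paper's argument in different notation: the identity $\Re(xy)\pm\Im(xy) = (\Re x\pm\Im x)(\Re y\pm\Im y)$ that the paper invokes as a ``simple computation'' is exactly your statement that multiplication is componentwise in the idempotent basis $e_{\pm}=(1\pm\ip)/2$ (the same null basis $\ell,\bar\ell$ the paper uses later). Your version additionally spells out the final sign-choice matching $\{\pm1,\pm\ip\}$ with the pairs $(\sigma_+,\sigma_-)$, which the paper leaves implicit in ``then the claim follows,'' so nothing is missing.
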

\begin{proof}
 By the assumption, 
\[
 \Re (x y) \pm \Im (x y) \geq 0
\]
 holds, and a simple computation shows that it is equivalent to 
\[
 (\Re (x)  \pm \Im (x))(\Re (y)  \pm \Im (y)) \geq 0.
\]
 Then the claim follows.
\end{proof}
Since the first condition in  \eqref{eq:condition} can be rephrased as 
\begin{equation}
 \phi_3^2 =  (\phi_1 + i \phi_2)(\phi_1 - i \phi_2),
\end{equation}
 and by Lemma \ref{lem:squreroot}, there exists 
 $\epsilon \in \{\pm 1, \pm \ip \}$ such that 
 $\epsilon(\phi_1 + i \phi_2)$ and $\epsilon(\phi_1 - i \phi_2)$
 have the square roots. Therefore there exist para-complex functions $\overline{\psi_2}$
 and $\psi_1$
 such that 
\[
 \phi_1 + i \phi_2 = 2 \epsilon \overline{\psi_2}^2, \quad
 \phi_1 - i \phi_2 = 2 \epsilon {\psi_1}^2
\]
 hold. Then $\phi_3$ can be rephrased as $\phi_3 = 2 \psi_1 \overline{\psi_2}$.
 Let us compute the second condition in \eqref{eq:condition} by using
 $\{\psi_1, \overline{\psi_2}\}$ as
 \[
 - \phi_1 \overline{\phi_1} +  \phi_2 \overline{\phi_2} + \phi_3 \overline{\phi_3} 
 = -2 \epsilon \bar \epsilon 
 (\psi_1 \overline{\psi_1} - \epsilon \bar \epsilon\psi_2 \overline{\psi_2})^2.
 \]
 Since we have assumed that the left hand side is positive, $\epsilon$ 
 takes values in 
 \[
  \epsilon \in \{\pm \ip\}.
 \]
 Therefore without loss of generality, we have
 \begin{equation}\label{eq:A}
 \phi_1 = \epsilon \left((\overline{\psi_2})^2 +(\psi_1)^2\right), \:
 \phi_2 = \epsilon \ip \left((\overline{\psi_2})^2 -(\psi_1)^2\right), \:
 \phi_3 =  2 \psi_1 \overline{\psi_2}.
\end{equation}
 Then the normal Gauss map $f^{-1}N$ can be represented in terms of the functions $\psi_1$ and $\psi_2$:
\begin{equation}\label{eq:normalgaussmap}
 f^{-1} N = 2 e^{-u/2}
 \left(
 - \epsilon \left( \psi_1 \psi_2 - \overline {\psi_1} \overline {\psi_2} \right) e_1 
 + \epsilon \ip \left( \psi_1 \psi_2 + \overline {\psi_1} \overline {\psi_2} \right) e_2
 -\left( \psi_2 \overline {\psi_2} - \psi_1 \overline {\psi_1} \right) e_3
 \right),
\end{equation}
 where $e^{u/2} = 2(\psi_2 \overline {\psi_2} + \psi_1 \overline {\psi_1})$.
 We can see that,
 using the functions $(\psi_1, \psi_2)$, 
 the structure equations \eqref{eq:MC2} and \eqref{eq:streq}
 are equivalent to the following nonlinear Dirac equation:
\begin{equation}\label{eq:Diracequation}
\left(
 \begin{array}{ll}
 \partial_z \psi_2 + \mathcal U \psi_1\\
 -\partial_{\overline z} \psi_1 + \mathcal V \psi_2
 \end{array}
 \right)
 =
 \left(
 \begin{array}{ll}
 0\\0
 \end{array}
\right).
\end{equation}
 Here the Dirac potential $\mathcal U$ and $\mathcal V$ are given by
\begin{equation}\label{eq:potential1}
 \mathcal U = \mathcal V =
 -\frac{H}2 e^{u/2}+ \frac{\ip}4 h
\end{equation}
  where 
\[
 e^{u/2}=
 2\left(\psi_2 \overline{\psi_2} + \psi_1 \overline{\psi_1} \right) 
 \quad {\rm and} \quad
 h =2\left( \psi_2 \overline{\psi_2}-\psi_1 \overline{\psi_1}  \right).
\]
\begin{Remark}
\mbox{}
\begin{enumerate}
 \item 
 Without loss of generality,
 we can take  $\psi_2 \overline{\psi_2} + \psi_1 \overline{\psi_1}$ as positive value,
 if necessary, by replacing $( \psi_1, \psi_2 )$ into $( -\ip \psi_1, \ip \psi_2 )$.

\item To prove the equations \eqref{eq:MC2} and \eqref{eq:streq}
 from the nonlinear Dirac equation \eqref{eq:Diracequation} with \eqref{eq:potential1},
 the functions $e^{u/2}$ and $h$ in \eqref{eq:potential1} and solutions $\psi_k \:(k=1,2)$
 have to satisfy the relations
\[
 e^{u/2} = 2 (\psi_2 \overline{\psi_2} + \psi_1 \overline{\psi_1}),
 \quad
 h = 2(\psi_2 \overline{\psi_2} - \psi_1 \overline{\psi_1}).
\]  
\end{enumerate}
\end{Remark}
 For a timelike surface with the constant mean curvature $H=0$,
 the Dirac potential takes purely imaginary values.
 Then, by using \eqref{eq:normalgaussmap},
 we have the following lemma. 
\begin{Lemma}
 Let $f:\D \to (\Nil, ds_-^2)$ be a timelike surface with 
 constant mean curvature $H=0$.
 Then the following statements are equivalent:
\begin{enumerate}
 \item The Dirac potential $\mathcal U$ is not invertible at $p \in \D$.
 \item The function $h$ is equal to zero at $p \in \D$.
 \item $E_3$ is tangent to $f$ at $p \in \D$.
\end{enumerate}
\end{Lemma}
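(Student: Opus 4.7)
The plan is to exploit the very explicit formulas already in place: in the minimal case $H=0$, formula \eqref{eq:potential1} collapses to $\mathcal U = \mathcal V = \tfrac{\ip}{4}h$, and formula \eqref{eq:normalgaussmap} exhibits $f^{-1}N$ component by component in terms of $\psi_1,\psi_2$. So the three conditions all turn into statements about the single real quantity $h = 2(\psi_2\overline{\psi_2}-\psi_1\overline{\psi_1})$.

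For (1)$\iff$(2), I would first observe that $h$ is real-valued: since $\psi_k=a_k+b_k\ip$ gives $\psi_k\overline{\psi_k}=a_k^2-b_k^2\in\R$. Hence $\mathcal U=\tfrac{\ip}{4}h$ is a purely imaginary para-complex scalar. A short computation shows that for $c\in\R$, the element $c\ip\in\C$ is invertible iff $c\neq 0$ (indeed, if $c\neq 0$ then $(c\ip)^{-1}=\tfrac{1}{c}\ip$, while if $c=0$ the element is plainly non-invertible). Therefore $\mathcal U$ fails to be invertible at $p$ precisely when $h(p)=0$.

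For (2)$\iff$(3), I would read off the $e_3$-component of $f^{-1}N$ from \eqref{eq:normalgaussmap}:
\[
\langle f^{-1}N,\,e_3\rangle = -2e^{-u/2}\bigl(\psi_2\overline{\psi_2}-\psi_1\overline{\psi_1}\bigr) = -e^{-u/2}\,h,
\]
using that the metric $ds_-^2$ restricted to $e_3$ is $+1$. Since $N$ is the unit normal of $f$, the vector $E_3$ (which is the left translate of $e_3$ and is left invariant) is tangent to $f$ at $p$ if and only if $N(f(p))$ is $ds_-^2$-orthogonal to $E_3(f(p))$, equivalently $\langle f^{-1}N,e_3\rangle(p)=0$. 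By the formula above and the fact that $e^{-u/2}$ is nonzero, this is equivalent to $h(p)=0$.

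Neither step should require any substantial technical work; the only subtle point is handling invertibility in the para-complex algebra $\C$, which unlike $\mathbb C$ contains nontrivial zero divisors. The brief lemma on invertibility of $c\ip$ is the only place this subtlety enters, and it is elementary. Thus assembling the three equivalences is a short verification from \eqref{eq:normalgaussmap} and \eqref{eq:potential1}.
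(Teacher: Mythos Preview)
Your argument is correct and follows exactly the approach indicated in the paper, which simply refers to \eqref{eq:normalgaussmap} (together with the explicit form \eqref{eq:potential1} of the Dirac potential) as the justification. The subsequent Remark in the paper even records the two ingredients you spell out: that invertibility of $\mathcal U$ amounts to $(\Re\mathcal U)^2-(\Im\mathcal U)^2\neq 0$, and that the equivalence $(2)\Leftrightarrow(3)$ holds for arbitrary $H$.
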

\begin{Remark}
 The equivalence between $(2)$ and $(3)$ holds regardless of the value of $H$.
 In general,
 $\mathcal U$ is invertible if and only if
 $(\Re \mathcal U)^2 - (\Im \mathcal U)^2 \neq 0$.
\end{Remark}
 Hereafter we will exclude the points where $\mathcal U$ is not invertible,
 that is, we will restrict ourselves to the case of 
\begin{equation}\label{eq:assumption1}
 \left( {\rm Re}\, \mathcal U \right)^2 - \left( {\rm Im}\, \mathcal U \right)^2 \neq 0.
\end{equation}
 Then, by using \eqref{eq:exp},
 the Dirac potentials can be written as
\begin{equation}\label{eq:potential3}
 \mathcal U =\mathcal V = \tilde \epsilon e^{w/2}
\end{equation}
 for some $\C$-valued function $w$ and $\tilde \epsilon \in \{\pm 1, \pm \ip \}$.
 In particular,
 if the mean curvature is zero
 and the function $h$ has positive values,
 then $\tilde \epsilon = \ip$.

\subsection{Hopf differential and an associated quadratic differential}
 The Hopf differential $A dz^2$
 is the $(2,0)$-part of the second fundamental form
 for $f$, that is,
\[A = g (\nabla _{\partial_z} f_z, N). \]
 A straightforward computation shows that
 the coefficient function $A$ is rephrased in terms of $\psi_k$ as follows:
\[
 A = 2 \{ \psi_1 (\overline{\psi_2})_z - \overline{\psi_2}(\psi_1)_z \}
 - 4 \ip \psi_1^2 (\overline{\psi_2})^2.
\]
 Next we define a para-complex valued function $B$ by
\begin{equation}\label{eq:ARdiff}
 B = \frac14 (2 H - \ip) \tilde A, \quad \mbox{where}\quad  \tilde A = A - 
 \frac{\phi_3^2}{2 H - \ip}.
\end{equation}
 Here $A$ and $\phi_3$ are the Hopf differential and the $e_3$-component of $f^{-1}f_z$ for $f$.
 It is easy to check
 the quadratic differential $B dz^2$ is defined entirely 
 and it will be called the \textit{Abresch-Rosenberg differential}.

\subsection{Lax pair for timelike surfaces}
 The nonlinear Dirac equation can be represented
 in terms of the Lax pair type system.
\begin{Theorem}
 Let $\D$ be a simply connected domain in $\C$ and $f: \D 
 \to \Nil$ a conformal timelike immersion
 for which the Dirac potential $\mathcal U$ satisfies 
 \eqref{eq:assumption1}.
 Then the vector $\widetilde \psi 
 =( \psi_1, \psi_2)$ satisfies the system of equations
\begin{equation}\label{eq:Laxpair}
\widetilde \psi_{z} = \widetilde \psi \widetilde U, \quad 
\widetilde \psi_{\bar z} = \widetilde \psi \widetilde V,
\end{equation}
 where
\begin{align}
 \label{eq:tildeU1}
\widetilde U & =  
\begin{pmatrix}
 \frac12 w_z + \frac12 H_z \tilde \epsilon e^{-w/2} e^{u/2} &
 - \tilde \epsilon e^{w/2}\\
 B \tilde \epsilon e^{-w/2}& 0 
\end{pmatrix},
\\
 \label{eq:tildeV1}
\widetilde V & = 
\begin{pmatrix}
 0 &  - \overline B \tilde \epsilon e^{-w/2}\\ 
 \tilde \epsilon e^{w/2}& \frac12 w_{\bar z} + \frac 1 2 H_{\bar z} \tilde \epsilon e^{-w/2} e^{u/2}
\end{pmatrix}.
\end{align}
 Here, $\tilde \epsilon \in \{\pm1, \pm \ip\}$ is the number decided by \eqref{eq:potential3}.
 Conversely, every solution $\widetilde{\psi}$ to \eqref{eq:Laxpair}
 with \eqref{eq:potential3} and \eqref{eq:potential1}
 is a solution of the nonlinear Dirac equation \eqref{eq:Diracequation} with \eqref{eq:potential1}.
\end{Theorem}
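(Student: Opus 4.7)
The strategy is to verify \eqref{eq:Laxpair} entry by entry. Under \eqref{eq:potential3}, $\mathcal U = \mathcal V = \tilde\epsilon e^{w/2}$, so the Dirac equation \eqref{eq:Diracequation} reads $(\psi_2)_z = -\tilde\epsilon e^{w/2}\psi_1$ and $(\psi_1)_{\bar z} = \tilde\epsilon e^{w/2}\psi_2$. Expanding $\widetilde\psi\widetilde U$ and $\widetilde\psi\widetilde V$ with $\widetilde\psi = (\psi_1,\psi_2)$ treated as a row vector, the $(1,2)$-entry of $\widetilde U$ and the $(2,1)$-entry of $\widetilde V$ reproduce exactly these two equations, so those two scalar equations of the Lax pair are automatic. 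What remains is to verify the formulas for $(\psi_1)_z$ and $(\psi_2)_{\bar z}$, which carry the geometric content of the Hopf differential and the Abresch-Rosenberg differential.

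For $(\psi_1)_z$, the plan is to start from the Hopf differential identity
\[
A = 2\{\psi_1(\overline{\psi_2})_z - \overline{\psi_2}(\psi_1)_z\} - 4\ip\psi_1^2(\overline{\psi_2})^2,
\]
solve it algebraically for $(\psi_1)_z$, and eliminate the unwanted derivative $(\overline{\psi_2})_z$ by $\partial_z$-differentiating the conformality identity $e^{u/2} = 2(\psi_1\overline{\psi_1} + \psi_2\overline{\psi_2})$ and substituting the Dirac equation together with its para-complex conjugate, which provide $(\psi_2)_z$ and $(\overline{\psi_1})_z$. After this elimination the $1/\overline{\psi_2}$-denominators cancel via $\psi_1\overline{\psi_1} - \tfrac12 e^{u/2} = -\psi_2\overline{\psi_2}$, leaving a linear expression for $(\psi_1)_z$ in $\psi_1,\psi_2,A$ and $u_z$. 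Trading $A$ for $B$ through \eqref{eq:ARdiff} and using \eqref{eq:potential1} together with \eqref{eq:potential3} to convert $u_z, H, h$ into $w, \tilde\epsilon$ then produces the desired coefficients $\tfrac12 w_z + \tfrac12 H_z\tilde\epsilon e^{-w/2}e^{u/2}$ on $\psi_1$ and $B\tilde\epsilon e^{-w/2}$ on $\psi_2$. The equation for $(\psi_2)_{\bar z}$ is the conjugate-symmetric analog, obtained by the same procedure applied to $\overline A$, producing $\overline B$ in place of $B$ and $\partial_{\bar z}$-derivatives in place of $\partial_z$-derivatives. The converse is immediate: reading off the off-diagonal entries of $\widetilde U$ and $\widetilde V$ yields the Dirac equation \eqref{eq:Diracequation} once \eqref{eq:potential1} and \eqref{eq:potential3} are invoked.

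The main obstacle is the bookkeeping during the elimination of $(\overline{\psi_2})_z$. The cancellation of the $1/\overline{\psi_2}$-terms requires collecting all cross-terms generated by differentiating the conformality identity, including a contribution $\psi_1\overline{\psi_2}(\overline{\mathcal U} - \mathcal U) = -\tfrac{\ip h}{2}\psi_1\overline{\psi_2}$ that arises because $\mathcal U$ is not real. Moreover, the $H_z$-term in the top-left entry of $\widetilde U$ only emerges when one differentiates the full potential identity \eqref{eq:potential1} rather than the abbreviated \eqref{eq:potential3}, so the two expressions must be used in tandem. Finally, the sign of $\tilde\epsilon \in \{\pm 1,\pm\ip\}$ interacts nontrivially with $\overline\ip = -\ip$ in the $2H \pm \ip$ factors from \eqref{eq:ARdiff}, and these signs must be tracked consistently throughout.
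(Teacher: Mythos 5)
Your proposal is correct, and it uses exactly the same ingredients as the paper (the spinor formula for the Hopf differential, the nonlinear Dirac equation, the conformality identity, and the definition of $B$), but it organizes the main computation in the reverse order, and the comparison is worth making explicit. The paper differentiates the Dirac potential $\tilde\epsilon e^{w/2}=-\tfrac{H}{2}e^{u/2}+\tfrac{\ip}{4}h$ with respect to $z$ at the outset, substitutes the Dirac equation for $(\psi_2)_z$ and $(\overline{\psi_1})_z$, multiplies by $\psi_1$, and trades the $\psi_1(\overline{\psi_2})_z$-term for $B$; the stated coefficient $\tfrac12 w_z+\tfrac12 H_z\tilde\epsilon e^{-w/2}e^{u/2}$ then drops out in one step because $\tfrac{2H-\ip}{2}\psi_2\overline{\psi_2}+\tfrac{2H+\ip}{2}\psi_1\overline{\psi_1}=-\tilde\epsilon e^{w/2}$. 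Your elimination (the Hopf identity together with the $\partial_z$-derivative of $e^{u/2}=2(\psi_1\overline{\psi_1}+\psi_2\overline{\psi_2})$, a $2\times 2$ linear system with determinant $-e^{u/2}\neq 0$, including the cross term $\psi_1\overline{\psi_2}(\overline{\mathcal U}-\mathcal U)=-\tfrac{\ip h}{2}\psi_1\overline{\psi_2}$ you flag) first lands on the valid identity $(\psi_1)_z=\tfrac12 u_z\psi_1-\ip\psi_1^2\overline{\psi_2}-e^{-u/2}A\psi_2$. The one place where your plan understates the work is the final ``conversion'': since $\psi_1,\psi_2$ are concrete functions rather than independent symbols, rewriting that expression with the coefficients of \eqref{eq:tildeU1} is not a purely algebraic substitution of \eqref{eq:potential1} and \eqref{eq:potential3}; you must differentiate the potential identity (as you note, this is where $H_z$ enters) \emph{and} re-express $h_z$ through the Dirac equation and the definition of $A$ --- which is essentially the differentiation the paper performs up front. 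I checked that this closes: after those substitutions the discrepancy between the two expressions for $(\psi_1)_z$ reduces to $h\bigl[\tfrac12 u_z\psi_1-\ip\psi_1^2\overline{\psi_2}-e^{-u/2}A\psi_2-(\psi_1)_z\bigr]=0$, which vanishes by your own formula, so your route works, only with heavier bookkeeping; the paper's ordering is the more economical one. Your treatment of the trivial entries and of the converse (reading the Dirac equation off the $(1,2)$-entry of $\widetilde U$ and the $(2,1)$-entry of $\widetilde V$) coincides with the paper's.
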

\begin{proof}
 By computing the derivative of the Dirac 
 potential $\tilde \epsilon e^{w/2}$ with respect to $z$,
 we have
\[
 \frac12 w_z \tilde \epsilon e^{w/2}
 =
 - \frac12 H_z e^{u/2}
 + 2 \ip H \psi_1\psi_2 (\overline \psi_2)^2
 -\frac{2H -\ip}2 \psi_2 (\overline \psi_2)_z
 -\frac{2H +\ip}2 \overline{\psi_1} (\psi_1)_z.
\]
 Multiplying the equation above by $\psi_1$
 and using the function $B$ defined in \eqref{eq:ARdiff},
 we derive
\[
 (\psi_1)_z
 =
 \left( \frac12 w_z + \frac12 H_z \tilde \epsilon e^{-w/2}e^{u/2} \right) \psi_1
 +
 B \tilde \epsilon e^{-w/2} \psi_2.
\]
 The derivative of $\psi_2$ with respect to $z$ is given by the nonlinear Dirac equation.
 Thus we obtain the first equation of \eqref{eq:Laxpair}.
 We can derive the second equation of \eqref{eq:Laxpair} in a similar way by differentiating the potential with respect to $\bar z$.
 
 Conversely, if the vector $\widetilde \psi = ( \psi_1, \psi_2)$
 is a solution of \eqref{eq:Laxpair},
 the terms of $(\psi_1)_{\bar z}$ and $(\psi_2)_{z}$ of \eqref{eq:Laxpair}
 are the equations just we want.
\end{proof}
 The compatibility condition of the above system is 
\begin{gather}
 \frac12 w_{z \bar z}
 + e^{w}
 -B \overline{B} e^{-w}
 + \frac12 (H_{z \bar z} + p) \tilde \epsilon e^{-w/2} e^{u/2} =0,\label{GaussEquation} \\
 \overline B_{z} \tilde \epsilon e^{-w/2}
 =
 - \frac12 \overline B H_{z} e^{-w} e^{u/2}
 - \frac12 H_{\bar z} e^{u/2}, \label{Codazzi1}\\
 B_{\bar z} \tilde \epsilon e^{-w/2} = -\frac12 B H_{\bar z} e^{-w} e^{u/2} - \frac12 H_{z} e^{u/2}, \label{Codazzi2}
\end{gather}
 where $p = H_{z}(-w/2 + u/2)_{\bar z}$ for the (1,1)-entry and $p = H_{\bar z}(-w/2 + u/2)_{z}$ for the (2,2)-entry.
 From the above compatibility conditions we have the following:
\begin{Theorem}\label{thm:constant}
 For a constant mean curvature timelike surface in $\Nil$
 which has the Dirac potential invertible anywhere, 
 the Abresch-Rosenberg differential is para-holomorphic.
\end{Theorem}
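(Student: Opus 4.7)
The plan is to extract the conclusion directly from the Codazzi-type equation \eqref{Codazzi2}, which was already derived in the preceding theorem as part of the compatibility conditions for the Lax pair \eqref{eq:Laxpair}. Since the surface has constant mean curvature, both $H_z$ and $H_{\bar z}$ vanish identically, so the entire right-hand side of \eqref{Codazzi2} collapses to zero, leaving
\[
B_{\bar z}\,\tilde\epsilon\, e^{-w/2}=0.
\]
It then suffices to show that the factor $\tilde\epsilon e^{-w/2}$ is nowhere a zero divisor. This is exactly guaranteed by the running hypothesis that the Dirac potential $\mathcal U=\tilde\epsilon e^{w/2}$ is invertible everywhere: by the criterion \eqref{eq:exp} for the existence of a para-complex logarithm, $\Re(w/2)\pm\Im(w/2)$ are strictly positive, so $e^{-w/2}$ is a well-defined invertible element of $\C$ at every point, and multiplication by $\tilde\epsilon\in\{\pm 1,\pm\ip\}$ preserves invertibility. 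Dividing out yields $B_{\bar z}=0$, so $B\,dz^2$ is para-holomorphic.

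In this framework there is essentially no technical obstacle: the real work has already been front-loaded into deriving the Codazzi identity \eqref{Codazzi2} from the compatibility of \eqref{eq:Laxpair}. The only remark that deserves a sentence is that the quadratic differential $B\,dz^2$ is globally defined on $M$ (as was noted right after \eqref{eq:ARdiff}), so the local vanishing $B_{\bar z}=0$ in any conformal chart produces a globally para-holomorphic quadratic differential. Neither the Gauss equation \eqref{GaussEquation} nor the conjugate relation \eqref{Codazzi1} needs to be invoked — though \eqref{Codazzi1} independently gives $\overline B_z=0$, which is of course equivalent to $B_{\bar z}=0$ by conjugation and offers a consistency check.
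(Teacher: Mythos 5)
Your argument is exactly the paper's: Theorem \ref{thm:constant} is stated there as an immediate consequence of the Codazzi equation \eqref{Codazzi2} (equivalently \eqref{Codazzi1}), which for constant $H$ collapses to $B_{\bar z}\,\tilde\epsilon e^{-w/2}=0$, and since $\tilde\epsilon e^{-w/2}$ is a unit in $\C$ this gives $B_{\bar z}=0$. One small imprecision: the positivity conditions \eqref{eq:exp} concern the real and imaginary parts of $e^{w/2}=\tilde\epsilon^{-1}\mathcal U$, not of $w/2$ itself, but this is harmless because $e^{-w/2}$ is invertible in $\C$ in any case (its inverse being $e^{w/2}$) and $\tilde\epsilon\in\{\pm1,\pm\ip\}$ is a unit.
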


\begin{Remark}
 To obtain a timelike immersion for solutions $w, B$ and $H$ of the compatibility condition
  \eqref{GaussEquation}, \eqref{Codazzi1} and \eqref{Codazzi2},
 a solution $\widetilde \psi =( \psi_1, \psi_2)$ of \eqref{eq:Laxpair} has to satisfy
\[
 \tilde \epsilon e^{w/2} = -H(\psi_2 \overline{\psi_2} + \psi_1 \overline{\psi_1})
 + \frac{\ip}2 (\psi_2 \overline{\psi_2} - \psi_1 \overline{\psi_1}).
\]
 This gives an overdetermined system and it seems not easy to find 
 a general solution for arbitrary $H$, but for minimal surfaces we will show that 
 it will be automatically satisfied.
\end{Remark}

\section{Timelike minimal surfaces in $\Nil$}
 A timelike surface in $\Nil$ with the constant mean curvature $H=0$
 is called a timelike minimal surface.
 By Theorem \ref{thm:constant},
 the Abresch-Rosenberg differential for a timelike minimal surface
 is para-holomorphic.
 For example
 the triple $B=0, H=0$ and $e^w=16 / (1+16z \bar z)^2$
 is a solution of the compatibility condition
 \eqref{GaussEquation}, \eqref{Codazzi1} and \eqref{Codazzi2}.
 In fact these are derived from a horizontal plane
\begin{equation}\label{eq:horizontalplane}
 f(z) = \left( \frac{2\ip(z- \bar z)} {1+z \bar z}, \frac{2(z + \bar z)} {1 + z \bar z}, 0 \right).
\end{equation}
 Thus the horizontal plane \eqref{eq:horizontalplane} is a timelike minimal surface in $\Nil$.
 We will give examples of timelike minimal surfaces in Section \ref{sc:Example}.
 In this section we characterize timelike minimal surfaces
 in terms of the normal Gauss map.

\subsection{The normal Gauss map} \label{gaussmap}
 For a timelike surface in $\Nil$, the normal Gauss map is given by \eqref{eq:normalgaussmap}.
 Clearly it takes values in de Sitter two sphere $\widetilde{\mathbb{S}}^2_1 \subset \mathfrak{nil}_3$:
\[
 \widetilde{\mathbb{S}}^2_1 = \left\{
 x_1 e_1+ x_2 e_2 + x_3 e_3 \in \mathfrak{nil}_3 \mid
 -x_1^2 + x_2^2 +x_3^2 =1 
 \right\}.
\]
 From now on we will assume that the function $h$ takes positive values,
 that is,
 the image of the normal Gauss map is in lower half part of the de Sitter two sphere.
 Moreover,
 we assume that the timelike surface has the pair of functions $(\psi_1, \psi_2)$
 of the formula \eqref{eq:A} with $\epsilon = \ip$.
 If the function $h$ takes negative values,
 or if the functions $(\psi_1, \psi_2)$ are given
 with $\epsilon = -\ip$,
 by a similar to the case of $h>0$ and $\epsilon = \ip$,
 we can get same results.

 The normal Gauss map $f^{-1}N$ can be considered
 as a map into another de Sitter two sphere
 in the Minkowski space 
\[
 \mathbb{S}^2_1 = \left\{
 (x_1, x_2 , x_3 ) \in \Min \mid
 x_1^2 - x_2^2 +x_3^2 =1 
 \right\}\subset \Min_{(+, -, +)}
\]
 through the stereographic projections
 from $(0, 0, 1) \in \widetilde{\mathbb{S}}^2_1 \subset \mathfrak{nil}_3$:
\[
\pi_{\mathfrak{nil}}^+ :  \mathfrak{nil}_3 \supset \widetilde{\mathbb{S}}^2_1 \ni x_1 e_1 + x_2 e_2 +x_3 e_3
 \mapsto
 \left( \frac{x_1}{1-x_3}, \frac{x_2}{1-x_3}, 0\right) = \frac{x_1}{1-x_3} + \ip \frac{x_2}{1-x_3} \in \C
\]
 and from $(0, 0, -1) \in \mathbb{S}^2_1 \subset \Min_{(+, -, +)}$:
\[
 \pi_{\Min}^- : \Min_{(+, -, +)} \supset \mathbb{S}^2_1 \ni (x_1, x_2, x_3)
 \mapsto
 \left( \frac{x_1}{1+x_3}, \frac{x_2}{1+x_3}, 0\right) = \frac{x_1}{1+x_3} + \ip \frac{x_2}{1+x_3} \in \C.
\]
 In particular, the inverse map $(\pi _ {\Min}^-) ^{-1}$ is given by
\[
(\pi _ {\Min}^-) ^{-1}(g) = \left( \frac{2\Re g}{1+ g \overline{g}}, \frac{2\Im g}{1+ g \overline{g}}, \frac{1- g \overline{g}}{1+ g \overline{g}} \right)
\]
 for $g = \left( \Re g, \Im g, 0 \right) \in \C$.
 Since the normal Gauss map takes values
 in the lower half of the de Sitter two sphere in $\mathfrak{nil}_3$,
 the image under the projection $\pi_{\mathfrak{nil}}^+$ is
 in the region enclosed by four hyperbolas, see Figure \ref{fig:desitter}.
 Two of the four hyperbolas correspond to the vertical points,
 that is, the points where $h$ vanishes,
 and the others correspond to the infinite-points,
 that is, the points where the first fundamental form degenerates.
 Since first and second sign of metrics of $\Nil$ and $\Min_{(+, -, +)}$ are interchanged,
 the image of each hyperbola under the inverse map $(\pi^{-}_{\Min})^{-1}$
 plays the other role.

\begin{figure}[t]
\centering
\includegraphics[width=0.25\linewidth]{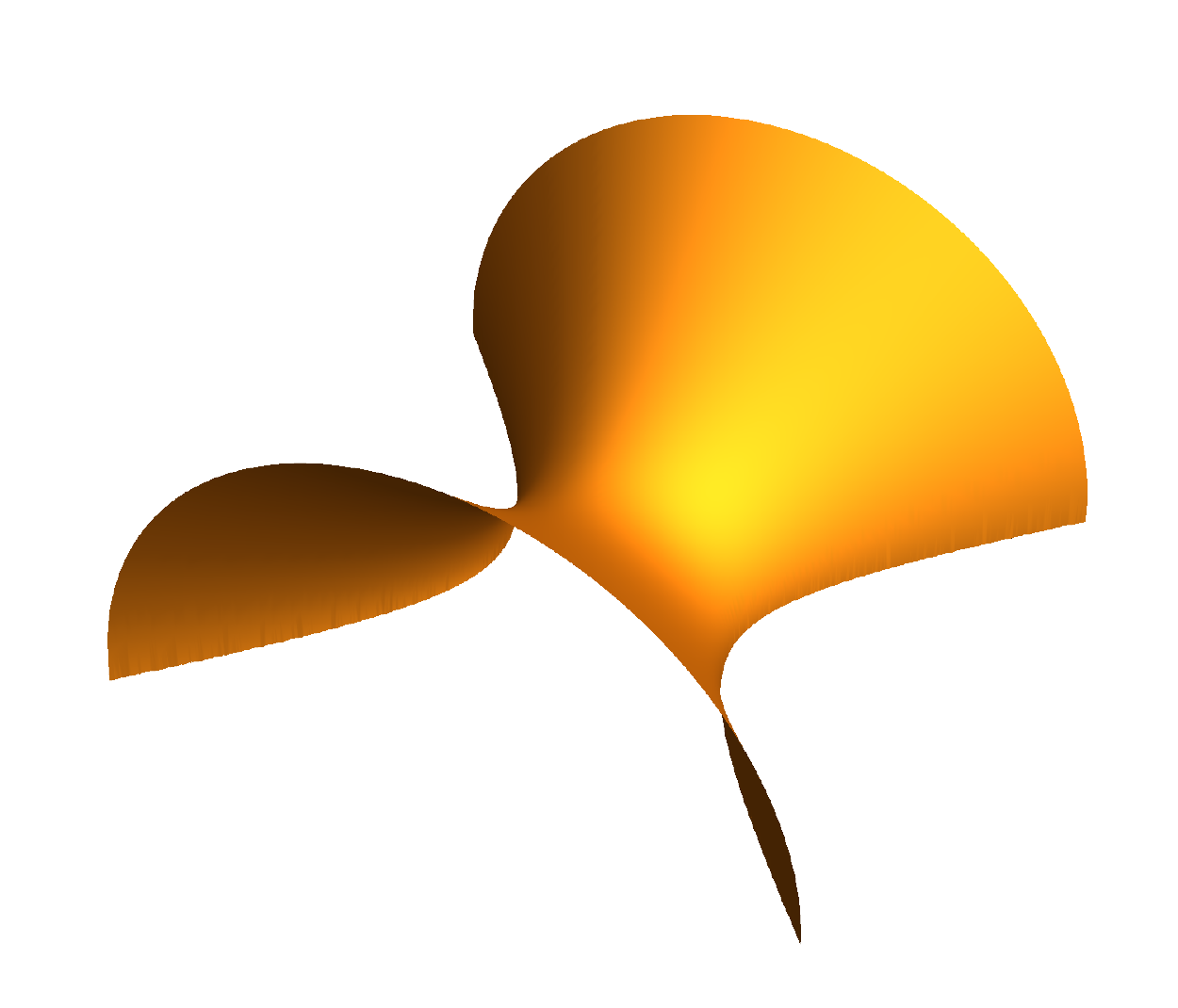}\hspace{0.5cm}
\includegraphics[width=0.3\linewidth]{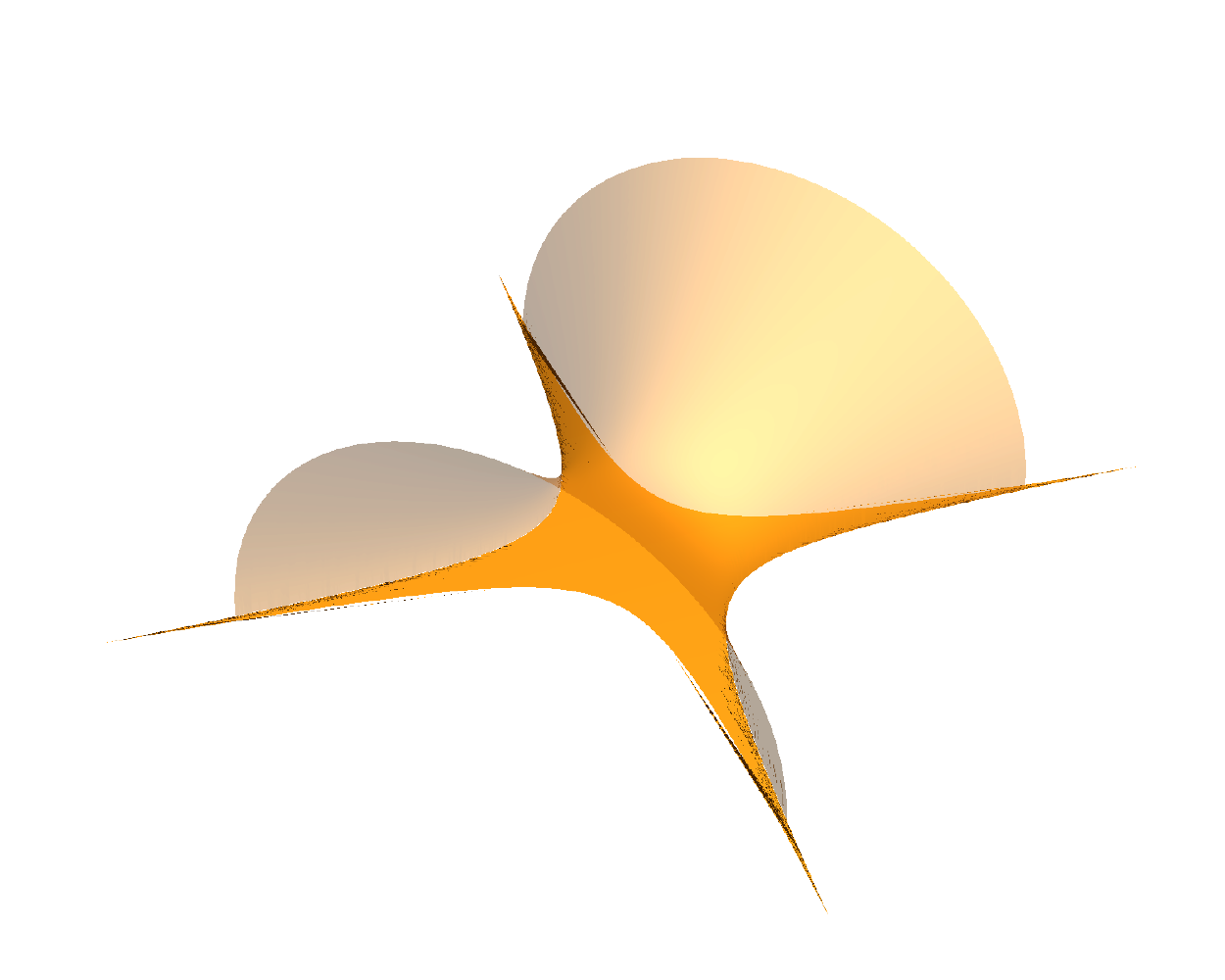}\hspace{0.5cm}
\includegraphics[width=0.3\linewidth]{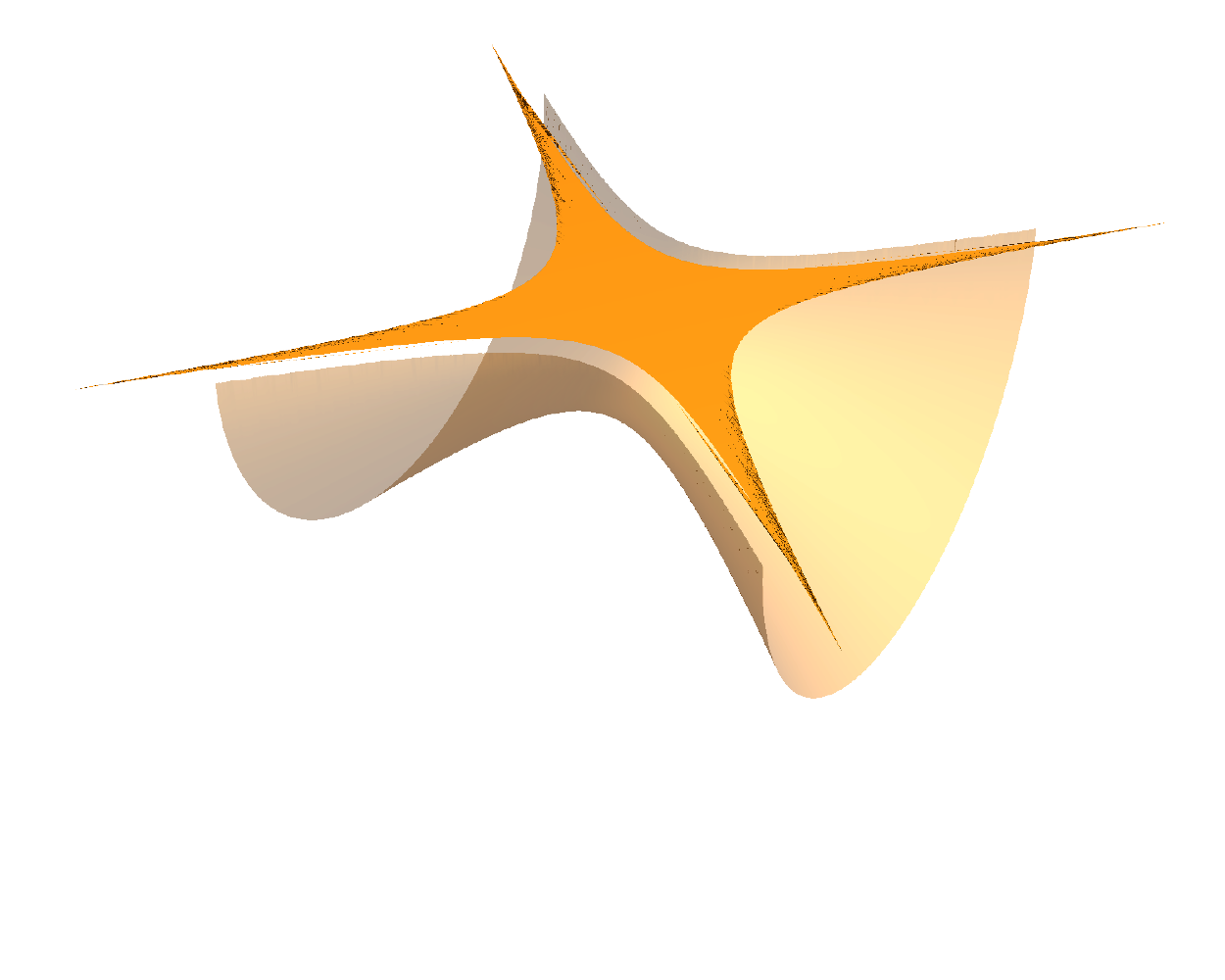}
\caption{The upper half part of the de Sitter two sphere $\mathbb S^2_1$ (left) 
 and its stereographic projection (middle), and 
 the stereographic projection of the lower half part of $\tilde {\mathbb S^{2}_1}$ 
(right).}\label{fig:desitter}
\end{figure}
 
 Define a map $g$ by the composition of the stereographic projection $\pi_{\mathfrak{nil}}^+$ with $f^{-1}N$,
 and then we obtain 
\[
 g = \ip \frac{\overline{\psi_1}}{\psi_2} \in \C.
\]
 Thus the normal Gauss map can be represented as
\[
 f^{-1} N = \frac1{1-g \overline{g}}\left( 2 \Re (g) e_1 + 2 \Im (g)e_2 - (1+g \overline{g})e_3\right)
\]
 and
\begin{equation}\label{eq:A++}
 (\pi _ {\Min}^-) ^{-1} \circ \pi_{\mathfrak{nil}}^+ \circ f^{-1}N
 =
 \frac{1} {\psi_2 \overline{\psi_2} - \psi_1 \overline{\psi_1}}
 \left( -2 \Im(\psi_1 \psi_2), 2 \Re(\psi_1 \psi_2), \psi_2 \overline{\psi_2} + \psi_1 \overline{\psi_1} \right).
\end{equation}
 Let $\isu$ be \textit{the special para-unitary Lie algebra} defined by
\[
 \isu = \left\{
 \begin{pmatrix}
 a\ip & \bar b\\ 
 b & -a\ip
 \end{pmatrix}
 \mid a \in \R, b \in \C
 \right\}
\]
 with the usual commutator of the matrices.
 We assign the following indefinite product on $\isu$:
\[
 \langle X,Y \rangle := 2 {\rm tr} (XY).
\]
 Then we can identify the Lie algebra $\isu$ with $\Min_{(+, -, +)}$ isometrically by
\begin{equation} \label{eq:identification1}
\isu \ni \frac12 
\begin{pmatrix}
 r \ip& -p - q \ip \\
 -p + q \ip & - r \ip
\end{pmatrix}
\longleftrightarrow
(p, q, r) \in \Min_{(+, -, +)}.
\end{equation}
 Let $\ISU$ be the \textit{special para-unitary group} of degree two 
 corresponding to $\isu$:
\[
 \ISU= \left\{
\begin{pmatrix}
 \alpha & \beta \\
 \bar \beta &\bar \alpha
\end{pmatrix}
 \mid
 \alpha,  \beta\in \C, 
 \alpha \bar \alpha -\beta \bar \beta =1
\right\}.
\]
By the identification \eqref{eq:identification1}, the represented normal Gauss map \eqref{eq:A++} is equal to
\[
 (\pi _ {\Min}^-) ^{-1} \circ \pi_{\mathfrak{nil}}^+ \circ f^{-1}N=  \frac{\ip}{2} \ad(F) 
 \begin{pmatrix}
 1&0\\
 0&-1
 \end{pmatrix},
\]
 where $F$ is a $\ISU$-valued map defined by 
\begin{equation}\label{eq:framingF}
F=
\frac{1}{\sqrt{\psi_2 \overline{\psi_2} - \psi_1 \overline{\psi_1}}}
\begin{pmatrix}
 \overline{\psi_2} & \overline{\psi_1}\\
 \psi_1 & \psi_2
\end{pmatrix}.
\end{equation}
The $\ISU$-valued function $F$ defined as above
is called a \textit{frame} of the normal Gauss map $f^{-1}N$.
\begin{Remark}
 In general a frame of the normal Gauss map $f^{-1} N$ is not unique, that is,
 for some frame $F$, there is a freedom of $\ISU$-valued initial 
 condition $F_0$ and $\Uone$-valued map $k$ such that $F_0F k$ is an another frame. 
 In this paper we use the particular frame in \eqref{eq:framingF}, since 
 arbitrary choice of initial condition does not correspond to 
 a given timelike surface $f$.
\end{Remark}
\subsection{Characterization of timelike minimal surfaces}
 Let $F$ be the frame defined in \eqref{eq:framingF} of the normal 
 Gauss map $f^{-1} N$.
 By taking the gauge transformation
\[
 F \mapsto F
\begin{pmatrix}
 e^{-w/4}&0\\
 0&e^{-w/4}
\end{pmatrix},
\]
 we can see the system \eqref{eq:Laxpair} is equivalent to the matrix differential equations
\begin{equation}\label{eq:Laxpair2}
 F_{z} = F U, \quad 
 F_{\bar z} = F V,
\end{equation}
 where
\begin{align*}
  U & =  
\begin{pmatrix}
 \frac14 w_{z} + \frac 1 2  H_z \tilde \epsilon e^{-w/2} e^{u/2}& - \tilde \epsilon e^{w/2} \\
 B \tilde \epsilon e^{-w/2}& -\frac14w_{z} 
\end{pmatrix},\\
 V & = 
\begin{pmatrix}
 -\frac14w_{\bar z} &  - \bar B \tilde \epsilon e^{-w/2}\\ 
 \tilde \epsilon e^{w/2}& \frac14 w_{\bar z} + \frac 1 2 H_{\bar z} \tilde \epsilon e^{-w/2} e^{u/2}
\end{pmatrix}.
\end{align*}
 We define a family of Maurer-Cartan forms $\alpha^{\mu}$ 
 parameterized by $\mu \in \left\{ e^{\ip t} \,|\, t \in \R \right\}$ as follows:
\begin{equation}\label{eq:alphamu}
 \alpha^\mu := U^\mu dz+ V^\mu d\bar z , 
\end{equation}
 where
\begin{align}
 U^\mu & =  
\begin{pmatrix}\label{eq:Umu}
 \frac14 w_{z} + \frac 1 2  H_z \tilde \epsilon e^{-w/2} e^{u/2}&
 - \mu^{-1} \tilde \epsilon e^{w/2} \\
 \mu^{-1} B \tilde \epsilon e^{-w/2}& -\frac14w_{z} 
\end{pmatrix},\\
 V^\mu & = 
\begin{pmatrix}\label{eq:Vmu}
 -\frac14w_{\bar z} &
 - \mu \bar B \tilde \epsilon e^{-w/2}\\ 
 \mu \tilde \epsilon e^{w/2}&
 \frac14 w_{\bar z} + \frac 1 2 H_{\bar z} \tilde \epsilon e^{-w/2} e^{u/2}
\end{pmatrix}.
\end{align}
 
\begin{Theorem}\label{thm:main}
 Let $f$ be a conformal timelike immersion from a simply connected domain 
 $\D \subset \C$ into $\Nil$ 
 satisfying \eqref{eq:assumption1}.
 Then the following conditions are mutually 
 equivalent$:$
\begin{enumerate}
 \item $f$ is a timelike minimal surface.
 \item The Dirac potential $\mathcal U  =\tilde \epsilon e^{w/2} = - \frac{H}2 e^{u/2} + \frac{\ip}4h$ takes
 purely imaginary values.
 \item $d + \alpha^{\mu}$ defines a family of flat connections on 
 $\D \times \ISU$.
 \item The normal Gauss map $f^{-1} N$ is a Lorentz harmonic 
 map into de Sitter two sphere $\mathbb{S}^2_1 \subset \Min_{(+, -, +)}$.
\end{enumerate}
\end{Theorem}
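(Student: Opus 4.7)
The plan is to prove the equivalences by establishing a chain $(1) \Leftrightarrow (2) \Leftrightarrow (3) \Leftrightarrow (4)$, exploiting the fact that the real/imaginary structure of the Dirac potential is precisely the bridge between the geometric data $(H,h)$ and the loop-group flatness condition.

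The equivalence $(1) \Leftrightarrow (2)$ is immediate from \eqref{eq:potential1}. Since $e^{u/2}>0$ and both $H$ and $h$ are real-valued, the real and imaginary parts of $\mathcal{U} = -\tfrac{H}{2}e^{u/2}+\tfrac{\ip}{4}h$ are $\Re\mathcal{U} = -\tfrac{H}{2}e^{u/2}$ and $\Im\mathcal{U}=\tfrac{h}{4}$. Hence $\mathcal{U}$ is purely imaginary iff $H\equiv 0$, i.e.\ iff $f$ is timelike minimal. Under these hypotheses, the discussion after \eqref{eq:potential3} forces $\tilde\epsilon=\ip$, a normalization I will use from this point.

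For $(2) \Leftrightarrow (3)$, I would expand the flatness condition $d\alpha^\mu + \alpha^\mu \wedge \alpha^\mu = 0$ as a polynomial in $\mu$ and $\mu^{-1}$ and collect coefficients. When $H=0$ the compatibility conditions \eqref{GaussEquation}, \eqref{Codazzi1}, \eqref{Codazzi2} collapse to
\begin{equation*}
\tfrac{1}{2}w_{z\bar z} + e^{w} - B\overline{B}\,e^{-w} = 0, \qquad B_{\bar z}=0, \qquad \overline{B}_{z}=0.
\end{equation*}
A short calculation shows that the $\mu^{\pm 2}$ terms in the flatness of $\alpha^\mu$ vanish identically (since the off-diagonal parts in $U^\mu$ are all proportional to $\mu^{-1}$ and those in $V^\mu$ to $\mu$), the $\mu^{\pm 1}$ coefficients reproduce the Codazzi equations $B_{\bar z}=0$ and $\overline{B}_{z}=0$, and the $\mu^0$ coefficient reproduces the Gauss equation above. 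Conversely, if flatness holds for all $\mu$ on the para-unit circle, matching coefficients forces $H_z=H_{\bar z}=0$, together with the minimal surface compatibility equations; the differential constraint $H_z=H_{\bar z}=0$ together with the original Dirac-potential formula pins down $\mathcal{U}$ to be purely imaginary on a connected domain.

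For $(3) \Leftrightarrow (4)$, I would interpret the frame $F$ of \eqref{eq:framingF} via the identification \eqref{eq:identification1} as a lift of the (projected) normal Gauss map $(\pi_\Min^-)^{-1}\!\circ \pi_{\mathfrak{nil}}^+\!\circ f^{-1}N$ into the symmetric space $\S^2_{1+} \cong \ISU/\Uone$, with Cartan involution $\sigma = \ad\,\mathrm{diag}(1,-1)$ splitting $\isu^\C = \slc$ into the diagonal subalgebra (the $+1$-eigenspace of $\sigma$) and off-diagonal complement (the $-1$-eigenspace). The $\mu$-insertion in \eqref{eq:Umu}--\eqref{eq:Vmu} is precisely the standard DPW/loop-group twist: leave the diagonal part of $F^{-1}dF$ untouched and multiply the off-diagonal $(1,0)$-component by $\mu^{-1}$ and the off-diagonal $(0,1)$-component by $\mu$. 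By the standard principle for harmonic maps into semi-Riemannian symmetric spaces adapted to the para-complex/Lorentz setting (see, e.g., \cite{DIT}), flatness of $d+\alpha^\mu$ for all $\mu$ on the para-unit circle is equivalent to the Lorentz harmonic map equation for $f^{-1}N$ into $\S^2_{1+}$ with its induced metric.

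The main obstacle will be step $(3) \Leftrightarrow (4)$: although the DPW machinery is well understood, here one must carefully verify that the specific frame \eqref{eq:framingF} (chosen so as to correspond to the surface $f$ rather than an arbitrary gauge) produces exactly the connection form \eqref{eq:alphamu} without stray $\Uone$-valued gauge terms, and that the para-complex conventions (the role of $\tilde\epsilon=\ip$, the signs in \eqref{eq:identification1}, and the interchange of $x_1$ and $x_2$ caused by the double stereographic projection) match the sign conventions of the Lorentz harmonic map equation on $\S^2_{1+}\subset \Min_{(+,-,+)}$. The remaining routine verification — that the $\mu^{\pm 1}$-coefficients of the flatness of $\alpha^\mu$ actually reduce to the minimal-surface compatibility equations rather than merely implying them — is straightforward but must be done explicitly.
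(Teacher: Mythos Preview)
Your overall scheme matches the paper's, and $(1)\Leftrightarrow(2)$ and $(3)\Leftrightarrow(4)$ are handled along the same lines. The genuine gap is in your implication $(3)\Rightarrow(2)$.

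Matching $\mu^{\pm1}$-coefficients in the zero-curvature equation yields only $H_z=H_{\bar z}=0$, so $H$ is constant; nothing in those equations, nor in the ``original Dirac-potential formula'' $\mathcal U=-\tfrac{H}{2}e^{u/2}+\tfrac{\ip}{4}h$, forces that constant to be zero. A surface with nonzero constant $H$ satisfies the very same Gauss and Codazzi identities you listed, so $\mathcal U$ would retain a nonzero real part. Your earlier normalization $\tilde\epsilon=\ip$ cannot rescue this step: by the sentence following \eqref{eq:potential3}, $\tilde\epsilon=\ip$ is a \emph{consequence} of $H=0$ (together with $h>0$), not an independent hypothesis, so invoking it in the direction $(3)\Rightarrow(2)$ is circular.

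What is missing is the second half of condition~(3): the connection lives on $\D\times\ISU$, so $\alpha^\mu$ must take values in $\isu$ for every $\mu\in\S^1_1$. This is precisely what the paper exploits. Once $H$ is known to be constant, the $\isu$-reality constraint (the $(1,2)$-entry of an $\isu$ matrix is the para-conjugate of the $(2,1)$-entry) applied to the off-diagonal terms of $\alpha^\mu$ forces $\Re\,\mathcal U=-\tfrac{H}{2}e^{u/2}=0$, hence $H=0$. Your proposal treats ``flatness for all $\mu$'' as if it exhausts condition~(3); you must also use the target group, and that algebraic comparison of entries is where $H=0$ actually comes from.
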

\begin{proof}
 The statement (3) holds if and only if 
\begin{equation} \label{eq:flat}
 (U^\mu)_{\bar z} - (V^\mu)_z +[V^\mu, U^\mu] = 0 
\end{equation} 
for all $\mu \in \left\{ e^{\ip t} \,|\, t \in \R \right\}$.
The coefficients of ${\mu}^{-1}, {\mu}^0$ and $\mu$ of \eqref{eq:flat} are as follows: 
 \begin{eqnarray}
 &\mbox{$\mu^{-1}$-part:}\;\; \frac{1}{2} H_{\bar z} e^{u/2} =0, 
 \;\; B_{\bar z}+\frac{1}{2} B H_{\bar z} \tilde \epsilon e^{-w/2} e^{u/2}=0, \label{eq:codazzi1} \\
 & \mbox{$\mu^{0}$-part:}\;\;
 \frac{1}{2} w_{z \bar z} + e^{w} -B \overline{B} e^{-w}
 + \frac{1}{2}(H_{z \bar z}+p ) \tilde \epsilon e^{-w/2} e^{u/2} =0, \label{eq:structure}\\
 &\mbox{$\mu$-part:} \;\;  
 \overline B_z+\frac{1}{2} \overline B H_z \tilde \epsilon e^{-w/2} e^{u/2}=0, 
  \;\;\frac{1}{2} H_z e^{u/2} =0,\label{eq:codazzi2}
 \end{eqnarray}
 where $p$ is $H_z (-w/2 +u/2)_{\bar z}$ for the $(1, 1)$-entry and
 $H_{\bar z} (-w/2 +u/2)_{z}$ for the $(2, 2)$-entry, respectively.
 Since the equation in \eqref{eq:structure} is a structure equation for 
 the immersion $f$, these are always satisfied, which in fact is equivalent
 to \eqref{GaussEquation}.
 
 The equivalence of (1) and (2) is obvious.\\
 We consider $(1) \Rightarrow (3)$.
 Since $f$ is timelike minimal, by Theorem \ref{thm:constant},
 the Abresch-Rosenberg differential $B dz^2$ is para-holomorphic.
 Hence, the equations \eqref{eq:codazzi1}, \eqref{eq:structure} and \eqref{eq:codazzi2} hold.
 Consequently, the statement $(3)$ holds.

 Next we show $(3) \Rightarrow (1)$.
 Assume that $d + \alpha^\lambda$ is flat, that is, \eqref{eq:codazzi1},
 \eqref{eq:structure} and \eqref{eq:codazzi2} are satisfied.
 Then it is easy to see that $H$ is constant.
 Furthermore, since $\alpha^\mu$ is valued in $\isu$, 
 we can derive that the mean curvature $H$ is 0 
 by comparing (2,1)-entry with (1,2)-entry of $\alpha^\mu$.\\
 Finally we consider the equivalence between (3) and (4).
 The condition (3) is \eqref{eq:flat} and it can be rephrased as 
\begin{equation}\label{eq:Lorentzharm}
 d (* \alpha_{1}) + [\alpha_{0} \wedge *\alpha_1]=0,
\end{equation}
 where $\alpha_0 = \alpha^{\prime}_{\mathfrak k} dz 
 + \alpha^{\prime \prime}_{\mathfrak k} d\bar z $ and
 $\alpha_1 = \alpha^{\prime}_{\mathfrak m} dz 
 + \alpha^{\prime \prime}_{\mathfrak m} d\bar z $
 and $\isu$ has been decomposed as $\isu = \mathfrak k + \mathfrak m$
 with 
\[
 \mathfrak k = \left\{ \begin{pmatrix} \ip r & 0\\ 0 & -\ip r\end{pmatrix}\mid r \in \R
\right\}, \quad 
 \mathfrak m = \left\{ \begin{pmatrix} 0& - p - q \ip \\ - p + q \ip  &0
 \end{pmatrix}\mid p, q \in \R
\right\}.
\] 
 Moreover $*$ denotes the Hodge star operator defined by 
\[
 * d z = \ip dz, \quad 
 * d \bar z = -\ip d\bar z.
\]
 It is known that by \cite[Section 2.1]{Melko-Sterling}, the harmonicity condition
 \eqref{eq:Lorentzharm} is 
 equivalent to the Lorentz harmonicity of the normal Gauss map $f^{-1} N = 
 \tfrac{\ip}{2} F \sigma_3 F^{-1}$
 into the symmetric space $\mathbb S^2_1$.
 Thus the equivalence between (3) and (4) follows.
\end{proof}
 From Theorem \ref{thm:main}, we define the followings$:$ 
\begin{Definition} \label{def:extendedframe}
\mbox{}
\begin{enumerate}
 \item 
 For a timelike minimal surface $f$ 
 in $\Nil$ with the frame $F$ in \eqref{eq:framingF}
 of the normal Gauss map,
 let $F^{\mu}$ be a $\ISU$-valued solution 
 of the matrix differential equation 
 $( F^{\mu} )^{-1} d F^{\mu} = \alpha^{\mu}$
 with $F^{\mu} |_{\mu=1} = F$.
 Then $F^{\mu}$ is called an \textit{extended frame} of the timelike minimal surface 
 $f$.
\item Let $\tilde F^{\mu}$ be a $\ISU$-valued solution of 
 $( \tilde F^{\mu} )^{-1} d\tilde F^{\mu} = \alpha^{\mu}$. 
 Then $\tilde F^{\mu}$ is called a \textit{general extended frame}.
\end{enumerate}
\end{Definition}
 Note that an extended frame $F^{\mu}$ and a general extended frame $\tilde F^{\mu}$
 are differ by an initial condition $F_0$, $\tilde F^{\mu}=F_0F^{\mu}$, and 
 $F^{\mu}$ can be explicitly written as
\begin{equation}\label{eq:extendedframe}
 F^\mu = \frac1{\sqrt{\psi_2(\mu) \overline{\psi_2(\mu)} - \psi_1(\mu) \overline{\psi_1(\mu)}}}
\begin{pmatrix}
 \overline{\psi_2(\mu)} & \overline{\psi_1(\mu)}\\
 \psi_1(\mu) & \psi_2(\mu)
\end{pmatrix}, \quad 
\end{equation}
 where $\psi_j(\mu =1)= \psi_j \;(j=1, 2)$ are the original generating spinors of
 a timelike minimal surface $f$.
 For a timelike minimal surface, the Maurer-Cartan form 
 $\alpha^{\mu} = U^{\mu} dz + V^{\mu} d \bar z$ of a general extended frame 
 $\tilde F^{\mu}$ can be written 
 explicitly as follows:
\begin{equation}\label{eq:UVmu}
 U^{\mu} = \begin{pmatrix} \frac12 (\log h)_z& -\frac{\ip}{4} h  \mu^{-1}\\4 {\ip } B h^{-1} \mu^{-1}& -\frac12 (\log h)_z\end{pmatrix}, 
 \quad 
 V^{\mu} = \begin{pmatrix} -\frac12 (\log h)_{\bar z} & - 4 {\ip }\bar B h^{-1}\mu \\ \frac{\ip}{4} h\mu&\frac12 (\log h)_{\bar z}\end{pmatrix}.
\end{equation}

\section{Sym formula and duality between timelike minimal surfaces in three-dimensional Heisenberg group  and timelike CMC surfaces in Minkowski space}\label{sc:Sym}
 In this section we will derive an immersion formula for timelike minimal surfaces in $\Nil$ in terms of the extended frame, the so-called \textit{Sym formula}. Unlike 
 the integral represetnation formula, the so-called \textit{Weierstrass type representation}
 \cite{LMM, SSP, CO, Kondreak}, the Sym-formula will be given by the derivative of the extended frame with respect 
 to the spectral parameter. 

 We define a map $\Xi : \isu \to \mathfrak{nil}_3$ by
\begin{equation}\label{eq:linearisom1}
 \Xi \left(
 x_1 \mathcal{E}_1 + x_2 \mathcal{E}_2 + x_3 \mathcal{E}_3
 \right)
 :=
 x_1e_1 + x_2e_2 + x_3e_3
\end{equation}
where
\begin{equation}\label{eq:E}
 \mathcal{E}_1 = \frac12
 \begin{pmatrix}
 0&-\ip \\
 \ip &0
 \end{pmatrix},
 \quad
 \mathcal{E}_2 =\frac12
 \begin{pmatrix}
 0&-1 \\
 -1 &0
 \end{pmatrix},
 \quad
 \mathcal{E}_3 = \frac12
 \begin{pmatrix}
 \ip&0\\
 0&-\ip
 \end{pmatrix}.
\end{equation}
 Clearly, $\Xi$ is a linear isomorphism but not a Lie algebra isomorphism.
 Moreover, define a map $\Xi_{\mathfrak{nil}} : \isu \to \Nil$ 
 as $\Xi_{\mathfrak{nil}} = \exp \circ \Xi$,
 explicitly 
\begin{equation}
 \Xi_{\mathfrak{nil}} 
 \left(
 \frac12 
 \begin{pmatrix}
 x_3 \ip & -x_2 - x_1 \ip\\ 
 -x_2 + x_1 \ip & -x_3 \ip
 \end{pmatrix}
 \right)
 =
 (x_1, x_2, x_3).
\end{equation}
 Then we can obtain a family of timelike minimal surfaces in $\Nil$
 from an extended frame of a timelike minimal surface.
\begin{Theorem} \label{thm:Sym1}
 Let $\D$ be a simply connected domain in $\C$
 and $F^\mu$ be an extended frame defined in 
 \eqref{eq:extendedframe} for some conformal timelike minimal surface on $\D$ 
 for which the functions $\psi_1, \psi_2$ are given by the formula \eqref{eq:A} with $\epsilon = \ip$
 and the function $h$ defined by \eqref{eq:potential1} has positive values on $\D$.

 Define maps $f_{\Min}$ and $N_{\Min}$ 
 respectively by
 \begin{equation}\label{eq:SymMin}
 f_{\Min}=-\ip \mu (\partial_{\mu} F^{\mu}) (F^{\mu})^{-1} 
 - \frac{\ip}{2} \ad (F^{\mu}) \sigma_3
 \quad \mbox{and} \quad
 N_{\Min}= \frac{\ip}{2} \ad (F^{\mu}) \sigma_3,
 \end{equation}
 where $\sigma_3 = \left( \begin{smallmatrix} 1 &0 \\ 0 & -1 \end{smallmatrix}\right)$.
 Moreover, define a map $f^{\mu}:\mathbb{D}\to \mathrm{Nil}_3$ by%
\begin{equation}\label{eq:symNil}
 f^{\mu}:=\Xi_{\mathrm{nil}}\circ \hat{f}
 \quad \mbox{with} \quad
 \hat f = 
    (f_{\Min})^o -\frac{\ip}{2} \mu (\partial_{\mu}  f_{\Min})^d, 
\end{equation}
 where the superscripts ``$o$'' and ``$d$'' denote the off-diagonal and 
 diagonal part, 
 respectively. 
 Then, for each $\mu \in \S^1_1 = \{e^{\ip t} \in \C \mid t \in \R \}$ 
 the following statements hold$:$
\begin{enumerate}
 \item The map $f^{\mu}$ is a 
 timelike minimal surface (possibly singular) in $\Nil$ and 
 $N_{\Min}$ is the isometric image of the normal Gauss map of $f^{\mu}$.
 Moreover, $f^{\mu} |_{\mu=1}$ and the original surface are same up to a translation.

 \item The map $f_{\Min}$ is a timelike constant mean curvature 
 surface with mean curvature $H=1/2$ in $\Min$
 and $N_{\Min}$ is the spacelike unit normal vector of $f_{\Min}$.
\end{enumerate}
\end{Theorem}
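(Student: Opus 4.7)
The proof naturally splits into two parallel computations: establishing the Sym–Bobenko formula for $f_{\Min}$ as a timelike CMC surface in $\Min$, and then showing the off-diagonal/$\mu$-diagonal combination $\hat f$ produces a timelike minimal surface in $\Nil$ via $\Xi_{\mathfrak{nil}}$. The organizing idea is that the Maurer–Cartan form $\alpha^\mu$ from \eqref{eq:UVmu} has only $\mu^{\pm 1}$ dependence in the off-diagonal entries, so $\mu\partial_\mu\alpha^\mu$ takes a very simple form.

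For statement (2), I would first compute $df_{\Min}$ directly. Using $(F^\mu)^{-1}dF^\mu=\alpha^\mu$, one gets $d\bigl[-\ip\mu(\partial_\mu F^\mu)(F^\mu)^{-1}\bigr] = -\ip\mu\,\ad(F^\mu)(\partial_\mu\alpha^\mu)$, while $d\bigl[-\tfrac{\ip}{2}\ad(F^\mu)\sigma_3\bigr] = -\tfrac{\ip}{2}\ad(F^\mu)[\alpha^\mu,\sigma_3]$. Combining these and reading off the $\mu^{\pm 1}$ coefficients from \eqref{eq:Umu}–\eqref{eq:Vmu} gives $df_{\Min}=\ad(F^\mu)\bigl(A\,dz+\bar A\,d\bar z\bigr)$ for an explicit $\isu$-valued matrix $A$ with purely off-diagonal entries. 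Using the inner product $\langle X,Y\rangle = 2\operatorname{tr}(XY)$ and the identification \eqref{eq:identification1} one then verifies (i) the induced metric is Lorentzian and conformal with conformal factor $e^u$, (ii) $N_{\Min}=\tfrac{\ip}{2}\ad(F^\mu)\sigma_3$ is a spacelike unit normal orthogonal to $df_{\Min}$, and (iii) expanding the second fundamental form $\langle\nabla df_{\Min},N_{\Min}\rangle$ gives mean curvature exactly $H=1/2$, by the standard normalization of Sym formulas with $\ISU$-valued frames.

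For statement (1), the plan is to exploit the decomposition $\hat f = (f_{\Min})^o - \tfrac{\ip}{2}\mu(\partial_\mu f_{\Min})^d$. The off-diagonal part of $f_{\Min}$, transported through $\Xi_{\mathfrak{nil}}$ and \eqref{eq:identification1}, yields the horizontal coordinates $(x_1,x_2)$ of the candidate Nil-surface, while the $\mu\partial_\mu$ of the diagonal entry encodes the vertical coordinate $x_3$. The essential verification is that the Maurer–Cartan form $(f^\mu)^{-1}df^\mu$ computed in the Heisenberg group (including the cocycle $\tau(x_2\,dx_1-x_1\,dx_2)$ in $\omega_\tau$) reproduces exactly $\Phi^\mu\,dz+\overline{\Phi^\mu}\,d\bar z$, where $\Phi^\mu$ is given by the generating-spinor formula \eqref{eq:A} with $\psi_j$ replaced by $\psi_j(\mu)$ from \eqref{eq:extendedframe}. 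Once this identification is established, the conformality conditions \eqref{eq:condition} are immediate and the structure equation \eqref{eq:minimalcondition} holds because $F^\mu$ satisfies \eqref{eq:Laxpair2} with the Dirac potential $\tilde\epsilon e^{w/2}$ purely imaginary, i.e., by the equivalence of conditions (2) and (4) in Theorem \ref{thm:main}. That $N_{\Min}$ is the isometric image of the normal Gauss map of $f^\mu$ is then \eqref{eq:A++} composed with \eqref{eq:identification1}. The initial value claim $f^\mu|_{\mu=1}\equiv f$ (up to translation) follows by noting $F^\mu|_{\mu=1}=F$ together with the explicit generating-spinor form of $\Phi$.

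The main obstacle will be the bookkeeping that identifies $-\tfrac{\ip}{2}\mu(\partial_\mu f_{\Min})^d$ with the Heisenberg vertical coordinate modulo the cocycle, i.e., verifying the scalar identity
\begin{equation*}
 d\bigl[-\tfrac{\ip}{2}\mu(\partial_\mu f_{\Min})^d\bigr]^d = (\Phi^\mu)_3\,dz+\overline{(\Phi^\mu)_3}\,d\bar z - \tfrac{1}{2}\bigl(x_2\,dx_1-x_1\,dx_2\bigr),
\end{equation*}
where $(\Phi^\mu)_3=2\psi_1(\mu)\overline{\psi_2(\mu)}$. This requires carefully matching a second-order spectral-parameter derivative of $\ad(F^\mu)\sigma_3$ against the quadratic Heisenberg correction; once it is verified, every other assertion in the theorem drops out algebraically from the Sym formula in $\Min$ and Theorem \ref{thm:main}.
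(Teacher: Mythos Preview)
Your proposal is correct and follows the same overall architecture as the paper: compute $\partial_z f_{\Min}$ from the $\mu^{\pm1}$-structure of $\alpha^\mu$, then show $(f^\mu)^{-1}\partial_z f^\mu=\phi_1(\mu)e_1+\phi_2(\mu)e_2+\phi_3(\mu)e_3$ with the spinor parametrization \eqref{eq:A}, and read off conformality and minimality from the (unchanged) Dirac potential.

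The one place where the paper is sharper than your outline is precisely your ``main obstacle.'' Rather than confronting a second $\mu$-derivative of $\ad(F^\mu)\sigma_3$ head-on, the paper swaps the order of differentiation, $\partial_z\bigl(-\tfrac{\ip}{2}\mu\,\partial_\mu f_{\Min}\bigr)=-\tfrac{\ip}{2}\mu\,\partial_\mu(\partial_z f_{\Min})$, and then uses that $\mu(\partial_\mu F^\mu)(F^\mu)^{-1}$ is, by the very definition of $f_{\Min}$, proportional to $f_{\Min}+N_{\Min}$. This converts the spectral derivative into the commutator $[f_{\Min}+N_{\Min},\tfrac12\partial_z f_{\Min}]$. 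The $N_{\Min}$-piece gives $[N_{\Min},\tfrac12\partial_z f_{\Min}]=\tfrac{\ip}{2}\partial_z f_{\Min}$, which corrects the diagonal of $\partial_z f_{\Min}$ from $\ip\phi_3(\mu)\mathcal E_3$ to $\phi_3(\mu)\mathcal E_3$, while the $f_{\Min}$-piece yields $[f_{\Min},\tfrac12\partial_z f_{\Min}]^d=\tfrac12\bigl(\phi_2\int\phi_1\,dz-\phi_1\int\phi_2\,dz\bigr)\mathcal E_3$, which is exactly the Heisenberg cocycle. So the vertical-coordinate identity you isolate falls out in two lines, with no explicit second spectral derivative. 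The paper also records (via a diagonal gauge $F^\mu\mapsto F^\mu\operatorname{diag}(\mu^{-1/2},\mu^{1/2})$) that $\psi_2(\mu)\overline{\psi_2(\mu)}-\psi_1(\mu)\overline{\psi_1(\mu)}$ is $\mu$-independent, which is what makes the Dirac potential, and hence $H=0$, persist across the family; you use this implicitly when you invoke Theorem~\ref{thm:main}, so it is worth stating.
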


\begin{proof}
 Because of the continuity of the extended frame with respect to the parameter $\mu$,
 $F^\mu$ can be represented in the form of
\[
 F^\mu = \frac1{\sqrt{\psi_2(\mu) \overline{\psi_2(\mu)} - \psi_1(\mu) \overline{\psi_1(\mu)}}}
\begin{pmatrix}
 \overline{\psi_2(\mu)} & \overline{\psi_1(\mu)}\\
 \psi_1(\mu) & \psi_2(\mu)
\end{pmatrix}
\]
 for some $\C$-valued functions $\psi_1(\mu)$ and $\psi_2(\mu)$ with $\psi_k(1) = \psi_k$ for $k=1, 2$.
 Since $F^\mu$ satisfies the equations
 \[F^\mu_z = F^\mu U^\mu, \quad F^\mu_{\bar z} = F^\mu V^\mu,\]
 with \eqref{eq:Umu}, \eqref{eq:Vmu} and $H=0$,
 by considering the gauge transformation
 \[ F^\mu \mapsto F^\mu
 \begin{pmatrix}
 \mu^{-1/2}&0\\
 0&\mu^{1/2}
 \end{pmatrix},\]
 it can be shown that
 the deformation with respect to parameter $\mu$ does not change the Dirac potential,
 that is,
 $\psi_2(\mu) \overline{\psi_2(\mu)} - \psi_1(\mu) \overline{\psi_1(\mu)}$ is independent of $\mu$.
 
 Since $F^\mu$ is $\ISU$-valued,
 a straightforward computation shows that
 $\ip \mu (\partial _{\mu} F^\mu) (F^\mu)^{-1}$
 and $N_{\Min}$ take values in $\isu$.
 Hence $f_{\Min}$ is a $\isu$-valued map.
 Therefore,
 the diagonal entries of $\ip \mu (\partial_{\mu} f_{\Min})$ take purely imaginary values
 and the trace of $\ip \mu (\partial_{\mu} f_{\Min})$ vanishes.
 Thus $\ip \mu (\partial_{\mu} f_{\Min})^d$ takes $\isu$ values.
 
 Next we compute $\partial_z \hat f$.
 By the usual computations we obtain
\begin{eqnarray}
 \partial_z f_{\Min}
 &=&
 \partial_z 
 \left( -\ip \mu (\partial_{\mu} F^\mu) (F^\mu)^{-1}) 
 - \frac{\ip}{2} \ad (F^{\mu}) \sigma_3\right) \notag\\
 &=&
 \ad(F^\mu) 
 \left( -\ip \mu (\partial_\mu U^\mu) 
 - \frac {\ip}2 
 \left[U^\mu, 
 \sigma_3
 \right]
 \right)\notag\\
 &=&
 -2\mu^{-1}e^{w/2} \ad(F^\mu) \label{eq:zfmin}
 \sigma_3\\
 &=&
 \mu^{-1}
\begin{pmatrix}
 \psi_1(\mu) \overline{\psi_2(\mu)} & - (\overline{\psi_2(\mu)})^2\\
 (\psi_1(\mu))^2 & -\psi_1(\mu) \overline{\psi_2(\mu)}
\end{pmatrix}\notag.
\end{eqnarray}
Then we have
\begin{eqnarray}
 \partial_z f_{\Min} 
 &=& \frac12
\begin{pmatrix}
 \phi_3(\mu) & -\phi_2(\mu) - \ip \phi_1(\mu) \\
 -\phi_2(\mu) + \ip \phi_1(\mu) & -\phi_3(\mu)
\end{pmatrix} \notag\\
&=&\label{eq:f_z}
 \phi_1(\mu) \mathcal{E}_1 + \phi_2(\mu) \mathcal{E}_2 + \ip \phi_3(\mu) \mathcal{E}_3 \label{eq:fz}
\end{eqnarray}
with
\[
 \phi_1(\mu) = \mu^{-1} \ip \left( (\overline{\psi_2(\mu)})^2 + (\psi_1(\mu)^2) \right),
 \quad
 \phi_2(\mu) = \mu^{-1} \left( (\overline{\psi_2(\mu)})^2 - (\psi_1(\mu)^2) \right)
\]
and\[ \phi_3(\mu) = \mu^{-1} 2 \psi_1(\mu) \overline{\psi_2(\mu)}.
\]
By using \eqref{eq:zfmin}, we can compute 
\begin{eqnarray*}
 \partial_z \left( -\frac{\ip}2 \mu \left( \partial_{\mu} f_{\Min} \right)  \right)
 &=&
 -\frac{\ip}2 \mu \partial_{\mu} (\partial_z f_{\Min})\\
 &=&
 \ip e^{w/2} \mu (-\mu^{-2}) \ad(F^\mu) 
\begin{pmatrix}
 0&1\\
 0&0
\end{pmatrix}\\
 && \,+ \ip e^{w/2} \left[ \ip \mu^{-1} (-f_{\Min} - N_{\Min}), -\frac12 \mu e^{-w/2} \partial_z f_{\Min} \right]\\
 &=&
 \frac{\ip}2 \partial_z f_{\Min}
 + \left[f_{\Min} + N_{\Min}, \frac12 \partial_z f_{\Min} \right].
\end{eqnarray*}
Using \eqref{eq:zfmin}, we have
\[
\left[f_{\Min}, \frac12 \partial_z f_{\Min} \right]^d
=\frac12 \left( \phi_2 (\mu) \int \phi_1 (\mu) dz - \phi_1 (\mu) \int \phi_2 (\mu) dz \right) \mathcal E_3
\]
and
\[
\left[ N_{\Min}, \frac12 \partial_z f_{\Min} \right]
= \frac{\ip}2 \partial_z f_{\Min}.
\] 
 Consequently, we have
\[
 \partial_z \left( -\frac{\ip}2 \mu \left( \partial_{\mu} f_{\Min} \right)  \right)^d
 =
 \left(
 \phi_3(\mu) + \frac12 \left( \phi_2 (\mu) \int \phi_1 (\mu) dz - \phi_1 (\mu) \int \phi_2 (\mu) dz\right)
 \right)
 \mathcal E_3.
\]
 Thus we obtain
\begin{eqnarray*}
 \partial_z \hat f 
 &=& 
 \partial_z (f_{\Min})^o
 + \partial_z \left( -\frac{\ip}2 \mu \left( \partial_{\mu} f_{\Min} \right)  \right)^d\\
 &=&
 \phi_1(\mu) \mathcal E_1 + \phi_2(\mu) \mathcal E_2
 +
 \left(
 \phi_3(\mu) + \frac12 \left( \phi_2 (\mu) \int \phi_1 (\mu) dz - \phi_1 (\mu) \int \phi_2 (\mu) dz\right)
 \right)
 \mathcal E_3
\end{eqnarray*}
 and then
\begin{equation}\label{eq:conclusion1}
 (f^{\mu})^{-1} (\partial_z f^{\mu})
 = \phi_1(\mu) e_1 + \phi_2(\mu) e_2 + \phi_3(\mu) e_3.
\end{equation}

 The equation \eqref{eq:conclusion1} means that,
 for $\mu = e^{\ip t}$ with sufficiently small $t \in \R$,
 the map $f^\mu$ is conformal with the conformal parameter $z$
 and the conformal factor $4(\psi_2(\mu) \overline{\psi_2(\mu)} + \psi_1(\mu) \overline{\psi_1(\mu)})^2$. 
 To complete the proof of $(1)$ we check the mean curvature and the normal Gauss map of $f^{\mu}$.
 Since the Dirac potential of $f^\mu$ is same with the one of the original timelike minimal surface,
 the mean curvature of $f^\mu$ is zero
 for $\mu$ with $\psi_2(\mu) \overline{\psi_2(\mu)} + \psi_1(\mu) \overline{\psi_1(\mu)}$
 nowhere vanishing on $\D$.
 Using the map $\mathfrak{nil}_3 \supset \widetilde{\mathbb{S}}^2_1 \to \mathbb{S}^2_1 \subset \Min_{(+,-,+)}$
 defined in Section \ref{gaussmap},
 the normal Gauss map of $f^\mu$ is converted into $N_{\Min}$.
 To prove $(2)$, see Appendix \ref{timelikemin}.
\end{proof}

\begin{Remark}
 In other cases, $h<0$ or $\epsilon = - \ip$,
 we can get the same result with Theorem \ref{thm:Sym1}
 by adapting the identification \eqref{eq:identification1} between $\isu$ and $\Min_{(+,-,+)}$
 and the linear isomorphism \eqref{eq:linearisom1} from $\isu$ to $\mathfrak{nil}_3$
 precisely.
 For example,
 when the original timelike minimal surface has $h>0$ and $\epsilon = - \ip$,
 we should replace the identification \eqref{eq:identification1}
 and the linear isomorphism \eqref{eq:linearisom1}, respectively,
 into
\[
\isu \ni \frac12 
\begin{pmatrix}
  r \ip& - (-p - q \ip) \\
 - (-p + q \ip) & - r \ip
\end{pmatrix}
\longleftrightarrow
(p, q, r) \in \Min_{(+, -, +)}
\]
 and
\[
 \Xi \left(
 x_1 \mathcal{E}_1 + x_2 \mathcal{E}_2 + x_3 \mathcal{E}_3
 \right)
 :=
 -x_1e_1 - x_2e_2 + x_3e_3
\]
where $\mathcal{E}_j\;(j=1, 2, 3)$ is 
 defined in \eqref{eq:E}.
\end{Remark}
  In Theorem \ref{thm:Sym1}, we recover a given timelike minimal surface in 
 $\Nil$ in terms of generating spinors and Sym formula.
 More generally, we can construct timelike minimal surfaces
 using  a non-conformal harmonic map into $\mathbb S^2_1$.
 As we have seen in the proof of Theorem \ref{thm:Sym1}
 the harmonicity of a map $N$ into $\mathbb S^2_1$
 in terms of 
\[
 d(* \alpha_1) + [\alpha_0 \wedge *\alpha_1] =0,
\]
 where $\alpha$ is the Maurer-Cartan form of 
 the frame $\tilde{F} : \D \to \ISU$ of $N$ and 
 moreover, $\alpha = \alpha_0 + \alpha_1$ is the representation
 in accordance with the decomposition $\isu = \mathfrak k + \mathfrak m$.
 Denote the $(1,0)$-part and $(0,1)$-part of $\alpha_1$ by
 $\alpha_{1} {'}$ and $\alpha_{1} {''}$,
 and define a $\isu$-valued $1$-form $\alpha^{\mu}$ for each $\mu \in S^1_1$ by
\[
 \alpha^{\mu} := \alpha_0 + \mu^{-1} \alpha_{1} {'} + \mu \alpha_1 {''}.
\]
 Then $\alpha^{\mu}$ satisfies 
\[
 d \alpha^{\mu} + \frac12 [\alpha^{\mu} \wedge \alpha^{\mu}] =0
\]
 for all $\mu \in S^1_1$,
 and thus there exists $\tilde{F}^{\mu} : \D \to \ISU$
 which is smooth with respect to the parameter $\mu$
 and satisfies $(\tilde{F}^{\mu})^{-1} d \tilde{F}^{\mu} = \alpha^{\mu}$ for each $\mu$.
 Thus $\tilde F^{\mu}$ is the extended frame of the harmonic map $N$.
 As well as Theorem \ref{thm:Sym1} we can show the following theorem:
\begin{Theorem} \label{thm:Sym2}
 Let $\tilde{F}^\mu : \D \to \ISU$ be the extended frame of 
 a harmonic map $N$ into the $\mathbb S^2_1$.
 Assume that the coefficient function $a$ of $(1, 2)$-entry of $\alpha_1 {'}$ satisfies 
 $a \overline a <0$ on $\D$.
 Define the maps $\tilde f_{\Min}$, $\tilde N_{\Min}$ 
 and $\tilde f^{\mu}$ respectively by the Sym formulas in \eqref{eq:SymMin} 
 and \eqref{eq:symNil} where $F^{\mu}$ 
 replaced by $\tilde F^{\mu}$.
 Then, under the identification \eqref{eq:identification1}of $\isu$ and $\Min$
 and the linear isomorphism \eqref{eq:linearisom1} from $\isu$ to $\mathfrak {nil}_3$,
 for each $\mu = e^{\ip t} \in \mathbb{S}^1_1$ the following 
 statements hold$:$
\begin{enumerate}
 \item The map $\tilde f_{\Min}$ is a timelike constant mean curvature 
 surface with mean curvature $H=1/2$ in $\Min$ with the first fundamental form 
 $I=-16 a \overline a dz d\bar z$
 and $\tilde N_{\Min}$ is the spacelike unit normal vector of $\tilde f_{\Min}$.

 \item The map $\tilde f^{\mu}$ is a timelike minimal surface  
 (possibly singular) in $\Nil$
 and 
 $N_{\Min}$ is the isometric image of the normal Gauss map of $f^{\mu}$.
 In particular, $\tilde{F}^\mu$ is an extended frame of
 some timelike minimal surface $f$. 
\end{enumerate}
\end{Theorem}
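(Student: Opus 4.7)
The plan is to reduce Theorem \ref{thm:Sym2} to the computations already carried out in Theorem \ref{thm:Sym1} by showing that any extended frame $\tilde F^\mu$ of a non-conformal Lorentz harmonic map into $\mathbb{S}^2_1$ subject to $a\bar a<0$ is automatically the extended frame of some timelike minimal surface in $\Nil$. Once that identification is in place, both conclusions will follow from the Sym-formula machinery of Theorem \ref{thm:Sym1} (for (2)) and from the standard DIT theory of timelike CMC surfaces in $\Min$ recalled in Appendix \ref{timelikemin} (for (1)).

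First I would unpack $\alpha^\mu=\alpha_0+\mu^{-1}\alpha_1'+\mu\alpha_1''$ in matrix form. Because $\tilde F^\mu\in \ISU$, the Maurer-Cartan form takes values in $\isu$, so $\alpha_0$ is diagonal in $\mathfrak k^{\mathbb C'}$ and $\alpha_1=\alpha_1'+\alpha_1''$ is off-diagonal. Writing the $(1,2)$-entry of $\alpha_1'$ as $a\,dz$, the hypothesis $a\bar a<0$ together with \eqref{eq:exp} guarantees that $a$ (or $-a$) can be written as $-\tfrac{\ip}{4}h$ for a real, nowhere-vanishing function $h$. Setting $B$ to be a suitable multiple of the $(2,1)$-entry of $\alpha_1'$ and reading off the diagonal terms of $\alpha_0$, one checks directly that $\alpha^\mu$ matches the form \eqref{eq:UVmu} of the Maurer-Cartan form of a general extended frame of a timelike minimal surface, with mean curvature $H=0$ built in because the $(1,2)$-entry is purely imaginary.

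Having identified the structure, I would next express $\tilde F^\mu$ in the form \eqref{eq:extendedframe} with spinor functions $\psi_j(\mu)$ read off from its columns. The $\ISU$-condition fixes the conformal factor $\psi_2(\mu)\overline{\psi_2(\mu)}-\psi_1(\mu)\overline{\psi_1(\mu)}$, and the Lax-pair system \eqref{eq:Laxpair} with $H=0$ is satisfied for each $\mu$ by construction. This proves the ``in particular'' part of (2): $\tilde F^\mu$ is an extended frame of the timelike minimal surface recovered by specialising $\mu=1$. The computation of $\partial_z\hat f$ carried out in the proof of Theorem \ref{thm:Sym1} then goes through verbatim and yields \eqref{eq:conclusion1}, so $\tilde f^\mu=\Xi_{\mathrm{nil}}\circ\hat f$ is conformal timelike minimal in $\Nil$ with normal Gauss map isometric to $\tilde N_{\Min}$.

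For (1), I would apply the Sym formula \eqref{eq:SymMin} for timelike CMC surfaces in $\Min$ exactly as in Appendix \ref{timelikemin}. A direct computation parallel to \eqref{eq:zfmin} gives
\[
\partial_z\tilde f_{\Min}=-2\mu^{-1}e^{w/2}\,\mathrm{Ad}(\tilde F^\mu)\sigma_3,
\]
from which $g(\partial_z\tilde f_{\Min},\partial_z\tilde f_{\Min})=0$ (conformality) and $g(\partial_z\tilde f_{\Min},\partial_{\bar z}\tilde f_{\Min})=-8a\bar a$ follow, yielding the first fundamental form $I=-16a\bar a\,dz\,d\bar z$, which is Lorentzian precisely because $a\bar a<0$. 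The mean curvature $H=1/2$ and the fact that $\tilde N_{\Min}=\tfrac{\ip}{2}\mathrm{Ad}(\tilde F^\mu)\sigma_3$ is the spacelike unit normal are then consequences of the general DIT Sym formula, and this finishes (1).

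The main obstacle will be the first step: making sure that the harmonicity of $N$, combined with the sign condition $a\bar a<0$, really does force $\alpha^\mu$ into the very specific shape \eqref{eq:UVmu} that characterises a timelike minimal extended frame in $\Nil$, with no extra freedom beyond an initial $\ISU$-gauge. The delicate points are (i) selecting the correct branch of the para-complex exponential in \eqref{eq:exp} so that the $(1,2)$-entry of $\alpha_1'$ comes out purely imaginary rather than merely para-imaginary, and (ii) reconciling the spinor representation \eqref{eq:extendedframe} with the possibly non-unique square roots of $\psi_2(\mu)\overline{\psi_2(\mu)}-\psi_1(\mu)\overline{\psi_1(\mu)}$; both can be absorbed into the initial condition $F_0$ in Definition \ref{def:extendedframe}.
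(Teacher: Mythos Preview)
Your approach is essentially the same as the paper's: normalize so that the $(1,2)$-entry $a$ of $\alpha_1'$ becomes purely imaginary, set $h=-4\ip a$, read off generating spinors from the entries of $\tilde F^\mu$, and then invoke the Sym-formula computations of Theorem~\ref{thm:Sym1} and Appendix~\ref{timelikemin}. The paper's proof is terser---it simply gauges, defines $\tilde\psi_1,\tilde\psi_2$ from the second row of $\tilde F^\mu$, and checks that $h=2(\tilde\psi_2\overline{\tilde\psi_2}-\tilde\psi_1\overline{\tilde\psi_1})$---while you spell out more of why $\alpha^\mu$ then matches \eqref{eq:UVmu}.

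One correction to your last paragraph: the step that makes $a$ purely imaginary is neither a branch choice in \eqref{eq:exp} nor something absorbed into a \emph{constant} initial condition $F_0$. It is a pointwise right gauge $\tilde F^\mu\mapsto \tilde F^\mu k$ by a $\Uone$-valued \emph{function} $k=\operatorname{diag}(e^{\ip\theta},e^{-\ip\theta})$, under which $a\mapsto a\,e^{-2\ip\theta}$; the hypothesis $a\bar a<0$ is precisely what guarantees such a $\theta$ can be chosen smoothly. Once this gauge is applied, the diagonal of $\alpha_0$ is forced to be $\pm\tfrac12(\log h)_z$ by the $\mu^{\pm1}$-parts of the flatness equation, so your claim that $\alpha^\mu$ matches \eqref{eq:UVmu} is indeed correct---but it goes through this $z$-dependent gauge, not around it.
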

\begin{proof}
 To prove the theorem, one needs to define generating spinors properly:
 After gauging the extended frame the upper right corner of $\alpha_1^{\prime}$
 takes values in purely imaginary, that is $a$ can be assumed to be purely imaginary.
 Define $h$ by $h = -4 \ip a$, and $\tilde \psi_1$ and $\tilde \psi_2$ by 
putting 
\[
 \tilde F_{21} = \sqrt{2} \tilde \psi_1 h^{-1/2}, \quad 
 \tilde F_{22} = \sqrt{2} \tilde \psi_2 h^{-1/2},
\]
 respectively. Then $\tilde \psi_1$ and $\tilde \psi_2$ are 
 generating spinors of the map $\tilde f^{\mu}$ and its angle function is exactly 
 $h = 2 \left(\tilde \psi_2 \overline{\tilde \psi_2} -\tilde \psi_1 \overline{\tilde \psi_1}\right)$.
\end{proof}

\section{Generalized Weierstrass type representation 
for timelike minimal surfaces in $\Nil$} \label{sc:Generalized Weierstrass type representation}
 In this section we will give a construction of timelike minimal surfaces in $\Nil$
 in terms of the para-holomorphic data, the so-called \textit{generalized Weierstrass 
 type representation.}  The heart of the construction is based on two loop group 
 decompositions, the so-called \textit{Birkhoff} and \textit{Iwasawa} decompositions, 
 which are reformulations of \cite[Theorem 2.5]{DIT}, see also \cite{PreS:LoopGroup},
 in terms of the para-complex structure.

\subsection{From minimal surfaces to normalized potentials: The Birkhoff decomposition}
 Let us recall the hyperbola on $\C$:
\begin{equation}
 \S_1^1=\{ \mu \in \C\mid \mu \bar \mu =1, \, \Re \mu >0 \}.
\end{equation}
  Since an extended frame $F^{\mu}$ is analytic on $\S_1^1$ (in fact it is 
 analytic on $\C \setminus \{x (1\pm \ip)\mid x \in \R\}$), 
 it is natural to introduce the following loop groups$:$
\begin{align*}
\LSLC &= 
 \left\{
 g : \S_1^1 \to \SLC \mid 
\begin{array}{l}
 g =\cdots+ g_{-1} \mu^{-1} + g_0 + g_1 \mu^{1} + \cdots\\
\mbox{and
  $g(- \mu) = \sigma_3 g(\mu)\sigma_3$}
\end{array} 
 \right\}, \\
\LSLCP &= 
 \left\{ g \in  \LSLC
 \mid g = g_0 + g_1 \mu^1 + \cdots
 \right\}.
\end{align*}
 On the one hand, we define 
\[
 \LSLCN= 
 \left\{ g \in  \LSLC 
 \mid g= g_0 + g_{-1} \mu^{-1} + \cdots
 \right\}.
\]
 We now use the lower subscript $*$ for normalization
  at $\mu =0$ or $\mu = \infty$ by identity, that is 
 \[
 \Lambda^{\prime \pm}_* {\rm SL}_{2} \C_{\sigma}= \left\{
 g \in \Lambda^{\prime \pm} {\rm SL}_{2} \C_{\sigma} \mid
\mbox{$g(0)=\id$ for 
 $\LSLCP$ or $g(\infty)=\id$ for 
 $\LSLCN$}
\right\}.
\]
 Moreover, we define the loop group of the special 
para-unitary group $\ISU:$
\[
 \LISU =
\left\{
 g \in   \LSLC  \mid 
 \sigma_3 \left(\overline{g(1/\bar \mu)}^{T}\right)^{-1} \sigma_3 = g(\mu)
 \right\}.
\]
 Further, let us introduce the following subgroup
\[
 \Uone= \left\{ \di( e^{\ip \theta}, e^{- \ip \theta}) 
\mid \theta \in \R \right\}.
\]
 The fundamental decompositions for the above loop groups are
 Birkhoff and Iwasawa decompositions as follows$:$
\begin{Theorem}[Birkhoff and Iwasawa decompositions]\label{thm:BIdecomposition}
 The 
 loop group $\LSLC$ can be decomposed as follows$:$
\begin{enumerate}
 \item[(1)]{\rm Birkhoff decomposition}$:$
 The multiplication maps 
\begin{equation}\label{eq:Birkhoff}
 \LSLCNN \times 
 \LSLCP \to  \LSLC
\quad \mbox{and}\quad 
 \LSLCPN\times 
 \LSLCN\to \LSLC
\end{equation}
 are diffeomorphism onto the open dense subsets of $\LSLC$,
 which will be called the {\rm big cells} of $\LSLC$.
 \item[(2)]{\rm Iwasawa decomposition}$:$
 The multiplication map 
 \begin{equation}\label{eq:Iwasawa}
 \LISU \times   
 \LSLCP\to \LSLC
 \end{equation}
 is an diffeomorphism onto the open dense subset of $\LSLC$,
 which will be called the {\rm big cell} of $\LSLC$.
\end{enumerate}
\end{Theorem}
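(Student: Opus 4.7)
The plan is to exploit the idempotent decomposition of $\C$ in order to reduce both assertions to the corresponding statements for the real loop group $\LSLR$ established in \cite{DIT} (see also \cite{PreS:LoopGroup}). Recall the para-complex idempotents $e_{\pm}=\tfrac{1}{2}(1\pm\ip)$ satisfy $e_{\pm}^{2}=e_{\pm}$, $e_{+}e_{-}=0$, $e_{+}+e_{-}=1$ and $\overline{e_{\pm}}=e_{\mp}$. They give an isomorphism of $\R$-algebras $\C\cong\R\oplus\R$ via $ae_{+}+be_{-}\leftrightarrow(a,b)$, which applied entrywise produces the Lie group isomorphism $\SLC\cong\SLR\times\SLR$: any $A\in\SLC$ decomposes as $A=A^{+}e_{+}+A^{-}e_{-}$, and $\det A=(\det A^{+})e_{+}+(\det A^{-})e_{-}=1$ forces $A^{\pm}\in\SLR$.

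Next I would transport this splitting to the loop level. Writing the spectral parameter as $\mu=\lambda e_{+}+\eta e_{-}$, idempotency gives $\mu^{n}=\lambda^{n}e_{+}+\eta^{n}e_{-}$, so a twisted Laurent expansion in $\mu$ splits as a pair of twisted Laurent expansions in the independent variables $\lambda$ and $\eta$. Hence $\LSLC\cong\LSLR\times\LSLR$, with the twisting by $\sigma_{3}$ holding factorwise. Under this isomorphism the positive/negative subgroups split as products, $\LSLCP\cong\LSLRP\times\LSLRP$ and $\LSLCN\cong\LSLRN\times\LSLRN$, and likewise for the $*$-normalized subgroups $\LSLCPN$ and $\LSLCNN$. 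Para-complex conjugation swaps $e_{+}\leftrightarrow e_{-}$, so the reality condition $\sigma_{3}\bigl(\overline{g(1/\bar\mu)}^{T}\bigr)^{-1}\sigma_{3}=g(\mu)$ couples the two factors through $g^{-}(\eta)=\sigma_{3}\bigl(g^{+}(\eta^{-1})^{T}\bigr)^{-1}\sigma_{3}$; thus $\LISU$ embeds as a twisted-diagonal copy of $\LSLR$ inside $\LSLR\times\LSLR$.

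With these identifications, part (1) follows by factoring each component $g^{\pm}=n^{\pm}p^{\pm}$ through the Birkhoff decomposition of $\LSLR$ and recombining; the product of the two real big cells yields an open dense subset of $\LSLC$. For (2), given $g=(g^{+},g^{-})$, I would apply the real Iwasawa decomposition $g^{+}=h^{+}p^{+}$ in the first factor, define $h^{-}$ from $h^{+}$ via the unitary coupling above, and set $p^{-}:=(h^{-})^{-1}g^{-}$; the big cell is precisely the open dense locus where this $p^{-}$ lies in $\LSLRP$. Smoothness and uniqueness then follow factorwise from the corresponding properties of the real Birkhoff and Iwasawa factorizations and the implicit function theorem.

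The main obstacle is to verify that the Iwasawa factorization on the $e_{+}$-factor together with the para-unitary coupling forces the $e_{-}$-factor to lie in $\LSLRP$ on an open dense subset, i.e.\ that the intersection of the product big cell with the image of the unitary-coupled pairs is itself open and dense in $\LSLC$. Here the precise form of para-complex conjugation, which exchanges the two $\R$-factors, is essential; this is the step where the reduction to two copies of the real Iwasawa decomposition of \cite{DIT} pays off most directly, since the coupling is precisely compatible with the involution already used to cut $\LSLR$ out of its complexification.
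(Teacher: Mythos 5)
Your reduction of $\LSLC$ to two copies of the real loop group via the idempotents $e_{\pm}=\tfrac12(1\pm\ip)$ is the same device the paper uses (there written with $\ell=\tfrac12(1+\ip)$), and part (1) goes through as you describe: under the untwisted splitting $g=g^{+}e_{+}+g^{-}e_{-}$ the subgroups $\LSLCP$, $\LSLCN$ and their normalized versions split factorwise, so both Birkhoff statements follow from the Birkhoff decomposition of $\LSLR$ in \cite{DIT} applied in each factor, and products of open dense sets are open dense.

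The gap is in part (2). The step ``apply the real Iwasawa decomposition $g^{+}=h^{+}p^{+}$ in the first factor'' has no content: \cite{DIT} provides no Iwasawa-type factorization of a single copy of $\LSLR$ against $\LSLRP$, and you do not specify any ``unitary'' subgroup of $\LSLR$ in which $h^{+}$ should lie; with $h^{+}$ unconstrained the equation $g^{+}=h^{+}p^{+}$ is vacuous, and the whole difficulty is to choose $h^{+}$ so that \emph{simultaneously} $(h^{-})^{-1}g^{-}\in\LSLRP$ --- exactly the point you flag as ``the main obstacle'' and leave unresolved, so the key step is missing. The repair is to recombine the two factors rather than factor them separately. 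Writing the para-unitary coupling as the involution $g\mapsto g^{\sharp}$, $g^{\sharp}(\lambda)=\sigma_3\bigl(g(\lambda^{-1})^{T}\bigr)^{-1}\sigma_3$, your system $g^{+}=h\,p^{+}$, $g^{-}=h^{\sharp}p^{-}$ (with $p^{+}$ positive in $\lambda$, $p^{-}$ positive in $\eta$) is equivalent, after applying $\sharp$ to the second equation, to $\bigl(g^{+},(g^{-})^{\sharp}\bigr)=(h,h)\bigl(p^{+},(p^{-})^{\sharp}\bigr)$ with $(p^{-})^{\sharp}\in\LSLRN$, i.e.\ to the Iwasawa decomposition $\Delta(\LSLR\times\LSLR)\times\LSLRP\times\LSLRN\to\LSLR\times\LSLR$ of Theorem 2.5 in \cite{DIT}, which in turn is nothing but the Birkhoff factorization of $(g^{+})^{-1}(g^{-})^{\sharp}$. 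The mixed signs are essential: against the genuine diagonal, $\LSLRP\times\LSLRP$ would force $({g^{+}})^{-1}g^{-}\in\LSLRP$ and is nowhere near dense, so it is precisely the $\eta\mapsto\eta^{-1}$ twist hidden in the coupling that converts your second positive factor into a negative one. The paper sidesteps this bookkeeping by building the $1/\bar\mu$-twist into the identification $\LSLC\cong\LSLR\times\LSLR$ from the start, so that $\LISU$ becomes the honest diagonal and $\LSLCP$ corresponds to $\LSLRP\times\LSLRN$, after which (2) is literally the quoted theorem of \cite{DIT}. With this recombination inserted your argument closes; without it, part (2) is not proved.
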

\begin{proof}
 We first note that a given real Lie algebra $\mathfrak g$, the para-complexification 
 $\mathfrak g \otimes \C$ of $\mathfrak g$ is isomorphic to $\mathfrak g \oplus 
 \mathfrak g$ as a real Lie algebra, that is, the isomorphism is given explicitly 
 as
\begin{equation}\label{eq:doubleiso}
\mathfrak g \oplus 
 \mathfrak g \ni (X, Y) \mapsto \frac12 (X+Y) + \frac12(X-Y)\ip  \in
 \mathfrak g \otimes \C. 
\end{equation}
 Accordingly an isomorphism between $\SLR \times \SLR$ and $\SLC$ follows.
 In particular we have an isomorphism between $\{\di(a, a^{-1})\mid a\in \R^{\times}\}
 \times \{\di(a, a^{-1})\mid a\in \R^{\times}\}$
 and $\{\di(r e^{\ip \theta}, r^{-1} e^{- \ip \theta})\mid r \neq 0, \; \theta \in \R\}$ follows.
 Let us consider two real Lie algebras $\slR$ and $\isu$:
\[
 \slR = \left\{\begin{pmatrix}
 a & b \\ c & -a 
	       \end{pmatrix}\mid a, b, c \in \R\right\}, \quad 
 \isu = \left\{\begin{pmatrix}
 c \ip & b - a \ip \\ b + a \ip & -c \ip 
	       \end{pmatrix}\mid a, b, c \in \R\right\}.
\]
 Then an explicit map 
\begin{equation}\label{eq:isom}
 X \mapsto \frac12 (X + X^*) + \frac{1}{2} (X- X^*)\ip, \quad 
 X^*= - \sigma_3 \overline{X}^T \sigma_3
\end{equation}
 induces an isomorphism between $\slR$ and $\isu$.
 Note that $X^* =- \sigma_3 X\sigma_3$ for $X \in \slR$.
 Then accordingly an isomorphism between $\SLR$ and $\ISU$ follows.

 Let us now define the loop algebras of $\mathfrak {sl}_2 \R$
 by 
\begin{align*}
 \lslR & = \left\{\xi : \R^{+} \to 
 \mathfrak {sl}_{2} \R \mid \mbox{$\xi = \cdots + \xi_{-1}\lambda^{-1} + 
 \xi_0 + \xi_1 \lambda + \cdots $ and $\xi(- \lambda) = \sigma_3 \xi(\lambda) \sigma_3$}
\right\}, \\
 \Lambda^{\pm} \mathfrak {sl}_2 \R_{\sigma} & = \left\{ 
 \xi \in  \lslR \mid \xi=  \xi_0 + \xi_{\pm 1} \lambda^{\pm 1} + \cdots 
\right\}.
\end{align*}
 Moreover, the lower subscript $*$ denotes normalization at $\lambda = 0$ and 
 $\lambda = \infty$, that is, $\xi_0=0$ in $ \Lambda^{\pm} \mathfrak {sl}_2 \R$.
 On the one hand the loop algebra of $\isu$
 is defined by 
\[
  \lisu= \left\{ \tau : \S^1_1 \to \isu \mid 
 \tau(- \mu) = \sigma_3 \tau(\mu) \sigma_3\right\}.
\]
 The Lie algebra of 
 $\LSLC$ is defined by
\begin{align*}
 \lslc = \left\{ \tau : \S^1_1 \to \slc \mid 
 \mbox{$\tau = \cdots + \tau_{-1} \mu^{-1} + \tau_0 + \tau_1 \mu + \cdots 
 $ and $\tau(- \mu) = \sigma_3 \tau(\mu) \sigma_3$}
\right\},
\end{align*}
 and 
 it is easy to see that the loop algebra $\lisu$ can be extended 
 to the following fixed point set of an anti-linear 
 involution of $\lslc$:
\[
 \lisu=\left\{ \tau \in   \lslc \mid 
 \tau^{*}(1/\bar \mu) = \tau (\mu)  \right\}.
\]
 We now identify the two loop algebras $\lslR$ and $\lisu$ as follows:
 Let $\xi
 = \cdots + \xi_{-1} \lambda^{-1} + \xi_0 + \xi_1 \lambda + \cdots$ 
 with $\xi_{i} \in \slR$ be an element in $\lslR$ and consider the isomorphism 
 \eqref{eq:isom}: 
 \begin{align*}
  \xi \mapsto 
 \tilde \xi&= \xi \ell+\xi^*\bar \ell \\
 &= \cdots +(\xi_{-1}\ell +\xi_{-1}^*\bar \ell)\lambda^{-1}
 + \xi_{0}\ell+\xi_{0}^*\bar \ell
 + (\xi_{1}\ell+\xi_{1}^*\bar \ell )\lambda + \cdots,
 \end{align*}
 where we set 
\[
\ell = \frac12(1+\ip).	       
\]
 Since $\lambda \in \R^{+}$ corresponds to 
 $\lambda = \mu \ell + \mu^{-1} \bar \ell$ with $\mu \in \S^1_1 \;(\bar \mu =\mu^{-1})$ 
 and the properties of null basis $\{\ell, \bar \ell\}$, 
 that is, $\ell \bar \ell =0$ and 
 $\ell^2 =\ell$, $\bar \ell\,^2=\bar \ell$,
 we have 
\[
 \tilde \xi = \cdots + (\xi_{-1} \ell + \xi_{1}^* \bar \ell) \mu^{-1}
+ (\xi_{0} \ell + \xi_{0}^* \bar \ell) 
+ (\xi_{1} \ell + \xi_{-1}^* \bar \ell) \mu
+ \cdots.
\]
 Thus the following map is an isomorphism between $\lslR$ and
 $\lisu$
\begin{equation}\label{eq:funnyisom} 
 \lslR \ni \xi (\lambda) \mapsto 
 \xi (\mu)\ell + \xi^* (1/\bar \mu)\bar \ell
 \in \lisu,
\end{equation}
 where $\mu = \lambda \ell  + \lambda^{-1} \bar \ell$.
 
 Then combining two isomorphisms \eqref{eq:doubleiso} and \eqref{eq:funnyisom}, 
 we have isomorphisms 
\[
 \lslR \oplus\lslR \cong \lisu \oplus\lisu \cong \lslc,
\]
 where the maps are explicitly given by 
\begin{equation}\label{eq:isomap1}
 (\xi(\lambda), \eta(\lambda)) \mapsto 
 \left( \xi(\mu)\ell+
 \xi^*(1/\bar \mu)\bar \ell, \, 
\eta(\lambda)\ell+
 \eta^*(1/\bar \mu)\bar \ell
 \right)
\end{equation}
 for $ \lslR \oplus\lslR \cong \lisu \oplus\lisu$, and 
\begin{equation}\label{eq:isomap}
 (\xi(\lambda), \eta(\lambda)) \mapsto \xi(\mu)\ell+\eta^*(1/\bar \mu)\bar \ell
\end{equation}
 for  $ \lslR \oplus\lslR \cong \lslc$.
 Moreover, by the map \eqref{eq:isomap}, the following isomorphisms follow: 
\[
   \Lambda^{+} \mathfrak {sl}_2 \R \oplus 
   \Lambda^{-} \mathfrak {sl}_2 \R  \cong \lslcp,
 \quad 
   \Lambda^{-} \mathfrak {sl}_2 \R \oplus 
   \Lambda^{+} \mathfrak {sl}_2 \R  \cong \lslcn.
\]
 It is well known that \cite[Section 2.1]{DIT} the loop algebra 
 $\lslR$ 
 is a Banach Lie algebra and thus $\lslc (\cong  \lslR 
 \times  \lslR)$ is
 also a Banach Lie algebra, and the corresponding loop groups $\LSLR$ and
 $\LSLC(\cong \LSLR \times \LSLR)$ 
 become Banach Lie groups, respectively.

 Then the Birkhoff and Iwasawa decompositions 
 of 
 $\LSLR$ and $\LSLR \times \LSLR$ were proved in  Theorem 2.2 and Theorem 2.5 in \cite{DIT}: 
 The following multiplication maps
\[
 \LSLRN \times \LSLRP  \rightarrow \LSLR ,\quad 
 \LSLRP \times \LSLRN  \rightarrow \LSLR ,
\]
 and 
\[
\Delta (\LSLR \times \LSLR) 
 \times 
\LSLRP \times \LSLRN \rightarrow 
 \LSLR \times \LSLR 
\]
 are diffeomorphisms onto the open dense subsets of $\LSLR$ 
 and $ \LSLR \times \LSLR$, respectively.
 Then these decomposition 
 theorems can be translated to the Birkhoff and Iwasawa decompositions for 
 $\LSLC$.
 This completes the proof.
\end{proof}
\begin{Remark}
 In this paper, we consider only the loop group of 
 a Lie group $G$ which is defined on the hyperbola $\S^1_1$ and 
 has the power series expansion. We have denoted such loop group 
 by the symbol $\Lambda G_{\sigma}$.
 However in \cite{DIT}, the authors considered the loop group 
 $\tilde \Lambda G_{\sigma}$ which was a space of continuous maps from $\R^+$ 
 and it can be analytically  continued to $\mathbb C^{\times}$, 
 that is, an element of $\tilde \Lambda G_{\sigma}$ has the power series expansion.  
 If an element of $\tilde \Lambda G_{\sigma}$ is restricted to $\R^{+}$, then
 it corresponds to an element of $\Lambda G_{\sigma}$ as discussed above.
\end{Remark}
 In the following, we assume that an extended frame $F^{\mu}$ is in the
 big cell of $\LSLC$. 
 Using the Birkhoff decomposition in Theorem \ref{thm:BIdecomposition}, 
 we have the para-holomorphic data from a timelike minimal surface. 
\begin{Theorem}[The normalized potential]\label{thm:normalized}
 Let $F^{\mu}$ be an extended frame of a timelike minimal surface $f$ in 
 $\Nil$, and apply the Birkhoff decomposition in Theorem $\ref{thm:BIdecomposition}$ as
 $F^{\mu} = F_-^{\mu} F_+^{\mu}$ with $F_-^{\mu} \in  
 \LSLCN$ and
 $F_+^{\mu} \in  \LSLCP$. 
 Then the Maurer-Cartan form of $F_-^{\mu}$, that is, $\xi = (F_-^{\mu})^{-1} d F_-^{\mu}$, is para-holomorphic with respect to $z$. Moreover, $\xi$ 
 has the following  explicit form$:$
\begin{equation}\label{eq:normalized}
 \xi = \mu^{-1} \begin{pmatrix} 0 & b(z)\\ - \frac{B(z)}{b(z)} & 0
		   \end{pmatrix} dz,  
\end{equation}
 where 
\[
 b (z) = - \frac{\ip}4 \frac{h^2(z,0)}{h(0,0)}.
\]
 The data $\xi$ is called the {\rm normalized potential} of a
 timelike minimal surface $f$.
\end{Theorem}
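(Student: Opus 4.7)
The plan is to exploit the Birkhoff decomposition from Theorem~\ref{thm:BIdecomposition} in two ways: first to conclude the para-holomorphicity of $F_-^\mu$, and second, through a gauge-and-restrict argument on the slice $\{\bar z = 0\}$, to pin down the scalar $d$ appearing in $b(z)$.

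First, write $F^\mu = F_-^\mu F_+^\mu$ with $F_-^\mu \in \LSLCNN$ and $F_+^\mu \in \LSLCP$, and compute $(F_-^\mu)^{-1} d F_-^\mu = F_+^\mu \alpha^\mu (F_+^\mu)^{-1} - (dF_+^\mu)(F_+^\mu)^{-1}$. The $d\bar z$-part of the right-hand side equals $F_+^\mu V^\mu (F_+^\mu)^{-1}\,d\bar z - (F_+^\mu)_{\bar z}(F_+^\mu)^{-1}\,d\bar z$; since $V^\mu$ in \eqref{eq:UVmu} contains only the powers $\mu^0$ and $\mu$, this takes values in the Lie algebra of $\LSLCP$. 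The left-hand side lies in the Lie algebra of $\LSLCNN$, and these two subalgebras intersect trivially by the uniqueness in the Birkhoff decomposition. Hence $(F_-^\mu)_{\bar z} = 0$, so $F_-^\mu$ depends only on $z$ and $\xi$ is para-holomorphic.

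For the explicit form, expand $F_+^\mu = F_+^{(0)} + \mu F_+^{(1)} + \cdots$; the twisting $F_+^\mu(-\mu) = \sigma_3 F_+^\mu(\mu)\sigma_3$ forces $F_+^{(0)}$ to be diagonal, $F_+^{(0)} = \mathrm{diag}(d, d^{-1})$ with $\det F_+^{(0)} = 1$. Writing $U^\mu = \mu^{-1} A_{-1} + A_0$ with $A_{-1}$ off-diagonal and $A_0$ diagonal, the $dz$-coefficient of $F_+^\mu U^\mu (F_+^\mu)^{-1} - (F_+^\mu)_z(F_+^\mu)^{-1}$ has $\mu$-powers only in $\{\mu^{-1},\mu^0,\mu^1,\ldots\}$, while the $dz$-coefficient of $\xi$ lies in the Lie algebra of $\LSLCNN$ (only strictly negative $\mu$-powers). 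Matching coefficients then forces $\xi = \mu^{-1} F_+^{(0)} A_{-1}(F_+^{(0)})^{-1}\,dz$. Direct conjugation by $\mathrm{diag}(d, d^{-1})$ produces the shape \eqref{eq:normalized} with $b = -\tfrac{\ip}{4} h d^2$ and $(2,1)$-entry $4\ip B h^{-1} d^{-2} = -B/b$ (using $\ip^{-1} = \ip$).

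To identify $d$, assume the base-point normalization $F^\mu(0, 0) = \id$, restrict $F^\mu$ to $\bar z = 0$, and gauge on the right by $g(z) = \mathrm{diag}(h(z, 0)^{1/2}, h(z, 0)^{-1/2})$. A direct calculation gives $g_z g^{-1} = A_0|_{\bar z = 0}$, so the gauged loop $\hat F^\mu(z) := F^\mu(z, 0) g(z)^{-1}$ satisfies $\hat F^{-1} \hat F_z = \mu^{-1}(g A_{-1} g^{-1})|_{\bar z = 0}$, an ODE with only a $\mu^{-1}$-coefficient. Solving by iterated integration yields $\hat F^\mu(z) = g(0)^{-1}\cdot G^\mu(z)$ with $G^\mu(z) \in \LSLCNN$. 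Comparing with $\hat F^\mu(z) = F_-^\mu(z)\cdot (F_+^\mu(z, 0) g(z)^{-1})$ and invoking uniqueness of the Birkhoff decomposition forces $F_+^\mu(z, 0) g(z)^{-1} = g(0)^{-1}$, so $d(z, 0)^2 = h(z, 0)/h(0, 0)$, and substitution gives $b(z) = -\tfrac{\ip}{4} h(z, 0)^2/h(0, 0)$. The principal technical obstacle is verifying that the iterated-integral series for $\hat F^\mu(z)$ converges to a genuine loop in $\LSLCNN$ rather than to a merely formal power series in $\mu^{-1}$; this uses the Banach Lie group structure underlying Theorem~\ref{thm:BIdecomposition}. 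Once convergence is granted, the remainder of the argument reduces to uniqueness of the Birkhoff factorization and direct computation.
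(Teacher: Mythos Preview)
Your proof is correct and follows essentially the same route as the paper: compute $\xi$ via the conjugation identity $(F_-^\mu)^{-1}dF_-^\mu = F_+^\mu \alpha^\mu (F_+^\mu)^{-1} - dF_+^\mu(F_+^\mu)^{-1}$, compare $\mu$-powers on both sides to force $\xi$ to be para-holomorphic and of the form $\mu^{-1}F_+^{(0)}A_{-1}(F_+^{(0)})^{-1}\,dz$, and then determine $F_+^{(0)}$ on the slice $\bar z=0$.

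The only real difference is in this last step. The paper reads off the $\mu^0$-coefficient of the conjugation identity directly, obtaining the ODE $dF_{+0}=F_{+0}\alpha_0'$ on $\bar z=0$ with $\alpha_0'=\tfrac12(\log h)_z\,\sigma_3\,dz$, and integrates it to $f_0(z,0)=h^{1/2}(z,0)h^{-1/2}(0,0)$. You instead gauge by $g=\operatorname{diag}(h(z,0)^{1/2},h(z,0)^{-1/2})$ to kill the $\mu^0$-term and then invoke Birkhoff uniqueness. These are the same computation in different clothing: your gauge $g$ is precisely the paper's solution $F_{+0}(z,0)$ up to the constant $g(0)$, and your Birkhoff comparison recovers exactly $F_+^{(0)}(z,0)=g(z)g(0)^{-1}$.

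One remark: your stated ``principal technical obstacle'' about convergence of the iterated-integral series for $\hat F^\mu$ is not actually an obstacle. The loop $\hat F^\mu(z)=F^\mu(z,0)g(z)^{-1}$ is already a genuine element of $\LSLC$ by construction, so $G^\mu=g(0)\hat F^\mu$ has a convergent $\mu$-expansion a priori; the ODE $G^{-1}G_z=\mu^{-1}(\cdots)$ with $G(0)=\id$ then forces all nonnegative Fourier coefficients except $G_0=\id$ to vanish, placing $G^\mu$ in $\LSLCNN$ without any appeal to convergence of a formal series.
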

\begin{proof}
 Let $F^{\mu}$ be an extended frame of a timelike minimal surface $f$ in $\Nil$.
 Applying the Birkhoff decomposition \eqref{eq:Birkhoff}
 in Theorem \ref{thm:BIdecomposition}:
\[
 F^{\mu} =  F^{\mu}_- F^{\mu}_+ \in 
 \Lambda^-_* {\rm SL}_{2} \C_{\sigma}\times \LSLCP.
\]
 Then the Maurer-Cartan form of $ F^{\mu}_{-}$ can be computed as
\begin{align}
\label{eq:compxi} \xi & = (F^{\mu}_{-})^{-1} d F^{\mu}_{-}  \\
\nonumber     & = F_+^{\mu} (F^{\mu})^{-1} d \left\{F^{\mu} (F_+^{\mu})^{-1} \right\}    \\
\nonumber     & =  F_+^{\mu} \alpha (F_+^{\mu})^{-1}  -  d F_+^{\mu} (F_+^{\mu})^{-1}.\end{align}
 Since $\xi$ takes values in $\lslc$ and does not have 
 $\mu^0$-term, 
 thus 
\[
 \xi = \mu^{-1}F_{+0} \begin{pmatrix} 0 &- \frac{\ip}4 h \\
 \frac{4 \ip B}{h} & 0
 \end{pmatrix} F_{+0}^{-1}\Big|_{\bar z=0} dz,
\]
 where $F_{+0}^{\mu}$ denotes the first coefficient of $F_{+}^{\mu}$ 
 expansion with respect to $\mu$, that is, 
 $F_{+}^{\mu} = F_{+0} + F_{+1} \mu + F_{+2} \mu^2+ \cdots$.
 Therefore $F_-^{\mu}$ is para-holomorphic with respect to $z$ and
 moreover, $\xi$ can be computed as
\[
 \xi(z, \mu) = \mu^{-1} \begin{pmatrix}
0 & - \frac{\ip}{4} h(z, 0) f_{0}^2(z, 0)  \\
\frac{4\ip B(z)}{h(z,0)}f_{0}^{-2}(z, 0) & 0
 \end{pmatrix}dz, 
\]
 where  $F_{+0}(z, 0)= \di(f_0(z, 0), f_0^{-1}(z, 0))$.
 We now look at the $\mu^0$-terms of both sides of \eqref{eq:compxi}:
 Then 
\[ 
0 = (F_{+0} \alpha_0^{\prime} F_{+0}^{-1}- d F_{+0} F_{+0}^{-1})|_{\bar z=0},
\]
 where $\alpha_0^{\prime}$ is
 $\alpha_0^{\prime}= (\frac12 \log h_{z}(z, 0) ) \sigma_3 dz$.
 It is equivalent to $ d F_{+0} = F_{+0} \alpha_0^{\prime}$, and therefore 
 \[
  f_{0}(z, 0) = h^{1/2}(z, 0) c,
 \]
 where $c$ is some constant, follows. 
 Since $F_{+0}(0, 0)= \id$, thus $c= h^{-1/2}(0,0)$. 
 This completes the proof.
\end{proof}

\subsection{From para-holomorphic potentials to minimal surface: 
 The Iwasawa decomposition}
 Conversely, in the following theorem we will show 
 a construction of timelike minimal surface from 
 normalized potentials as defined in \eqref{eq:normalized},
 the so-called \textit{generalized Weierstrass type representation}.

 \begin{Theorem}[The generalized Weierstrass type representation
]\label{thm:Weierstrass}
  Let $\xi$ be a normalized potential defined in \eqref{eq:normalized},
  and let $F_{-}$ be the solution of 
\[
 \partial_z F_{-} = F_{-} \xi, \quad F_{-}(z=0)= \id.
\]
 Then applying the Iwasawa decomposition in Theorem $\ref{thm:BIdecomposition}$
 to $F_-$, that is $F_- = F^{\mu} V_+$ with 
 $F^{\l} \in \LISU$
 and $V_+ \in \LSLCP$, and choosing a proper diagonal ${\rm U}_1^{\prime}$-element $k$, 
 $F^{\mu}k$ is 
 an extended frame of the normal Gauss map $f^{-1} N$ of a 
 timelike minimal surface $f$ in $\Nil$ up to the change of coordinates. 
 \end{Theorem}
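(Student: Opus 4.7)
The strategy is to reverse the process of Theorem \ref{thm:normalized}: given $\xi$, construct an $\LISU$-valued map by the Iwasawa decomposition, verify its Maurer-Cartan form has the canonical shape \eqref{eq:Umu}--\eqref{eq:Vmu} with $H=0$, and then invoke Theorem \ref{thm:Sym2} to produce the surface. First, since $\xi$ is an $\lslcn$-valued one-form depending para-holomorphically on $z$ and having no $d\bar z$-component, the ODE $\partial_z F_- = F_- \xi$ with $F_-(0)=\id$ admits a unique solution $F_- \in \LSLCNN$ that is para-holomorphic in $z$. Next, apply Theorem \ref{thm:BIdecomposition}(2) on the open subset of $\D$ on which $F_-$ lies in the big cell (which contains $z=0$ since $F_-(0)=\id$): this yields $F_- = F^\mu V_+$ with $F^\mu \in \LISU$ and $V_+ \in \LSLCP$, both smooth in $(z,\bar z)$.

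The heart of the proof is the computation
\begin{equation*}
\alpha^\mu := (F^\mu)^{-1} dF^\mu = V_+ \xi V_+^{-1} - dV_+ \cdot V_+^{-1}.
\end{equation*}
Writing $V_+ = V_0 + V_1 \mu + V_2 \mu^2 + \cdots$ (with even-indexed $V_k$ diagonal and odd-indexed ones off-diagonal by the twisting $V_+(-\mu)=\sigma_3 V_+(\mu)\sigma_3$), the $dz$-part $U^\mu$ of $\alpha^\mu$ has $\mu$-expansion starting at $\mu^{-1}$, while the $d\bar z$-part $V^\mu$ starts at $\mu^0$. Because $F^\mu \in \LISU$, the form $\alpha^\mu$ is $\lisu$-valued and therefore satisfies the reality relation $\sigma_3 \overline{U^\mu(1/\bar \mu)}^T \sigma_3 = -V^\mu$. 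Comparing positive and negative powers forces the truncation $U^\mu = U_0 + U_{-1}\mu^{-1}$ and $V^\mu = V_0 + V_1\mu$, with $U_0, V_0$ diagonal and $U_{-1}, V_1$ off-diagonal. This is exactly the shape of \eqref{eq:Umu}--\eqref{eq:Vmu} with $H=0$.

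To finalize, the upper-right entry of the $\mu^{-1}$-coefficient of $U^\mu$---which at this stage equals $(V_{0,11})^2 b(z)$---must be normalized into the form $-\frac{\ip}{4} h \mu^{-1}$ with $h>0$ demanded by \eqref{eq:UVmu}. A constant gauge $\tilde F^\mu := F^\mu k$ with $k = \di(e^{\ip\theta}, e^{-\ip\theta}) \in \Uone$ accomplishes this, and the existence of such a $\theta$ follows from the nondegeneracy transmitted via \eqref{eq:assumption1}. The gauge preserves the shape \eqref{eq:Umu}--\eqref{eq:Vmu}, so by Theorem \ref{thm:main} the gauged $\tilde F^\mu$ is the extended frame of a non-conformal Lorentz harmonic map $N : \D \to \S^2_1$. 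After the gauge, the coefficient $a$ of the $(1,2)$-entry of $\alpha_1^\prime$ equals $-\frac{\ip}{4}h$, so $a \bar a = -h^2/16 < 0$, and the hypothesis of Theorem \ref{thm:Sym2} is met. Applying that theorem, with generating spinors read off from the second row of $\tilde F^\mu$ as prescribed there, produces the desired timelike minimal surface $f^\mu$ in $\Nil$. The ``up to change of coordinates'' clause records the residual freedom in the para-holomorphic conformal parameter.

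The principal technical obstacle is the truncation step---showing that the Maurer-Cartan form of the Iwasawa factor collapses under the reality condition to precisely the shape of \eqref{eq:Umu}--\eqref{eq:Vmu} with $H=0$. This is the para-complex analogue of the standard DPW argument and hinges on the complementary loop-algebra decomposition provided by Theorem \ref{thm:BIdecomposition}(1) together with the twisting involution defining $\LISU$. A subsidiary challenge is verifying that the gauge $k$ and the resulting angle function $h$ are globally well-defined on $\D$, which requires that the nondegeneracy condition on $\xi$ propagate throughout the domain rather than holding only at the base point.
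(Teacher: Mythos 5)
Your proposal follows essentially the same route as the paper's proof: solve the para-holomorphic ODE for $F_-$, apply the Iwasawa decomposition of Theorem \ref{thm:BIdecomposition}, compute $\alpha^{\mu}=V_+\xi V_+^{-1}-dV_+\,V_+^{-1}$, truncate the expansion using the $\lisu$-reality (twisting) condition, and then normalize by a diagonal $\Uone$-gauge together with a change of coordinates so that $\alpha^{\mu}$ takes exactly the form \eqref{eq:alphamu}, whence $F^{\mu}k$ is an extended frame. The only caveat is that the normalizing gauge $k$ cannot in general be taken constant as you assert--it must be allowed to vary with $(z,\bar z)$ (its existence pointwise requiring the para-complex modulus condition $a\bar a<0$, as in Theorem \ref{thm:Sym2})--but since you combine it with the coordinate change this matches the paper's final normalization step.
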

\begin{proof}
 It is easy to see that the solution $F_-$ takes values in 
 $\LSLC$. Then apply the Iwasawa decomposition to $F_-$ (on the big cell), that is,
\[
 F_- = F^{\mu} V_+ \in \LISU
 \times
\LSLCP.
\]
 We now compute the Maurer-Cartan form of $F^{\mu}$ as $(F^{\mu})^{-1} d F^{\mu}$,
\begin{align}
\label{eq:MCF} \alpha^{\mu}& =(F^{\mu})^{-1} d F^{\mu} \\
 &= V_+ F_-^{-1} d (F_- V_+^{-1})\\
\nonumber &= V_+ \xi V_+^{-1} - d V_+ V_+^{-1}.
\end{align}
 From the right-hand side of the above equation, it is easy to see 
 $\alpha^{\mu} = \mu^{-1}\alpha_{-1}+ \alpha_{0} + \mu^{1}\alpha_{1}+ \cdots$.
 Since $F^{\mu}$ takes values in $\LISU$, thus 
 \[
  \alpha^{\mu} = \mu^{-1}\alpha_{-1}+ \alpha_{0} + \mu^{1}\alpha_{1},
 \]
 and $\alpha_{j}^* = \alpha_{-j}$ holds. From the form of $\xi$
 and the right-hand side of \eqref{eq:MCF},  the Maurer-Cartan form 
 $\alpha^{\mu}$ almost has the form in \eqref{eq:alphamu}.
 Finally a proper choice of a diagonal $\Uone$-element $k$ and a change of coordinates 
 imply that $\alpha^{\mu}$ is exactly the same form in \eqref{eq:alphamu}. 
 This completes the proof.
\end{proof}

\begin{Remark}
Taking an extended frame $\tilde{F}^\mu$ given by Theorem \ref{thm:Weierstrass} with a $\LISU$-valued initial condition :
$F_-(z=0)=A \:\:(A \in \LISU)$,
extended frames $\tilde{F}^\mu$ and $F^\mu$ differ by $A$,
that is,
$\tilde{F}^\mu = A F^\mu$.
In general, timelike minimal surfaces in $\Nil$ corresponding to extended frames for different initial conditions are not isometric.
\end{Remark}

\section{Examples}\label{sc:Example}
 In this section we will give some examples of timelike minimal surfaces in $\Nil$ 
 in terms of para-holomorphic potentials and the generalized Weierstrass type 
 representation as explained in the previous section. 
\subsection{Hyperbolic paraboloids corresponding to circular cylinders}
 Let $\xi$ be the normalized potential defined as
\[\xi =
 \mu^{-1}
 \begin{pmatrix}
  0& -\frac {\ip} 4\\
  \frac {\ip} 4 &0
 \end{pmatrix}
 dz.
\]
The solution of the equation $d F_- = F_- \xi$
 with the initial condition $F_- (z=0) = \id$
 is given by
\[ F_- =
 \begin{pmatrix}
 \cos \frac {\mu^{-1} z} 4 & -\ip \sin \frac {\mu^{-1} z} 4 \\
 \ip \sin \frac {\mu^{-1} z} 4 & \cos \frac {\mu^{-1} z} 4 
 \end{pmatrix}.
\]
 Applying the Iwasawa decomposition to the solution $F_-$:
 \[ F_- = F^{\mu} V_+,\]
 we obtain an extended frame $F^\mu: \C \to  \LISU $:
\[
 F^\mu =
 \begin{pmatrix}
 \cos \frac{ {\mu}^{-1} z + \mu \bar z } 4 & -\ip \sin \frac{ {\mu}^{-1} z + \mu \bar z } 4\\
 \ip \sin \frac{ {\mu}^{-1} z + \mu \bar z } 4 & \cos \frac{ {\mu}^{-1} z + \mu \bar z } 4
 \end{pmatrix}.
\]
 Then, by Theorem \ref{thm:Sym2}, we have the map $f_{\Min}$ explicitly
\[
 f_{\Min} = \frac{1}{2}
\begin{pmatrix}
 -\ip \cos \frac{\mu^{-1}z + \mu \bar z}2 & -\sin \frac{\mu^{-1}z + \mu \bar z}2 - \frac{\mu^{-1}z - \mu \bar z}2\\
 -\sin \frac{\mu^{-1}z + \mu \bar z}2 + \frac{\mu^{-1}z - \mu \bar z}2& \ip \cos \frac{\mu^{-1}z + \mu \bar z}2
\end{pmatrix},
\]
and
\[
 \hat{f} =\frac12
 \begin{pmatrix}
 -\frac{-\mu^{-1}z +\mu \bar z}4 \sin \frac{\mu^{-1}z +\mu \bar z}2 &
 -\sin \frac{\mu^{-1}z +\mu \bar z}2 -\frac{\mu^{-1}z -\mu \bar z}2\\
 -\sin \frac{\mu^{-1}z +\mu \bar z}2 +\frac{\mu^{-1}z -\mu \bar z}2 &
 \frac{-\mu^{-1}z +\mu \bar z}4 \sin \frac{\mu^{-1}z +\mu \bar z}2
 \end{pmatrix}.
\]
 Thus we obtain timelike surfaces $f_{\Min}$ with the constant mean curvature $1/2$ in $\Min$
 and timelike minimal surfaces $f^\mu$ in $\Nil$:
\[
  f_{\Min} =\left(
 \sin \frac{\mu^{-1}z + \mu \bar z}2,
 \ip \frac{\mu^{-1}z - \mu \bar z}2,
 -\cos \frac{\mu^{-1}z + \mu \bar z}2
 \right)
\]
 and
\[
 f^\mu=\left(
 \ip \frac{\mu^{-1}z - \mu \bar z}2,
 \sin \frac{\mu^{-1}z + \mu \bar z}2,
 \ip \frac{\mu^{-1}z - \mu \bar z}4 \sin \frac{\mu^{-1}z + \mu \bar z}2
 \right)
\]
 for $\mu = e^{\ip t}$ with sufficiently small $t$
 on some simply connected domain $\D$.
 Each surface $f^\mu$ describes a part of a hyperbolic paraboloid $x_3= x_1 x_2/2$.
 Furthermore $f^\mu$ has the function $h= 1$,
 the Abresch-Rosenberg differential $B^\mu dz^2 = \mu^{-2}/16 dz^2$ on $\D$
 and
 the first fundamental form $I$ of $f^\mu$ is
 $I= \cos ^2 (\mu^{-1} z + \mu \bar z)/2 dz d\bar z$.
 The corresponding timelike CMC $1/2$ surfaces $f_{\Min}$
 are called circular cylinders.

\begin{figure}[t]
\centering
\includegraphics[width=5cm]{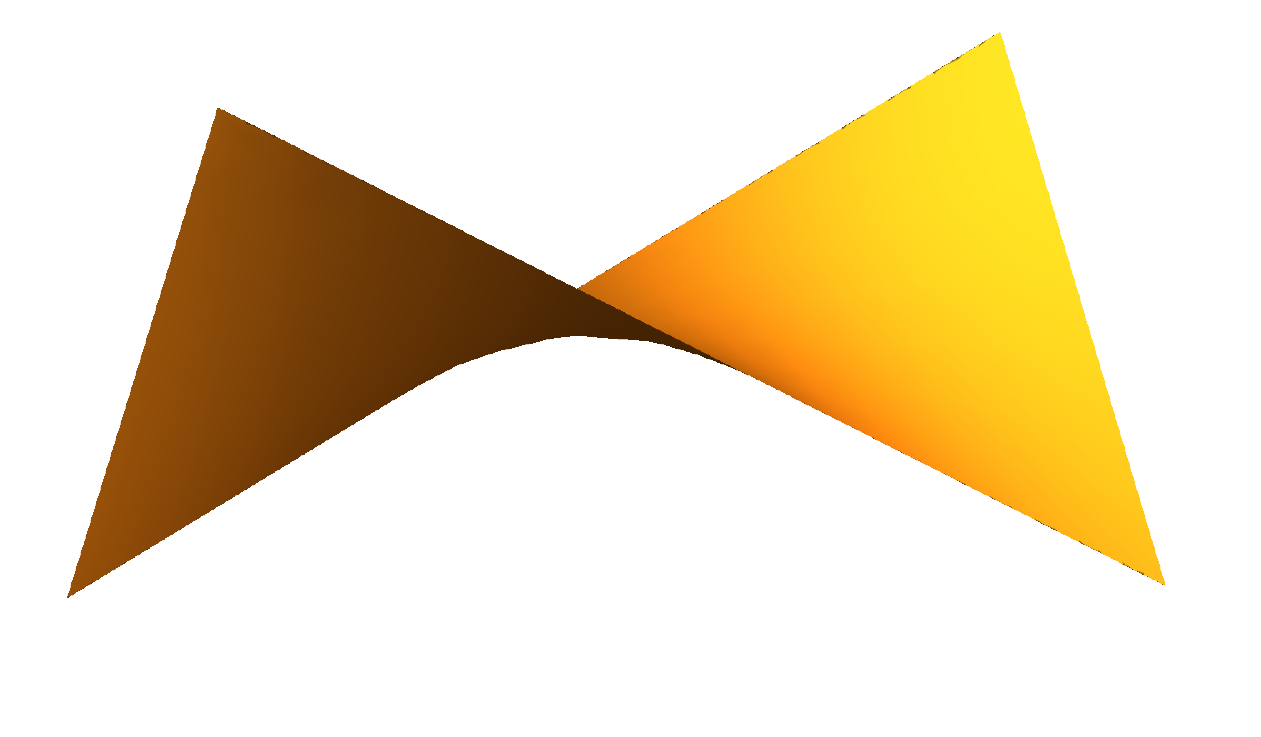}\hspace{0.5cm}
\includegraphics[width=5cm]{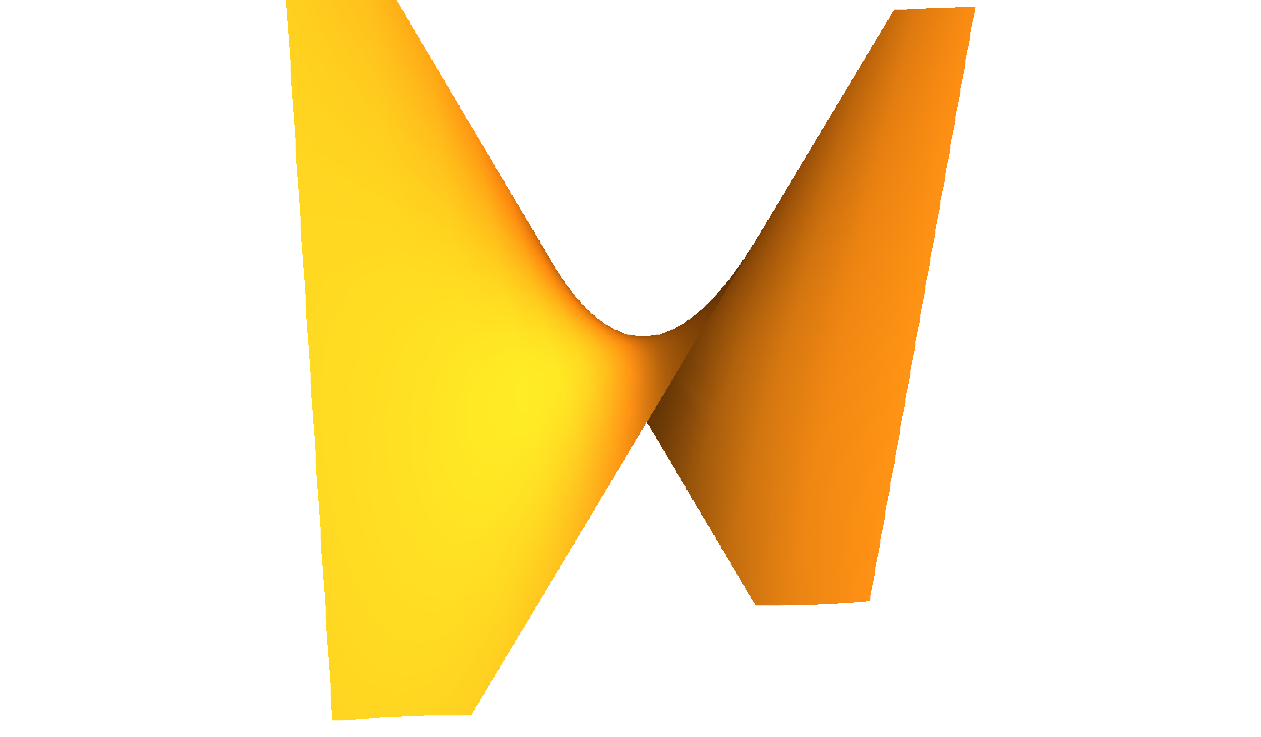}
\caption{Hyperbolic paraboloids corresponding to a cylinder (left) and 
 a hyperbolic cylinder (right).}\label{fig:cylinder}
\end{figure}

\subsection{Hyperbolic paraboloids corresponding to hyperbolic cylinders}
 Define the normalized potential $\xi$ as
\[
 \xi = \mu^{-1}
 \begin{pmatrix}
 0 & - \frac \ip 4 \\
 - \frac \ip 4 & 0
 \end{pmatrix} dz.
\]
 The solution of the equation $d F_- = F_- \xi$
 with the initial condition $F_- (z=0) = \id$
 is given by
\[F_- =
 \begin{pmatrix}
 \cosh \frac {\mu^{-1} z} 4 & -\ip \sinh \frac {\mu^{-1} z} 4 \\
 -\ip \sinh \frac {\mu^{-1} z} 4 & \cosh \frac {\mu^{-1} z} 4 
 \end{pmatrix}.
\]
 Applying the Iwasawa decomposition to the solution $F_-$:
 \[ F_- = F^{\mu} V_+,\]
 we obtain an extended frame $F^\mu: \C \to  \LISU $:
\[
 F^\mu =
 \begin{pmatrix}
 \cosh \frac{ -{\mu}^{-1} z + \mu \bar z } 4 & \ip \sinh \frac{ -{\mu}^{-1} z + \mu \bar z } 4\\
 \ip \sinh \frac{ -{\mu}^{-1} z + \mu \bar z } 4 & \cosh \frac{ -{\mu}^{-1} z + \mu \bar z } 4
 \end{pmatrix}.
\]
 Then, by Theorem \ref{thm:Sym2}, we have the map $f_{\Min}$ for $F^\mu$ explicitly
\[
 f_{\Min} = \frac12
 \begin{pmatrix}
 -\ip \cosh \frac{-\mu^{-1}z + \mu \bar z}2 &
 -\frac{\mu^{-1}z + \mu \bar z}2 + \ip \sinh \ip \frac{-\mu^{-1}z + \mu \bar z}2\\
 -\frac{\mu^{-1}z + \mu \bar z}2 - \ip \sinh \ip \frac{-\mu^{-1}z + \mu \bar z}2&
 \ip \cosh \frac{-\mu^{-1}z + \mu \bar z}2
 \end{pmatrix},
\]
 and thus we obtain timelike surfaces $f_{\Min}$ with the constant mean curvature $1/2$ in $\Min$
 and timelike minimal surfaces $f^\mu$ in $\Nil$:
\[
  f_{\Min} =\left(
 \frac{ \mu^{-1} z + \mu \bar z}2,
 - \sinh \ip \frac{ -\mu^{-1} z + \mu \bar z}2,
 -\cosh \frac{ -\mu^{-1}z + \mu \bar z}2
 \right)
\]
 and
\[
 f^\mu=\left(
 -\sinh \ip \frac{-\mu^{-1} z + \mu \bar z}2,
 \frac{\mu^{-1} z + \mu \bar z}2,
 \frac{\mu^{-1}z + \mu \bar z}4 \sinh \ip \frac{ -\mu^{-1}z + \mu \bar z}2
 \right)
\]
 for any $\mu$ on $\C$.
 Each timelike minimal surface $f^\mu$ describes the hyperbolic paraboloid $x_3=-x_1 x_2/2$
 and has the function $h=1$,
 the Abresch-Rosenberg differential $B^\mu dz^2 = -\mu^{-2}/16 dz^2$
 and the first fundamental form $I(\mu) = \left\{\cosh  \tfrac{\ip}2 ( -\mu^{-1} z + \mu \bar z)\right\}^2   dz d \bar z$.
 The corresponding timelike CMC $1/2$ surfaces $f_{\Min}$
 are called hyperbolic cylinders.
 
\subsection{Horizontal plane}
 Let $\xi$ be the normalized potential defined by
\[
 \xi = \mu^{-1}
 \begin{pmatrix}
 0 & -\ip\\
 0 & 0
 \end{pmatrix}dz.
\]
 The solution of the equation $d F_- = F_- \xi$ under the initial condition $F_-(z=0) = \id$ is given by
\[
 F_- =
 \begin{pmatrix}
 1 & -\ip \mu^{-1} z\\
 0 & 1
 \end{pmatrix}.
\]
 Then by the Iwasawa decomposition of the solution $F_- = F^\mu V_+$,
 we have an extended frame $F^\mu: \tilde{\D} \to \LISU$:
\begin{equation}\label{eq:exthori}
  F^\mu =
 \frac{1} {( 1+ z \bar z)^{1/2}}
 \begin{pmatrix}
 1 & -\ip \mu^{-1} z\\
 \ip \mu \bar z & 1
 \end{pmatrix},
\end{equation}
 where $\tilde{\D}$ is a simply connected domain defined as $\tilde{\D} = \left\{ z \in \C | z \bar z>-1 \right\}$.
 Then $f_{\Min}$ is given by
\[
 f_{\Min} =\frac1 {1 + z \bar z}
 \begin{pmatrix}
 \frac {\ip(3z \bar z -1)} 2 & -2 \mu^{-1} z\\
 -2\mu \bar z & -\frac {\ip (3z \bar z -1)} 2
 \end{pmatrix}.
\]
 Hence the timelike surfaces $f_{\Min}$ with the constant mean curvature $1/2$ in $\Min$
 and the timelike minimal surfaces $f^\mu$ in $\Nil$ are computed as
\[
 f_{\Min} =\left(
 \frac{2     ( \mu^{-1} z + \mu \bar z)}{1 + z \bar z},
 \frac{2 \ip ( \mu^{-1} z - \mu \bar z)}{1 + z \bar z},
 \frac{3 z \bar z - 1}{1 + z \bar z}
 \right)
\]
 and
\[
 f^\mu = \left(
 \frac{2 \ip (\mu^{-1} z -\mu \bar z)}{1 + z \bar z}, \:
 \frac{2(\mu^{-1} z +\mu \bar z)}{1 + z \bar z}, \:
 0
 \right).
\]
 The surfaces $f^\mu$ are defined on $D= \left\{ z \in \C | -1<z \bar z <1 \right\}$.
 In fact the first fundamental form $I$ of $f^\mu$ is computed as
\[
 I= 16 \frac {( 1-z \bar z )^2 } {(1 + z \bar z)^4} dz d \bar z.
\]
 Moreover the Abresch-Rosenberg differential $B^\mu dz^2$ vanishes on $D$.
 
 In general the graph of the function $F(x_1,x_2) = ax_1 + bx_2 +c$ for $a,b,c \in \R$
 describes a timelike minimal surface
 on $\D = \left\{(x_1,x_2) \mid -(a + x_2/2)^2 + (b - x_1/2)^2 +1 >0 \right\}$.
 This plane has positive Gaussian curvature $K$:
\[
 K= \frac {2 ( - ( a+\tfrac{1}{2}x_2 )^2 + ( b - \tfrac{1}{2}x_1 )^2 +1) +1}
 {4 ( -( a+\tfrac{1}{2}x_2 )^2 + ( b - \tfrac{1}{2}x_1)^2 +1)^2},
\]
 and it will be called the \textit{horizontal umbrellas}. The horizontal 
umbrellas are obtained by different choices of initial conditions of the extended
 frame of $F^{\mu}$ in \eqref{eq:exthori}. For examples the extended frame 
 $F_0 F^{\mu}$ with 
\[
 F_0= \begin{pmatrix} \cosh a & \mu^{-3}\sinh a\\
 \mu^3 \sinh a & \cosh a \end{pmatrix} \in \LISU,
\] 
 where $a \in \mathbb R$ gives a horizontal umbrella which is 
 not a horizontal plane. 
 
\subsection{B-scroll type minimal surfaces}\label{sbsc:Bscroll}
 Let $\xi$ be a normalized potential defined as 
\[
\xi = \mu^{-1} \begin{pmatrix} 0 & - \frac{\ip}4 \\  -S(z)\bar \ell  &0
\end{pmatrix} dz,
\]
 where  $\bar \ell =  \frac12 (1 -
 \ip)$ and $S(z)$ is a para-holomorphic function.
 The solution $\Phi$ of $d \Phi = \Phi \xi$ with $\Phi(z=0) = \id$ cannot be 
 computed explicitly, but it can be partially computed as follows:
 It is known that a para-holomorphic function $S(z)$ can be expanded
 as
\[
 S(z) = Q(s) \ell + R(t) \bar \ell
\]
 with  $ s= x + y$ and $t = x- y$ for para-complex coordinates $z = x + \ip y$,
 $Q = \Re S + \Im S$  and $R = \Re S - \Im S$.
 Note that $\ell^2 = \ell, \bar \ell^2 = \ell, \ell \bar \ell =0$.
 Moreover, from the definition of $s$ and $t$, 
 $d z = \ell d s + \bar \ell dt$ follows.
 Then the para-holomorphic potential $\xi$ can be decomposed by 
 \[
  \xi = \xi^s \ell + {\xi^t}^* \bar \ell,
 \]
 with ${\xi^t}^* = - \sigma_3 \left(\overline {\xi^t(1/\bar \mu)}\right)^{T} \sigma_3$ 
 and 
\begin{equation}\label{eq:xist}
\xi^s = \l^{-1}\begin{pmatrix}
0 &  - \frac14 \\  0 & 0
\end{pmatrix} ds, 
\quad 
\xi^t = \l \begin{pmatrix}
0 & - R(t) \\  \frac14 & 0
\end{pmatrix} dt.
\end{equation}
 Then by the isomorphism in \eqref{eq:funnyisom}, 
 the pair $(\xi^s(\lambda), \xi^t(\lambda))$ is the 
 normalized potential in \cite[Section 6.2]{DIT} for 
 a timelike CMC surface in $\Min$ \textit{B-scroll}.
 Then the solution of $d \Phi = \Phi \xi$  can be 
 computed by 
\[
  d \Phi^s = \Phi^s \xi^s, \quad   d \Phi^t = \Phi^t \xi^t\quad 
 \mbox{with}\quad \Phi^s (0)= \Phi^t (0)= \id
\]
 and $\Phi$ is given by 
 $\Phi = \Phi^s \ell + {\Phi^t}^* \bar \ell$, where 
 $\Phi^s = \Phi^s (\mu)$ and 
 ${\Phi^t}^* = \sigma_3\overline{\Phi^t (1 / \bar \mu)}^{T-1}\sigma_3$
 for $\Phi^s, \Phi^t \in \LSLR$.
 Then $\Phi^s$ can be explicitly integrated as 
\[
\Phi^s = \begin{pmatrix}1 & - \frac14 \l^{-1}s\\ 0 & 1
	 \end{pmatrix},
\]
 while $\Phi^t$ cannot be explicitly integrated. Set
\begin{equation}\label{eq:Phit}
 \Phi^t= \id + \sum_{k\geq 1} \l^{k} 
\begin{pmatrix} a_k & b_k\\ c_k & d_k \end{pmatrix},
\end{equation}
 where $a_{2k+1} = d_{2k+1} = b_{2k} = c_{2k}=0$ for all $k \geq 1$.
 Then applying the Iwasawa decomposition in Theorem \ref{thm:BIdecomposition} 
 to $\Phi$, that is, $\Phi = F^{\mu} V_+$, one can 
 compute
\[
 \Phi = \Phi^s \ell + {\Phi^t}^* \bar \ell 
 = (\hat F \ell + \hat F^* \bar \ell )(\hat V_+ \ell + \hat V_-^* \bar \ell),
\] 
 where $F^{\mu}= \hat F \ell + \hat F^* \bar \ell$ and 
$V_+ = \hat V_+\ell + \hat V_-^*  \bar \ell$ and $\hat F \in \LSLR$,
 $\hat V_{+} \in \LSLRP$ and $\hat V_{-} \in \LSLRN$. Note that it is
 equivalent to the Iwasawa decomposition of $\LSLR \times \LSLR$, that is,
\begin{equation}\label{eq:Iwasawaphist}
 (\Phi^s, \Phi^t) = (\hat F, \hat F) (\hat V_+, \hat V_-).
\end{equation}
\begin{Proposition}
 The map $\hat F$ can be computed as follows:
\begin{equation}\label{eq:hatF}
 \hat F = \Phi^t \Phi_{-}, \quad \mbox{with}
 \quad \Phi_{-} = \begin{pmatrix} \left(1+ \frac14 sc_1\right)^{-1} &  -\frac14 \l^{-1}s    \\ 
0 & 1+ \frac14 sc_1
 \end{pmatrix},
\end{equation}
 where $c_1= c_1(s, t)$ is the function defined in \eqref{eq:Phit}.
\end{Proposition}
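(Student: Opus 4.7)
My plan is to verify the formula by exhibiting Iwasawa factors that realize \eqref{eq:Iwasawaphist}. From the relation $\Phi^t = \hat F \hat V_{-}$ we read off $\hat F = \Phi^t \Phi_{-}$ with $\Phi_{-} := \hat V_{-}^{-1} \in \LSLRN$, so the problem reduces to identifying $\Phi_{-}$ explicitly and checking that $\hat V_{+} := \Phi_{-}^{-1}(\Phi^t)^{-1}\Phi^s$ lies in $\LSLRP$; uniqueness of the decomposition in Theorem~\ref{thm:BIdecomposition} then implies the result.

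The key simplification is that $\Phi^s$ is known in closed form: the equation $d\Phi^s = \Phi^s \xi^s$, with the strictly upper triangular nilpotent $\xi^s$ of \eqref{eq:xist}, integrates in a single step to $\Phi^s = \left(\begin{smallmatrix} 1 & -\lambda^{-1}s/4 \\ 0 & 1 \end{smallmatrix}\right)$, carrying only $\lambda^0$ and $\lambda^{-1}$ components. Since $(\Phi^t)^{-1} \in \LSLRP$ contributes no negative modes, $(\Phi^t)^{-1}\Phi^s$ has $\lambda^{-1}$ as its lowest order with coefficient $\left(\begin{smallmatrix} 0 & -s/4 \\ 0 & 0 \end{smallmatrix}\right)$, while its $\lambda^0$-coefficient is $\id - M_1\left(\begin{smallmatrix}0 & -s/4 \\ 0 & 0\end{smallmatrix}\right) = \left(\begin{smallmatrix}1 & 0 \\ 0 & 1+sc_1/4\end{smallmatrix}\right)$, where $M_1 = \left(\begin{smallmatrix} 0 & b_1 \\ c_1 & 0\end{smallmatrix}\right)$ is the order-$\lambda$ coefficient of $\Phi^t$ from \eqref{eq:Phit} (recall $a_1 = d_1 = 0$). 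This motivates the ansatz
\[
\Phi_{-} = \begin{pmatrix} \alpha & -\lambda^{-1}s/4 \\ 0 & \alpha^{-1} \end{pmatrix},
\]
whose inverse is $\left(\begin{smallmatrix} \alpha^{-1} & \lambda^{-1}s/4 \\ 0 & \alpha \end{smallmatrix}\right)$. Requiring that the $\lambda^{-1}$-coefficient of $\Phi_{-}^{-1}(\Phi^t)^{-1}\Phi^s$ vanish is then a $2\times 2$ matrix equation whose only nontrivial component, the $(1,2)$-entry, forces $\alpha^{-1} = 1+sc_1/4$, i.e.\ $\alpha = (1+sc_1/4)^{-1}$, matching \eqref{eq:hatF}. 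The potentially dangerous $\lambda^{-2}$-coefficient vanishes automatically since $[\Phi_{-}^{-1}]_{-1}$ and $[(\Phi^t)^{-1}\Phi^s]_{-1}$ are both of the form $\left(\begin{smallmatrix}0 & *\\0 & 0\end{smallmatrix}\right)$, so their product is zero.

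The main obstacle is justifying why this ansatz is the right one: a priori $\Phi_{-}$ could carry arbitrarily many negative $\lambda$-modes and arbitrary matrix structure. Both the truncation at $\lambda^{-1}$ and the upper-triangular shape with a prescribed off-diagonal $\lambda^{-1}$-part are dictated by the minimality requirement that the sole negative mode of $\Phi^s$ be cancelled without introducing new negative modes after left-multiplication by $\Phi_{-}^{-1}$; uniqueness of the Iwasawa decomposition then fixes the diagonal through the $\lambda^{-1}$-cancellation. What makes the explicit formula possible, despite the fact that $\Phi^t$ itself cannot be integrated in closed form, is that the entire computation depends only on the leading expansion coefficient $M_1$ of $\Phi^t$.
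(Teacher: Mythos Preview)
Your proof is correct and follows essentially the same approach as the paper. Both arguments reduce the Iwasawa factorization \eqref{eq:Iwasawaphist} to a Birkhoff-type splitting and verify the explicit $\Phi_-$ by direct computation, exploiting that $\Phi^s$ has only a single negative mode while $\Phi^t$ enters only through its first-order coefficient $c_1$. The sole cosmetic difference is that the paper multiplies $(\Phi^s)^{-1}\Phi^t$ on the right by $\Phi_-$ and shows the result is $\id + O(\lambda)$, whereas you work with the inverse expression $\Phi_-^{-1}(\Phi^t)^{-1}\Phi^s$ and check that the $\lambda^{-2}$ and $\lambda^{-1}$ coefficients vanish; these are the same computation read from opposite ends.
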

\begin{proof}
 From \eqref{eq:Iwasawaphist}, the map $\hat F$ can be computed as
 \[
  {\Phi^s}^{-1} \Phi^t = \hat V_{+}^{-1} \hat V_-
 \]
 by the Birkhoff decomposition of $\Phi_s^{-1} \Phi^t$ and set 
 $\hat F = \Phi^t \hat V_-^{-1} = \Phi^s \hat V_+^{-1}$. 
 We then multiply $\Phi_-$ on $ {\Phi^s}^{-1} \Phi^t $
 by right, and a straightforward computation shows that 
\begin{align*}
  {\Phi^s}^{-1} \Phi^t \Phi_-&=  \begin{pmatrix}1 &  \frac14 \l^{-1}s\\ 0 & 1 \\ 
	 \end{pmatrix}\left(\id + \sum_{k \geq 1} \l^k 
 \begin{pmatrix} a_k & b_k \\ c_k & d_k \end{pmatrix}\right)
\Phi_- \\
 & = \left\{\begin{pmatrix} 1 + \frac14 sc_1 &  \frac14 \l^{-1} s \\
 0 & 1
     \end{pmatrix} 
 + O(\l)\right\} 
 \begin{pmatrix} 
 \frac1{1+  \frac14 s c_1} &- \frac14\l^{-1}  s   \\ 0&  1 + \frac14 s c_1 
 \end{pmatrix} \\
 & = \id + O(\l)
\end{align*}
 holds.
 Therefore ${\Phi^s}^{-1} \Phi^t \Phi_- \in \LSLRP$ with identity at $\lambda =0$, and
 ${\Phi^s}^{-1} \Phi^t  = \hat V_+^{-1} \Phi_-^{-1}$ is the Birkhoff decomposition.
 This completes the proof.
\end{proof}
 Plugging the $F^{\mu}$ into $f_{\Min}$ in \eqref{eq:SymMin}, we obtain 
\[
 f_{\Min} = \left\{\gamma(t)+ q(s, t) B(t)\right\} \ell 
 + \left\{\gamma(t)+ q(s, t) B(t)\right\}^* \bar \ell,
\]
 where $A^*= - \sigma_3 \overline{A(1/ \bar \mu)}^T\sigma_3$
 for $A \in \lslR$ and
\begin{align*}
 \gamma(t)&= -\ip \mu (\partial_\mu \Phi^{t})(\Phi^{t})^{-1}
 - \frac \ip2 \ad (\Phi^t) \sigma_3 ,\\
 B(t)&= -\ip \mu \ad \Phi^t \begin{pmatrix}0 & 1 \\ 0 & 0 \end{pmatrix},
 \\
 q(s, t) &= \frac{s}{2 (1+ \frac1{16} s t)}.
\end{align*}
 Under the new coordinates $(q, t)$, $f_{\Min}$ is a so-called \textit{B-scroll}, 
 that is, $\gamma$ is null curve in $\Min$ and $B$ is the bi-normal null 
 vector of $\gamma$, see in detail \cite[Section 6.2]{DIT}. 

 Further plugging the $F^{\mu}$ into $\hat f$ in \eqref{eq:symNil},
 we obtain 
\[
 \hat f = \left\{\hat \gamma(t)+ q(s, t) \hat B(t)\right\} \ell 
 + \left\{\hat \gamma(t)+ q(s, t) \hat B(t)\right\}^* \bar \ell,
\]
 where 
\[
 \hat \gamma(t) = \gamma(t)^{o} -  \frac{\ip}{2} \mu \partial_{\mu} \gamma(t)^d, 
\quad \mbox{and}\quad
 \hat B(t) = B(t)^{o} -  \frac{\ip}{2} \mu \partial_{\mu} B(t)^d.
\]
 A straightforward computation shows that $\exp( \hat \gamma(t) \ell + \hat \gamma(t)^* \bar \ell )$ is 
 null curve in $\Nil$ and $\hat B(t) \ell + \hat B(t) ^* \bar \ell $ is a bi-normal vector of $\exp( \hat \gamma(t) \ell + \hat \gamma(t)^* \bar \ell )$ 
 analogous to the Minkowski case. Therefore we call $f^{\mu}$ is 
 the \textit{B-scroll type} minimal surface in $\Nil$.
 We will investigate property of  the B-scroll type minimal surface 
 in a separate publication.

\appendix

\section{Timelike constant mean curvature surfaces in $\mathbb E^3_1$}\label{timelikemin}
 We recall the geometry of timelike surfaces in Minkowski 3-space.
 Let $\Min$ be the Minkowski 3-space with the Lorentzian metric 
\[
 \langle \cdot,\cdot \rangle
 =
 dx_1^2 - dx_2^2 +dx_3^2,
\]
 where $(x_1, x_2, x_3)$ is the canonical coordinate of $\R^3$.
 We consider a conformal immersion $\varphi : M \to \Min$
 of a Lorentz surface $M$ into $\Min$.
 Take a para-complex coordinate $z = x + \ip y$  
 and represent the induced metric by $e^u dz d\bar z$.

 Let $N$ be the unit normal vector field of $\varphi$.
 The second fundamental form $I\hspace{-1pt}I$ of $\varphi$ derived from $N$ is defined by
\[
 I\hspace{-1pt}I = -\langle d\varphi, dN \rangle.
\]
 The mean curvature $H$ of $\varphi$ is defined by
\[
 H = \frac12 {\rm tr}(I\hspace{-1pt}I \cdot I ^{-1}).
\]

 For a conformal immersion $\varphi : \D \to \Min$, 
 define para-complex valued functions $\phi_1, \phi_2, \phi_3$ by  
\[
 \varphi_z = (\phi_2, \phi_1, \phi_3).
\]
 The analogy of the discussion in Section \ref{sec:deracequation}
 shows that there exists
 $\epsilon \in \{\pm \ip \}$
 and a pair of para-complex functions $(\psi_1, \psi_2)$ such that
\[
  \phi_1 = \epsilon \left((\overline{\psi_2})^2 +(\psi_1)^2\right), \quad
 \phi_2 = \epsilon \ip \left((\overline{\psi_2})^2 -(\psi_1)^2\right), \quad
 \phi_3 =  2 \ip \psi_1 \overline{\psi_2}.
\]

 Then the conformal factor $e^u$ and the unit normal vector field $N$ of $\varphi$ can be represented as 
\[
 e^u = 4(\psi_2 \overline{\psi_2} - \psi_1 \overline{\psi_1})^2,
\]
\begin{equation}\label{normal}
 N = \frac1{\psi_2 \overline{\psi_2} - \psi_1 \overline{\psi_1}} 
  \left( 
  -\epsilon (\psi_1\psi_2 - \overline{\psi_1}\overline{\psi_2}),\,
  \epsilon \ip (\psi_1\psi_2 + \overline{\psi_1}\overline{\psi_2}),\,
  \psi_2 \overline{\psi_2} + \psi_1 \overline{\psi_1}
  \right).
\end{equation}

As well as timelike surfaces in $\Nil$, we can show that
$(\psi_1, \psi_2)$ is a solution of the nonlinear Dirac equation for a timelike surface in $\Min$:
\begin{equation}\label{diraceq}
\left(
\begin{array}{ll} 
 (\psi_2)_z + \mathcal{U} \psi_1\\
 -(\psi_1)_{\bar z} + \mathcal{V} \psi_2
\end{array}
\right)
=
\left(
\begin{array}{ll}
 0\\0
\end{array}
\right)
\end{equation}
 where $\mathcal{U} =\mathcal{V}= \ip H \hat \epsilon e^{u/2} /2$ 
 and $\hat \epsilon$ is the sign of $\psi_2 \overline{\psi_2} - \psi_1 \overline{\psi_1}$.
 Conversely, if a pair of para-complex functions $(\psi_1, \psi_2)$
 satisfying the nonlinear Dirac equation \eqref{diraceq} and
 $\psi_2 \overline{\psi_2} - \psi_1 \overline{\psi_1} \neq0$ is given,
 there exists a conformal timelike surface in $\Min$
 with the conformal factor
 $e^u = 4(\psi_2 \overline{\psi_2} - \psi_1 \overline{\psi_1})^2$.
\begin{Theorem}\label{minws}
 Let $\D$ be a simply connected domain in $\C$,
 $\mathcal U$ a purely imaginary valued function
 and the vector $(\psi_1, \psi_2)$ a solution of the nonlinear Dirac equation \eqref{diraceq}
 satisfying $\psi_2 \overline{\psi_2} - \psi_1 \overline{\psi_1} \neq0$.
 Take points $z_0 \in \D$ and $f(z_0) \in \Min$,
 set $\epsilon$ as either $\ip$ or $-\ip$
 and define a map $\Phi$ by
\[
 \Phi = \left( \epsilon \ip \left( (\overline{\psi_2})^2 - (\psi_1)^2\right),\,
 \epsilon \left( (\overline{\psi_2})^2 + (\psi_1)^2\right), \,
 2 \ip \psi_1 \overline{\psi_2} \right).
\]
 Then the map $f : \D \to \Min$ defined by
\begin{equation}\label{eq:minws}
 f(z) := f(z_0) + 
 \Re \left(
 \int_{z_0}^z \Phi dz
 \right)
\end{equation}
 describes a timelike surface in $\Min$. 
\end{Theorem}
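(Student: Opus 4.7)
The plan is to verify, in order, (i) closedness of the real $1$-form $\omega := \Re(\Phi\,dz)$ on $\D$, so that the integral defining $f$ is path-independent; (ii) conformality of $f$ for the Minkowski metric; and (iii) positivity of the conformal factor, which makes the induced metric Lorentzian. Together these three points say precisely that $f$ is a conformal timelike immersion into $\Min$.

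For step (i), a componentwise computation gives $d\,\Re(\phi_k\,dz) = \tfrac{1}{2}\bigl(\overline{\phi_k}_z - (\phi_k)_{\bar z}\bigr)\,dz\wedge d\bar z$, so closedness amounts to showing that each $(\phi_k)_{\bar z}$ is real-valued. Differentiating $\phi_1 = \epsilon\bigl((\overline{\psi_2})^2 + \psi_1^2\bigr)$ and substituting the Dirac equations $(\psi_1)_{\bar z} = \mathcal V\psi_2$ and $(\overline{\psi_2})_{\bar z} = -\overline{\mathcal U}\,\overline{\psi_1}$, together with $\mathcal U = \mathcal V$ and the purely imaginary hypothesis $\overline{\mathcal U} = -\mathcal U$, gives
\[
(\phi_1)_{\bar z} = 2\epsilon\mathcal U\bigl(\psi_1\psi_2 + \overline{\psi_1\psi_2}\bigr),
\]
which is real because $\epsilon \in \{\pm\ip\}$ and $\mathcal U$ are both purely imaginary (so their product is real), while $\psi_1\psi_2 + \overline{\psi_1\psi_2}$ is manifestly real. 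The same pattern, with the same cancellations, handles $\phi_2$ and $\phi_3$. Simple connectedness of $\D$ then yields a unique $f : \D \to \Min$ with $df = \omega$ and the prescribed base value, so that $f_z = \tfrac12\Phi$ and $f_{\bar z} = \tfrac12\overline{\Phi}$.

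Steps (ii) and (iii) are algebraic identities. Using $\ip^2 = \epsilon^2 = 1$,
\[
4\,\langle f_z, f_z\rangle_{\Min} = \phi_2^2 - \phi_1^2 + \phi_3^2 = \bigl((\overline{\psi_2})^2 - \psi_1^2\bigr)^2 - \bigl((\overline{\psi_2})^2 + \psi_1^2\bigr)^2 + 4\psi_1^2(\overline{\psi_2})^2 = 0,
\]
which proves conformality. The analogous Hermitian-type expansion yields
\[
4\,\langle f_z, f_{\bar z}\rangle_{\Min} = 2\bigl(\psi_2\overline{\psi_2} - \psi_1\overline{\psi_1}\bigr)^2,
\]
and since each $\psi_k\overline{\psi_k}$ is a real number, the difference $\psi_2\overline{\psi_2} - \psi_1\overline{\psi_1}$ is real and, by hypothesis, nowhere zero, so its square is strictly positive. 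The induced metric $I = (\psi_2\overline{\psi_2} - \psi_1\overline{\psi_1})^2\,dz\,d\bar z$ is therefore Lorentzian in the para-complex coordinates $z = x + y\ip$, and $f$ is the required timelike immersion into $\Min$.

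The only genuinely delicate point is step (i): the cancellations that force $(\phi_k)_{\bar z}$ to be real-valued depend essentially on the interplay between $\mathcal U = \mathcal V$ being purely imaginary and $\epsilon \in \{\pm\ip\}$. Once these structural hypotheses are deployed correctly, the remaining conformality and Lorentz-signature verifications reduce to the algebraic identities already underlying the formula $e^u = 4(\psi_2\overline{\psi_2} - \psi_1\overline{\psi_1})^2$ and the representation \eqref{normal} of the unit normal recalled earlier in the appendix.
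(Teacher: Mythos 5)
Your proposal is correct and follows essentially the same route as the paper's proof: closedness of $\Re(\Phi\,dz)$ (which the paper leaves as a "straightforward computation" and you rightly reduce to $(\phi_k)_{\bar z}$ being real via the Dirac equation and the purely imaginary hypothesis on $\mathcal U$), well-definedness on the simply connected domain, and then the two algebraic identities $\phi_2^2-\phi_1^2+\phi_3^2=0$ and $\phi_2\overline{\phi_2}-\phi_1\overline{\phi_1}+\phi_3\overline{\phi_3}=2(\psi_2\overline{\psi_2}-\psi_1\overline{\psi_1})^2>0$ giving conformality and the Lorentzian signature. The only difference is an immaterial normalization ($f_z=\tfrac12\Phi$ versus the paper's $f_z=\Phi$), which rescales the conformal factor but does not affect the conclusion.
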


\begin{proof}
 A straightforward computation shows that the $1$-form $\Phi dz + \overline{\Phi} d\bar{z}$ is a closed form.
 Then Green's theorem implies that $f(z)$ is well-defined.
 Thus we have $f_z = \Phi$.
 By setting $\phi_k \:(k=1,2,3)$ as $\Phi = (\phi_2, \phi_1, \phi_3)$,
 we derive $\phi_2^2 -\phi_1^2 +\phi_3^2 =0$
 and $\phi_2 \overline{\phi_2} -\phi_1 \overline{\phi_1} + \phi_3 \overline{\phi_3}
 = 2(\psi_2 \overline{\psi_2} -\psi_1 \overline{\psi_1})^2$.
 This means that $f$ is conformal, and then timelike.
\end{proof}
\begin{Remark}
 The timelike surface defined in Theorem\ref{minws} is conformal with respect to the coordinate $z$.
 Denoting the mean curvature by $H$ 
 and the conformal factor by $e^u$ 
 then we have $\mathcal{U} =\ip H \hat \epsilon e^{u/2} /2$
 where $\hat \epsilon $ is the sign of $\psi_2 \overline{\psi_2} - \psi_1 \overline{\psi_1}$.
\end{Remark}
 Obviously, the Dirac equation for timelike minimal surfaces in $(\Nil, ds_-^2)$ 
 coincides the one for timelike constant mean curvature $H=1/2$ surfaces in $\Min$.
 Combining the identification of $\isu$ with $\Min$ and \eqref{eq:f_z},
 we can show that
 the corresponding timelike constant mean curvature $1/2$ surfaces
 for timelike minimal surfaces $f^{\mu}$ in $(\Nil, ds_-^2)$ are given by $f_{\Min}$ up to translations
 and represented in the form of
 \eqref{eq:minws}.
 It is easy to see that the unit normal vector field \eqref{normal}
 of the timelike surface $f_{\Min}$ can be written as $N_{\Min}$ by the identification of $\isu$ and $\Min$.  

\section{Without para-complex coordinates}\label{sc:nopara}
 As we have explained in Example \ref{sbsc:Bscroll}, the normalized 
 potential $\xi$ which is a $1$-form taking values in $\lslc$ 
 can be translated to the pair of two real potentials which is
 a pair of $1$-forms taking values in $\lslR \times \lslR$. 
 It can be generalized to 
 any normalized potential $\xi$ any pair of two real potentials 
 $(\xi^s, \xi^t)$ as follows:
 For a normalized potential 
\[
 \xi = \mu^{-1} \begin{pmatrix}
 0 & -\frac{\ip}4 b(z) \\ 4 \ip \frac{B(z)}{b(z)} & 0
\end{pmatrix} dz,
\]
 where  $b(z)= h^2(z, 0)h^{-1}(0, 0)$, one can define a pair of $1$-forms
 by $\xi = \xi^s \ell + {\xi^t}^* \bar \ell$ such that
 \[
  \xi^s = \lambda^{-1} \begin{pmatrix} 0 & - \frac14 f(s) \\Q (s)/f(s) &0  \end{pmatrix}
  ds,
  \quad
  \xi^t = \lambda \begin{pmatrix} 0 &- R (t)/g(t) \\ \frac14 g(t) & 0 \end{pmatrix}
  dt,
 \]
 where para-complex coordinates $z = x + \ip y$ define null 
 coordinates $(s, t)$ by $x = s+ t$ and $y = s- t$, and
 the functions $f(s)$ and $g(t)$ are given by 
 \[
 f(s) = \Re b(s) + \Im b(s), \quad 
 g(t) = \Re b(t) - \Im b(t),
 \]
 and the functions $Q(s)$ and $R(t)$ are given by 
\begin{equation}\label{eq:QR}
 Q(s) = 4(\operatorname{Re} B(s)  + \operatorname{Im}B(s) ), \quad 
R(t) = 4( \operatorname{Re} B(t)  - \operatorname{Im}B(t) ).
\end{equation}
 Note that we use relations 
 $b(z) = f(s) \ell + g(t) \bar \ell$, and 
 $4 B(z) = Q(s) \ell + R(t) \bar \ell$
with $\ell =\frac{1+\ip}2$
 and $1/(f(s) \ell + g(t)\bar \ell) = \ell /f(s) + \bar \ell/ g(t)$.

 Again that the para-holomorphic solution $\Phi$ taking values in $\LSLC$ 
 of $d \Phi = \Phi \xi$ with $\Phi(0) =\id$ can 
 be identified with the pair $(\Phi^s, \Phi^t)$ by 
\[
 \Phi= \Phi^s\ell+  {\Phi^t}^* \bar \ell,
\]
 where $\Phi^s = \Phi^s(\mu)$ and 
 ${\Phi^t}^* = \sigma_3 \overline{\Phi^t(1/ \bar\mu)}^{T -1}\sigma_3$.
 Thus using the partial differentiations with respect to $s$ and $t$ by
\[
 \partial_s = \ell \partial_z + \bar \ell \partial_{\bar z}
 \quad {\rm and} \quad
 \partial_t = \bar \ell \partial_z + \ell \partial_{\bar z},
\]
 we need to consider the pair of ODEs 
\[
 \partial_s \Phi^s = \Phi^s \xi^s,  \quad \partial_t \Phi^t = \Phi^t \xi^t, 
\]
 with the initial condition $(\Phi^s(0), \Phi^t(0)) = (\id, \id)$.
 The Iwasawa decomposition of $\Phi$, that is $\Phi = F^{\mu} V_+$, can 
 be again translated to 
\[
 (\Phi^s, \Phi^t) = (\hat F, \hat F)(\hat V_+, \hat V_-).
\]
 Again note that $F^{\mu} =  \hat F \ell+  {\hat F}^* \bar \ell$
 and accordingly the Maurer-Cartan form $\alpha^{\mu}$ of $F^{\mu}$
 taking values in $\lisu$
 can be translated to $\alpha^{\mu} = \hat \alpha \ell + \hat \alpha^{*} \bar \ell$, 
 where 
\begin{equation}\label{eq:alphalambda2}
\hat  \alpha = \hat U ds + \hat V dt\quad \mbox{with}\quad
 \partial_s \hat F = \hat F \hat U, 
 \quad  
 \partial_t \hat F = \hat F \hat V.
\end{equation}
 Note that $ \hat \alpha = \hat \alpha(\mu )$ 
 and $\hat \alpha^{*} = - \sigma_3 \overline{\hat \alpha(1 /\bar \mu)}^T \sigma_3$.
 Then a straightforward computation shows that 
\begin{align}\label{eq:hatUV}
 \hat U =  
\begin{pmatrix}
 \frac12 (\log \hat h)_{s} &  - \frac14\lambda^{-1}  \hat h \\  
  \lambda^{-1} Q (s)\hat h^{-1}& - \frac12 (\log \hat h)_{s}
\end{pmatrix},\quad 
\hat V  = 
\begin{pmatrix}
 -\frac12(\log \hat h)_{t} & -\lambda R(t)\hat h^{-1}\\ 
 \frac14\lambda \hat h & \frac12(\log \hat h)_t
\end{pmatrix}
\end{align}
 hold, where for a angle function $h= h(z, \bar z)$, $\hat h$ is defined by 
 $\hat h(s, t) = \Re h(s, t) + \Im h(s, t)$, and $F^{\mu}$ has the Maurer-Caran form 
 in \eqref{eq:UVmu}.

Note that  when we consider that $\alpha$ takes values in $\isu$, the spectral parameter takes 
\[
\mu = e^{\ip \theta}  = \cosh(\theta) + \ip \sinh(\theta) \in 
 \S_1^1\quad 
 ( \theta \in \R).
\]
 Then the corresponding spectral parameter $\lambda$ is given by
\[
 \lambda = e^{\theta}= \cosh(\theta) + \sinh(\theta) \in \R^+. 
\]
 We would like to note that,
 in \cite{DIT}, the null coordinate is used.
 Moreover, the spectral parameter $\lambda$ is replaced by $\lambda^{-1}$,
 and then $\hat U$ (resp. $\hat V$) in this paper
 plays a role of
 $U(\lambda^{-1})$ (resp. $V(\lambda^{-1})$) in \cite[Section 5]{DIT}.

 \textbf{Acknowledgement:}  We would  like to thank the anonymous referee for 
 carefully reading the manuscript and for pointing out to 
 us a number of typographical errors and for giving good suggestions.

\bibliographystyle{plain}

\begin{thebibliography}{10}
\bibitem{Abresch-Rosenberg:Acta}
 U.~Abresch, H.~Rosenberg.
 \newblock A Hopf differential for constant mean curvature surfaces 
 in ${\bf S}^2\times{\bf R}$ and ${\bf H}^2\times{\bf R}$.
 \newblock {\em Acta Math.} \textbf{193}(2): 141--174, 2004.

\bibitem{Abresch-Rosenberg}
 U.~Abresch, H.~Rosenberg.
 \newblock Generalized {H}opf differentials.
 \newblock {\em Mat. Contemp.} \textbf{28}: 1--28, 2005.
%

\bibitem{Ber:Heisenberg}
 D.~A. Berdinski{\u\i}.
 \newblock Surfaces of constant mean curvature in the {H}eisenberg group (Russian). 
 \newblock {\em Mat. Tr.} \textbf{13}(2): 3--9, 2010.
 \newblock English translation: {\em Siberian Adv. Math.} \textbf{22}(2): 75--79, 2012.

\bibitem{BT:Sur-Lie}
 D.~A. Berdinski{\u\i}, I.~A. Ta{\u\i}manov.
 \newblock Surfaces in three-dimensional {L}ie groups  (Russian).
 \newblock {\em Sibirsk. Mat. Zh.} \textbf{46}(6): 1248--1264, 2005.
 \newblock English translation: {\em Siberian Math. J.} \textbf{46}(6): 1005--1019, 
 2005.


\bibitem{BS:CMC}
 D.~Brander, M.~Svensson.
 \newblock Timelike constant mean curvature surfaces with singularities.
 {\em J. Geom. Anal.} \textbf{24}(3): 1641--1672, 2014.


\bibitem{BS:Cauchy}
 D.~Brander, M.~Svensson.
 \newblock The geometric Cauchy problem for surfaces with Lorentzian harmonic Gauss maps.
 {\em J. Differential Geom.} \textbf{93}(1): 37--66, 2013.

\bibitem{CDM}
 R.~M.~B.~Chaves, M.~P.~Dussan, M.~Magid.
 \newblock Bj{\"o}rling problem for timelike surfaces in the Lorentz-Minkowski space.
 \newblock {\em J. Math. Anal. Appl.} \textbf{377}(2): 481--494, 2011.


\bibitem{CMO:Bjorling}
 A.~A.~Cintra, F.~Mercuri, I.~I.~Onnis.
 \newblock The Bjorling problem for minimal surfaces in a Lorentzian three-dimensional Lie group.
 \newblock {\em Ann. Mat. Pura Appl.} \textbf{195}(1): 95--110, 2016.

\bibitem{CMO:Minimal}
 A.~A.~Cintra, F.~Mercuri, I.~I.~Onnis.
 \newblock Minimal surfaces in Lorentzian Heisenberg group and Damek-Ricci spaces via the Weierstrass representation.
 \newblock {\em J. Geom. Phys.} \textbf{121}: 396--412, 2017.

\bibitem{CO}
 A.~A.~Cintra, I.~I.~Onnis.
 \newblock Enneper representation of minimal surfaces in the three-dimensional Lorentz-Minkowski space.
 \newblock{\em Ann. Mat. Pura Appl.} \textbf{197}(1): 21--39, 2018.

%
%
%

\bibitem{DIT}
 J.~Dorfmeister, J.~Inoguchi, M.~Toda.
 \newblock Weierstra\ss-type representation of timelike surfaces with constant mean curvature.
\newblock in: {\em Differential geometry and integrable systems (Tokyo, 2000)}, 
Contemp. Math. \textbf{308}: 77--99, 2002.

\bibitem{DIKAsian}
 J.~F.~Dorfmeister, J.~Inoguchi, S.-P. Kobayashi.
 \newblock A loop group method for minimal surfaces in the 
 three-dimensional Heisenberg group.
 \newblock {\em Asian J. Math.} \textbf{20}(3): 409--448, 2016.

\bibitem{DIKtop}
 J.~F.~Dorfmeister, J.~Inoguchi, S.-P. Kobayashi.
 \newblock Minimal surfaces with non-trivial topology in the three-dimensional Heisenberg
 group. 
 \newblock Arxiv:1903.00795, 2019.
%
%
%

\bibitem{DPW}
 J.~Dorfmeister, F.~Pedit, H.~Wu.
 \newblock Weierstrass type representation of harmonic maps into symmetric spaces.
 \newblock {\em Comm. Anal. Geom.} \textbf{6}(4): 633--668, 1998.

\bibitem{Fer-Mira2}
 I.~Fernandez, P.~Mira.
 \newblock Holomorphic quadratic differentials and the Bernstein problem in Heisenberg space.
 \newblock{\em Trans. Amer. Math. Soc.} \textbf{361}(11): 5737--5752, 2009.

\bibitem{Inoguchi}
 J.~Inoguchi.
\newblock Timelike surfaces of constant mean curvature in Minkowski 3-space. 
\newblock {\em Tokyo J. Math.} \textbf{21}(1):  141--152, 1998.

\bibitem{IKOS}
 J.~Inoguchi, T.~Kumamoto, N.~Ohsugi, Y.~Suyama.
 \newblock Differential geometry of curves and 
 surfaces in 3-dimensional homogeneous spaces I, II.
 \newblock {\em Fukuoka Univ. Sci. Rep.} \textbf{29}(2): 155--182, 1999, \textbf{30}(1): 17--47, 2000.
%

\bibitem{IT}
J.~Inoguchi, M.~Toda.
\newblock Timelike minimal surfaces via loop groups.
\newblock {\em Acta Appl. Math.} \textbf{83}(3): 313--355, 2004.

%

\bibitem{Kondreak}
J.~J.~Konderak.
\newblock A Weierstrass representation theorem for Lorentz surfaces. 
\newblock {\em Complex Var. Theory Appl.} \textbf{50}(5):319--332,  2005.

\bibitem{L:Lorentz} 
M.~A.~Lawn.
\newblock Immersions of Lorentzian surfaces in {$\mathbb R^{2,1}$}.
\newblock {\em J. Geom. Physc.} \textbf{58}(6): 683--700, 2008.

\bibitem{LMM}
J.~H.~Lira, M.~Melo, F.~Mercuri.
\newblock A Weierstrass representation for minimal surfaces in 3-dimensional manifolds.
\newblock {\em Results Math.} \textbf{60}(1): 311--323, 2011.


\bibitem{Magid}
M.~A. Magid.
\newblock Timelike surfaces in Lorentz 3-space with prescribed mean curvature and Gauss map. 
\newblock {\em Hokkaido Math. J.} \textbf{20}(3): 447--464, 1991.


\bibitem{Melko-Sterling}
M.~Melko, I.~Sterling.
\newblock Application of soliton theory to the construction of 
 pseudospherical surfaces in ${\bf R^3}$.
\newblock {\em Ann. Global Anal. Geom.} \textbf{11}(1): 65--107, 1993.

\bibitem{PreS:LoopGroup}
 A.~Pressley, G.~Segal.
\newblock {\em Loop groups.}
\newblock Oxford Mathematical Monographs. The Clarendon Press Oxford University Press, New York, 1986.
\newblock Oxford Science Publications.

\bibitem{Rah}
S.~Rahmani, 
\newblock M\'etriques de Lorentz sur les groupes de Lie unimodulaires, 
 de dimension trois, [Lorentz metrics on three-dimensional unimodular Lie groups].
\newblock {\em J. Geom. Phys.} \textbf{9}(3): 295--302, 1992.

\bibitem{SSP}
 B.~A.~Shipman, P.~D.~Shipman, D.~Packard.
 \newblock Generalized Weierstrass-Enneper representations of Euclidean, spacelike, and timelike surfaces: a unified Lie-algebraic formulation.
 \newblock {\em J. Geom.} \textbf{108}(2): 545--563, 2017.

\bibitem{Thurston}
 W.~M.~Thurston (S.~Levy ed.).
 \newblock {\em Three-dimensional geometry and topology}. Vol.~1,
 Princeton Math. Series, Vol.~35, Princeton Univ. Press, 1997.

\bibitem{W:Lorentz}
T.~Weinstein.
\newblock {\em An introduction to Lorentz surfaces}.
De Gruyter Expositions in Mathematics, 22. Walter de Gruyter \& Co., Berlin, 1996. 
\end{thebibliography}
\def\cprime{$'$}

\end{document}